\documentclass{amsart}
\usepackage{amssymb}
\usepackage{amsmath}
\usepackage{amsthm}
\usepackage{mathrsfs}
\usepackage{colortbl}
\usepackage{graphicx}
\newtheorem{theorem}{Theorem}
\newtheorem{lemma}[theorem]{Lemma}

\newtheorem{remark}[theorem]{Remark}

\theoremstyle{definition}

\newtheorem{proposition}[theorem]{Proposition}

\makeatletter

\@addtoreset{theorem}{section}
\makeatother

\makeatletter

\@addtoreset{equation}{section}
\makeatother

\begin{document}                                                 

\title[Stability of the diffusion equation on unbounded surfaces]{Stability and maximal regularity of the diffusion equation on unbounded surfaces}                                
\author[Hajime Koba]{Hajime Koba}                                
\address{Faculty of Advanced Science and Technology, Kumamoto University, 2-39-1 Kurokami, Chuo-ku, Kumamoto, 860-8555, Japan}                            
%\curraddr{...}                                   
\email{koba-hajime@kumamoto-u.ac.jp}

%\date{}                                      
%\thanks{This work was partly supported by the Japan Society for the Promotion of Science (JSPS) KAKENHI Grant Number JP21K03326.}                                     
%\translator{...}
\keywords{Surface diffusion; Diffusion equation; Stability; Maximal regularity; Laplace-Beltrami operator}                       
\subjclass[]{35B35, 35A01, 35R01, 47D06, 53A05}

\begin{abstract}
This paper studies both stability and maximal regularity of the diffusion equation on an unbounded surface without any assumptions on the curvature. We show that the surface diffusion system admits a unique global-in-time strong solution satisfying $L^2$-asymptotic stability when the slope of the surface is gentle. We also derive maximal $L^2$-regularity of the solution when the exterior force satisfies both surface divergence form and tangential condition. Moreover, we investigate the H\"{o}lder continuity with respect to time of the solution. The key idea of showing our stability and maximal regularity results is to apply nice properties of both Laplace  and weighted Laplace-Beltrami operators in order to derive the properties of the Laplace-Beltrami operator on the unbounded surface.
\end{abstract}

\maketitle

\section{Introduction}\label{sect1}

We are interested in both stability and maximal regularity of the diffusion equation on an unbounded surface without any assumptions on the curvature. The system is one of the simple diffusion models for the Earth's surface. This paper has three purposes. The first one is to study the properties of the Laplace-Beltrami operator on an unbounded surface. The second one is to show the existence of a global-in-time strong solution to the diffusion system on the unbounded surface. The third one is to investigate $L^2$-asymptotic stability, maximal $L^2$-regularity, and the H\"{o}lder continuity of the solution.

Let us first introduce basic notation. We use the characters $i,j$ as $1,2,3$, and the characters $\alpha, \beta,\alpha', \beta'$ as $1,2$, that is, $i , j \in \{ 1,2,3 \}$ and $\alpha, \beta, \alpha',\beta' \in \{ 1,2 \}$. Let $x = { }^t (x_1,x_2,x_3) \in \mathbb{R}^3$, $X = { }^t (X_1,X_2) \in \mathbb{R}^2$ be the spatial variables, $t , \tau, s \geq 0$ be the time variables. The symbol $\nabla$ and $\nabla_X$ are two gradient operators defined by $\nabla = { }^t (\partial_1 , \partial_2, \partial_3)$ and $\nabla_X = { }^t (\partial_{X_1}, \partial_{X_2})$, where $\partial_j = \partial/{\partial x_j}$ and $\partial_{X_\alpha} = \partial/{\partial X_\alpha}$. Write $\Delta_X = \partial_{X_1}^2 + \partial_{X_2}^2$ and $\partial_t = \partial/{\partial t}$. For $1 \leq p \leq \infty$ and $m \in \mathbb{N}$, the symbols $L^p(\mathbb{R}^2)$ and $W^{m,p}(\mathbb{R}^2)$ denote the usual Lebesgue and Sobolev spaces, and $\Vert \cdot \Vert_{L^p(\mathbb{R}^2)}$ and $\Vert \cdot \Vert_{W^{m,p} ( \mathbb{R}^2)}$ denote its norms. Let $\left< \cdot , \cdot \right>_{L^2(\mathbb{R}^2)}$ be the $L^2$-inner product on $L^2(\mathbb{R}^2)$. The symbol $d \mathcal{H}_x^2$ denotes the $2$-dimensional Hausdorff measure. For a linear operator $\mathcal{A}$ on a Banach space, the symbols $\mathcal{R} (\mathcal{A})$, $\mathcal{N}(\mathcal{A})$, $\rho ( \mathcal{A})$, and ${\rm{e}}^{ t \mathcal{A} }$ denote the range of $\mathcal{A}$, the null set of $\mathcal{A}$, the resolvent set of $\mathcal{A}$, and the semigroup generated by $\mathcal{A}$.

This paper considers both stability and maximal regularity of strong solutions to the \emph{diffusion equation on an unbounded surface} $\Gamma$:
\begin{equation}\label{eq11}
\begin{cases}
\partial_t u - \kappa \Delta_\Gamma u = \mathcal{F} &\text{ on }\Gamma \times (0,\infty ),\\
u \vert_{t =0} = u_0 & \text{ on }\Gamma,\\
{\displaystyle{ \lim_{x \in \Gamma, { \ } \vert (x_1,x_2) \vert \to \infty} u = 0}} &\text{ on } (0,\infty ),
\end{cases}
\end{equation}
where the unknown function $u = u(x,t)$ is the \emph{concentration of amount of a substance} on $\Gamma$, while the given positive number $\kappa$ is the \emph{diffusion coefficient}, the given function $\mathcal{F} = \mathcal{F} (x,t)$ is the external force, $u_0 = u_0(x)$ is the given initial datum, and the surface $\Gamma$ is represented by
\begin{equation}\label{eq12}
\Gamma = \{ x \in \mathbb{R}^3;{ \ }x_1 = X_1, x_2=X_2,x_3 = \varphi (X_1,X_2), { \ }(X_1,X_2) \in \mathbb{R}^2 \}
\end{equation}
for some $\varphi \in BC^2 (\mathbb{R}^2) $. Here $\Delta_\Gamma$ denotes the \emph{Laplace-Beltrami operator} defined by
\begin{equation*}
\Delta_\Gamma \psi  = \sum_{j=1}^3 (\partial_j^\Gamma)^2 \psi { \ }\text{ and }{ \ }\partial_j^\Gamma \psi = \sum_{i=1}^3 (\delta_{ij} - n_j n_i) \partial_i \psi = \partial_j \psi - n_j (n \cdot \nabla ) \psi,
\end{equation*}
where $\delta_{ij}$ is the Kronecker delta, and $n= n(x) = { }^t (n_1,n_2,n_3)$ is the unit outer normal vector at $x \in \Gamma$. Note that $\Delta_\Gamma \psi = \nabla_\Gamma \cdot \nabla_\Gamma \psi$, where $\nabla_\Gamma = { }^t (\partial_1^\Gamma , \partial_2^\Gamma , \partial_3^\Gamma)$. See Section \ref{sect2} for differential operators $\Delta_\Gamma$ and $\partial^\Gamma_j$.

Let us state mathematical analysis of diffusion systems on a manifold or a fixed surface. Jimbo \cite{Jim84} considered the stable equilibrium solutions of the semilinear diffusion equation on a compact Riemannian manifold with a non-negative Ricci curvature. They showed that any non-constant equilibrium solution of their diffusion system is unstable. Davies \cite[Chapter 5]{Dav89} dealt with the heat kernel of a complete Riemannian manifold. Under the conditions that the Ricci curvature of the manifold is bounded below by a negative constant, they derived fundamental properties of the heat semigroup. Mazzucato-Nistor \cite{MN06} studied maximal $L^p$-$L^q(1<p,q<\infty)$ regularity of the heat kernel of noncompact manifolds. They showed that system \eqref{eq11} admits a unique local-in-time strong $L^q$-solution when $u_0$ is in a real interpolation space and $F$ satisfies that $F = {\rm{div}}_\Gamma G(u)$, where $G (\cdot)$ is a locally Lipschitz map. Shao-Simonett \cite{SS14} investigated the continuous maximal regularity of their elliptic operators on uniformly regular Riemannian manifolds without boundary. They applied their theory to derive the analyticity of the solutions to the diffusion system on their manifold. Bandle-Monticelli-Punzo \cite{BMP18} studied the stability of time-periodic solutions to their reaction-diffusion system on a manifold. This paper studies both stability and maximal regularity of the diffusion equation on an unbounded surface without any assumptions on the curvature. In \cite{Jim84} they considered the stability of solutions to the diffusion system on a compact manifold, while this paper investigates it on unbounded surfaces. In \cite{MN06} they studied a local-in-time maximal $L^p$-$L^q$ regularity of the heat kernel of noncompact manifolds, while this paper investigates a global-in-time maximal $L^2$-regularity of the heat semigroup of unbounded surfaces.

Recently, some researchers have been studying their diffusion systems on an evolving surface. Dziuk-Elliott \cite{DE07} used a Galerkin-type method to show the existence of a unique weak solution to the diffusion system on an evolving closed surface. Alphonse-Elliott-Stinner \cite{AES15a}, \cite{AES15b} introduced their evolving Hilbert space to construct a weak solution of several PDEs on evolving surfaces. Djurdjevac \cite{Dju17} and Djurdjevac-Elliott-Kornhuber-Ranner \cite{DEKR18} studied the diffusion system with random coefficients on an evolving closed surface. Djurdjevac \cite{Dju17} showed the existence of a unique mean-weak solution to their surface diffusion system. In \cite{DEKR18}, they investigated the stability property of solutions to their semi-discrete problem of their surface diffusion system. Koba \cite{Kob22} studied the existence of a global-in-time strong solution to their advection-diffusion equation on an evolving surface with a boundary.

We briefly state the main results of this paper; see Theorems \ref{thm25}-\ref{thm27} for details. 
\begin{theorem}\label{thm11}
Let $\kappa >0$ and $\varphi \in BC^2(\mathbb{R}^2)$. Then there is $\delta_* = \delta_* (\kappa) >0$ such that if
\begin{equation*}
\Vert \nabla_X \varphi \Vert_{L^\infty (\mathbb{R}^2)} \leq \delta_*
\end{equation*}
then for each $0< \eta <1$,
\begin{equation*}
u_0 \in W^{1,2} (\Gamma) \text{ and } \mathcal{F} \in C_{loc}^{\eta}((0,\infty);L^2(\Gamma)) \cap L^2(0,\infty ; L^2(\Gamma)),
\end{equation*}
system \eqref{eq11} admits a unique global-in-time strong solution $u$ in
\begin{equation*}
C([0,\infty); L^2 (\Gamma)) \cap C((0,\infty);W^{2,2}(\Gamma)) \cap C^1((0,\infty);L^2(\Gamma)).
\end{equation*}
Moreover, the solution $u$ satisfies the five properties:\\
\noindent $(\rm{i})$ $[\rm{Energy { \ }equality}]$ For all $0 \leq s < t< \infty$,
\begin{multline*}
\Vert u(t) \Vert_{L^2(\Gamma )}^2 + 2 \kappa \int_s^t \Vert \nabla_\Gamma u (\tau ) \Vert_{L^2(\Gamma)}^2 { \ }d \tau\\
 = \Vert u(s ) \Vert_{L^2(\Gamma )}^2 + \int_s^t \int_{\Gamma} \mathcal{F}(\tau) u(\tau)  { \ }d \mathcal{H}_x^2 d \tau .
\end{multline*}
\noindent $(\rm{ii})$ $[$\rm{H\"{o}lder continuity}$]$
\begin{equation*}
u, u_t, \Delta_\Gamma u \in C^{ \eta }_{loc}((0,\infty); L^2(\Gamma)).
\end{equation*}
\noindent $(\rm{iii})$ $[\rm{Maximal { \ } regularity}]$ There is $C_1 = C_1(\kappa, \Vert \nabla_X^2 \varphi \Vert_{L^\infty (\mathbb{R}^2)} )>0$ such that for each $T>0$
\begin{multline*}
\Vert u_t \Vert_{L^2(0, T ; L^2(\Gamma))} + \Vert \kappa \Delta_\Gamma u \Vert_{L^2(0,T ; L^2(\Gamma))}\\
 \leq C_1 \Vert u_0 \Vert_{W^{1,2} (\Gamma)} + C_1(1 +\sqrt{ T}) \Vert \mathcal{F} \Vert_{L^2(0,T ; L^2(\Gamma))} .
\end{multline*}
Assume in addition that there exists $\mathcal{Q}= { }^t (\mathcal{Q}_1,\mathcal{Q}_2,\mathcal{Q}_3) \in L^2(0, \infty ; W^{1,2}( \Gamma))$ such that 
\begin{equation*}
\mathcal{F} = \nabla_\Gamma \cdot \mathcal{Q} (= \partial_1^\Gamma \mathcal{Q}_1 + \partial_2^\Gamma \mathcal{Q}_2 + \partial_3^\Gamma \mathcal{Q}_3 ),
\end{equation*}
and that
\begin{equation*}
\mathcal{Q} \cdot n = 0.
\end{equation*}
Then there is $C_2 = C_2 ( \kappa, \Vert \nabla_X^2 \varphi \Vert_{L^\infty (\mathbb{R}^2)}  ) >0$ such that
\begin{multline*}
\Vert u_t \Vert_{L^2(0,\infty ; L^2(\Gamma))} + \Vert \kappa \Delta_\Gamma u \Vert_{L^2(0,\infty ; L^2(\Gamma))}\\
 \leq C_2 \Vert u_0 \Vert_{W^{1,2} (\Gamma)} + C_2 \Vert \mathcal{Q} \Vert_{L^2(0,\infty ; W^{1,2}(\Gamma))}.
\end{multline*}
\noindent $(\rm{iv})$ $[\rm{Functions{ \ } on { \ } surfaces}]$ Assume in addition that $ \mathcal{F} \in L^2(\Gamma \times (0,\infty))$. Then for each fixed $T \in (0,\infty)$
\begin{equation*}
u \in L^2(\Gamma \times (0,T)).
\end{equation*}
\noindent $(\rm{v})$ $[\rm{Stability}]$ Assume in addition that $ \mathcal{F} \equiv 0$. Then
\begin{equation*}
\lim_{t \to \infty} \Vert u(t) \Vert_{L^2(\Gamma)} = 0.
\end{equation*}
\end{theorem}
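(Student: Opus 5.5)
We will prove Theorem \ref{thm11} by pulling the problem back to $\mathbb{R}^2$ and treating $\Delta_\Gamma$ as a perturbation of a weighted operator, then applying analytic semigroup theory on $L^2(\Gamma)$.

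\textbf{Step 1: pull back to the plane.} Write $\sqrt{g}=\sqrt{1+|\nabla_X\varphi|^2}$ and $\tilde u(X,t)=u(X,\varphi(X),t)$. The norms of $L^2(\Gamma)$ and $L^2(\mathbb{R}^2)$ are equivalent, with constants depending only on $\Vert\nabla_X\varphi\Vert_{L^\infty}$. In coordinates,
\[
\Delta_\Gamma \psi=\frac{1}{\sqrt g}\nabla_X\cdot\bigl(\sqrt g\, G^{-1}\nabla_X\tilde\psi\bigr),\qquad G^{-1}=I-\frac{\nabla_X\varphi\,{}^t\nabla_X\varphi}{g}.
\]
This gives a decomposition $\Delta_\Gamma=\Delta_X+\mathcal{B}$. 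The principal part of $\mathcal{B}$ has coefficients of size $O(|\nabla_X\varphi|^2)$. Its first-order part involves $\nabla_X^2\varphi\,\nabla_X\varphi$.

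\textbf{Step 2: properties of $A=\kappa\Delta_\Gamma$ on $L^2(\Gamma)$.} Take the domain $D(A)=W^{2,2}(\Gamma)$ and the inner product of $L^2(\Gamma)$ with measure $d\mathcal{H}^2_x=\sqrt g\,dX$. Integration by parts gives
\[
\langle -\Delta_\Gamma\psi,\psi\rangle_{L^2(\Gamma)}=\Vert\nabla_\Gamma\psi\Vert^2_{L^2(\Gamma)},
\]
so $A$ is symmetric and nonpositive. For self-adjointness we need $\mathcal{R}(\lambda-A)=L^2(\Gamma)$ for $\lambda>0$ together with an $H^2$ estimate. For this we use the Laplace operator: $\Vert\nabla_X^2 w\Vert_{L^2}=\Vert\Delta_X w\Vert_{L^2}$, which gives
\[
\Vert \mathcal{B}w\Vert\le C\delta\Vert \nabla_X^2w\Vert+C(\Vert\nabla_X^2\varphi\Vert_{L^\infty})\,\delta\,\Vert\nabla_X w\Vert .
\]
Choosing $\delta_*$ small makes $\mathcal{B}$ relatively bounded with respect to $\Delta_X$ with bound $<1$. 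Kato--Rellich, or a Neumann series for $(\lambda-\kappa\Delta_X-\kappa\mathcal{B})^{-1}$, then gives self-adjointness on $W^{2,2}$. It also gives the elliptic estimate
\[
\Vert\psi\Vert_{W^{2,2}}\le C\bigl(\Vert\Delta_\Gamma\psi\Vert+\Vert\psi\Vert\bigr).
\]
This is where the smallness of the slope and the weighted Laplace--Beltrami form are used. Consequently $A$ generates a bounded analytic contraction semigroup. We also need $\mathcal{N}(A)=\{0\}$: if $\nabla_\Gamma\psi=0$ then $\psi$ is constant, and a constant in $L^2$ on an unbounded $\Gamma$ vanishes. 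Since $A$ is self-adjoint, this gives $\overline{\mathcal{R}(A)}=L^2(\Gamma)$.

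\textbf{Step 3: existence and properties (i)--(v).} Define
\[
u(t)=e^{tA}u_0+\int_0^te^{(t-s)A}\mathcal{F}(s)\,ds .
\]
Because $\mathcal{F}$ is locally Hölder in time, the standard Pazy theory for analytic semigroups gives the strong solution in the stated class, its uniqueness, and the Hölder regularity (ii). The energy equality (i) follows from taking the inner product of the equation with $u$ and integrating by parts, which is legitimate since $u(t)\in W^{2,2}$.

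For stability (v), the spectral theorem gives $e^{tA}u_0\to0$ because $\mathcal{N}(A)=0$; no decay rate is available.

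For the maximal regularity (iii):
\begin{itemize}
\item Take the inner product of the equation with $-\Delta_\Gamma u$ to get
\[
\tfrac{d}{dt}\Vert\nabla_\Gamma u\Vert^2+\kappa\Vert\Delta_\Gamma u\Vert^2\le\kappa^{-1}\Vert\mathcal{F}\Vert^2 .
\]
Since $\Vert\nabla_\Gamma u_0\Vert\le\Vert u_0\Vert_{W^{1,2}}$, this bounds $\Delta_\Gamma u$ in $L^2_tL^2$. Then $u_t=\mathcal{F}+\kappa\Delta_\Gamma u$ bounds $u_t$. A $\sqrt T$ factor arises only where lower-order terms enter through $\Vert u\Vert\le\Vert u_0\Vert+\sqrt T\Vert\mathcal{F}\Vert_{L^2_tL^2}$.
\item For the divergence case $\mathcal{F}=\nabla_\Gamma\cdot\mathcal{Q}$ with $\mathcal{Q}\cdot n=0$, the integration by parts
\[
\int_\Gamma\mathcal{F}u\,d\mathcal{H}^2_x=-\int_\Gamma\mathcal{Q}\cdot\nabla_\Gamma u\,d\mathcal{H}^2_x
\]
has no mean-curvature term, precisely because $\mathcal{Q}$ is tangential. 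The energy equality then gives a $T$-independent bound on $\nabla_\Gamma u$ in $L^2_tL^2$. Moreover $\Vert\nabla_\Gamma\cdot\mathcal{Q}\Vert\le C\Vert\mathcal{Q}\Vert_{W^{1,2}}$, so the estimate from the first item with $T=\infty$ closes.
\end{itemize}

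Finally, (iv) follows from the uniform-in-time $L^2$ bound on $u$ on $(0,T)$.

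\textbf{Main obstacle.} The hard step is Step 2: proving surjectivity of $\lambda-\kappa\Delta_\Gamma$ together with the uniform $W^{2,2}$ estimate without any curvature assumption. It requires controlling the first-order terms involving $\nabla_X^2\varphi$, which are not small, by interpolation $\Vert\nabla w\Vert\le\epsilon\Vert\nabla^2w\Vert+C_\epsilon\Vert w\Vert$. The plan keeps the principal-part perturbation small via $\delta_*(\kappa)$ and absorbs the lower-order terms by taking $\lambda$ large.
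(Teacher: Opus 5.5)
Your proof is correct, but it takes a genuinely different route from the paper.

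\textbf{How the two routes differ.} The paper stays in the flat space $L^2(\mathbb{R}^2)$, where $L$ is not symmetric. It splits $L=A+B$ with $A=-\kappa\Delta_X$ and gets maximal regularity by perturbing the maximal-regularity constant $C_\star$ of \eqref{eq32}. This has three consequences:
\begin{enumerate}
\item It imposes the restriction $\Vert\nabla_X\varphi\Vert_{L^\infty}\le 1/(4\sqrt{C_\star})$, which is why $\delta_*$ depends on $\kappa$.
\item It needs a separate iteration to establish $Bv\in L^2(0,T;L^2)$, namely \eqref{eq52}.
\item It leaves a lower-order term $\Vert\nabla_X v\Vert_{L^2(0,T;L^2)}$. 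The energy equality controls this term only with a $\sqrt{T}$ loss, unless $\mathcal{F}=\nabla_\Gamma\cdot\mathcal{Q}$ with $\mathcal{Q}\cdot n=0$.
\end{enumerate}
Stability is then a separate argument: density of $\mathcal{R}(\mathcal{L})$ for $\mathcal{L}=\sqrt{G}L$, plus \eqref{eq28}, gives $t^{-1/2}$ decay on a dense set.

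You instead use the inner product of $L^2(\Gamma)\cong L^2(\mathbb{R}^2,\sqrt{G}\,dX)$. There $\kappa\Delta_\Gamma$ is symmetric, and symmetry plus surjectivity of $\lambda-\kappa\Delta_\Gamma$ makes it self-adjoint. This is equivalent to the paper's Proposition \ref{prop61}(iii) for $\mathcal{L}$, which the paper uses only for density of the range.

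\textbf{What your route buys.} After self-adjointness, everything is Hilbert-space routine, and your conclusions are stronger than stated:
\begin{enumerate}
\item Testing with $-\Delta_\Gamma u$ gives (iii) uniformly in $T$, with a constant depending only on $\kappa$. So the $\sqrt{T}$ disappears.
\item The divergence and tangential hypotheses on $\mathcal{Q}$ become superfluous, since $\Vert\nabla_\Gamma\cdot\mathcal{Q}\Vert_{L^2(\Gamma)}\le C\Vert\mathcal{Q}\Vert_{W^{1,2}(\Gamma)}$.
\item (v) is immediate from the spectral theorem and $\mathcal{N}(A)=\{0\}$.
\item You need no slope restriction at all. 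In the paper's notation, the principal coefficient of $\mathcal{B}$ is $-\partial_{X_\alpha}\varphi\,\partial_{X_\beta}\varphi/G$, and $|\nabla_X\varphi|^2/G\le M^2/(1+M^2)<1$ with $M=\Vert\nabla_X\varphi\Vert_{L^\infty}$. Combined with Lemma \ref{lem34}, your Neumann series closes for every $\varphi\in BC^2(\mathbb{R}^2)$ once $\lambda$ is large. The smallness condition is an artifact of perturbing $C_\star$.
\end{enumerate}
The paper's perturbative treatment of maximal regularity has one advantage. It uses only maximal regularity of $-\Delta_X$, never self-adjointness, and that is the structure that would survive outside the Hilbert-space ($L^p$) setting.

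\textbf{Three small corrections.}
\begin{enumerate}
\item Kato--Rellich does not apply literally, because $\Delta_X$ is not self-adjoint in $L^2(\Gamma)$ and $\mathcal{B}$ is not symmetric. Your alternative is the right argument: the Neumann series gives surjectivity of $\lambda-\kappa\Delta_\Gamma$, and combined with the symmetry you already have, this gives self-adjointness.
\item To integrate the $-\Delta_\Gamma u$ identity down to $t=0$, you need $\Vert\nabla_\Gamma u(\epsilon)\Vert_{L^2(\Gamma)}\to\Vert\nabla_\Gamma u_0\Vert_{L^2(\Gamma)}$. This follows from $D((-A)^{1/2})=W^{1,2}(\Gamma)$ with $\Vert(-A)^{1/2}\psi\Vert^2=\kappa\Vert\nabla_\Gamma\psi\Vert^2$, or directly from spectral calculus. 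This is the step that replaces the paper's proof of \eqref{eq52}.
\item Carrying out your computation for (i) gives $2\int_s^t\int_\Gamma\mathcal{F}u\,d\mathcal{H}^2_x\,d\tau$ on the right-hand side. The coefficient $1$ printed in the statement, and in the paper's \eqref{eq26}, is a slip. Your method proves the corrected identity, not the one as printed.
\end{enumerate}
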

\noindent See Appendix for the function spaces $L^2(\Gamma \times (0,\infty))$ and $L^2(\Gamma \times (0,T))$.

 Applying our methods, we also obtain the following results.
\begin{proposition}\label{prop12}
Let $\kappa >0$, $\varphi \in BC^2(\mathbb{R}^2)$, and let $\delta_*$ and $C_1$ be the two positive constants appearing in Theorem \ref{thm11}. If $\Vert \nabla_X \varphi \Vert_{L^\infty (\mathbb{R}^2)} \leq \delta_*$, then for each $0 < \eta <1$, $T \in (0,\infty)$, $u_0 \in W^{1,2} (\Gamma)$, and 
\begin{equation*}
\mathcal{F} \in C_{loc}^{\eta}((0,T);L^2(\Gamma)) \cap L^2(0,T ; L^2(\Gamma)) \cap L^2( \Gamma \times (0,T) ),
\end{equation*}
system \eqref{eq11} admits a unique local-in-time strong solution $u$ in
\begin{equation*}
C([0,T); L^2 (\Gamma)) \cap C((0,T);W^{2,2}(\Gamma)) \cap C^1((0,T);L^2(\Gamma)) \cap L^2(\Gamma \times (0,T)),
\end{equation*}
satisfying $u, u_t, \Delta_\Gamma u \in C^{  \eta }_{loc}((0,T); L^2(\Gamma))$ and
\begin{multline*}
\Vert u_t \Vert_{L^2(0, T ; L^2(\Gamma))} + \Vert \kappa \Delta_\Gamma u \Vert_{L^2(0,T ; L^2(\Gamma))}\\
 \leq C_1 \Vert u_0 \Vert_{W^{1,2} (\Gamma)} + C_1(1 +\sqrt{ T}) \Vert \mathcal{F} \Vert_{L^2(0,T ; L^2(\Gamma))} .
\end{multline*}
\end{proposition}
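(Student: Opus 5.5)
Proposition \ref{prop12} is the finite-horizon version of Theorem \ref{thm11}, so I would reuse the machinery behind Theorem \ref{thm11}, working on $(0,T)$ instead of $(0,\infty)$.

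\emph{Operator and semigroup.} Set $A = -\kappa\Delta_\Gamma$ on $L^2(\Gamma)$ with domain $W^{2,2}(\Gamma)$. Pulling back to $\mathbb{R}^2$ via $X\mapsto (X,\varphi(X))$, one has $d\mathcal{H}^2_x = \sqrt{1+|\nabla_X\varphi|^2}\,dX$ and $\Delta_\Gamma$ becomes a divergence-form operator $g^{-1/2}\partial_{X_\alpha}(g^{1/2}g^{\alpha\beta}\partial_{X_\beta})$. When $\|\nabla_X\varphi\|_{L^\infty}\le\delta_*$ this is a small perturbation of $\Delta_X$, using the weighted Laplace–Beltrami structure as in the proof of Theorem \ref{thm11}. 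From this I would obtain that $A$ is nonnegative self-adjoint, that the elliptic estimate
\[
\|\psi\|_{W^{2,2}(\Gamma)}\le C\bigl(\|\Delta_\Gamma\psi\|_{L^2(\Gamma)}+\|\psi\|_{L^2(\Gamma)}\bigr)
\]
holds, and hence that $-A$ generates a bounded analytic contraction semigroup.

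\emph{Existence, uniqueness, H\"older regularity.} Extend $\mathcal{F}$ by zero for $t\ge T$. This extension need not be H\"older across $t=T$, so instead I use the mild solution
\[
u(t)=e^{-tA}u_0+\int_0^t e^{-(t-s)A}\mathcal{F}(s)\,ds, \qquad t<T.
\]
On $(0,T)$, standard analytic-semigroup theory (Pazy) gives a classical solution with $u,u_t,Au\in C^\eta_{loc}((0,T);L^2(\Gamma))$, since $\mathcal{F}\in C^\eta_{loc}\cap L^1_{loc}$. Continuity at $t=0$ in $L^2$ holds because $u_0\in L^2$, and $u(t)\in W^{2,2}$ follows from the elliptic estimate. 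Uniqueness comes from the energy equality applied to the difference of two solutions.

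\emph{Maximal regularity.} For the estimate I follow the proof of Theorem \ref{thm11}(iii).
\begin{itemize}
\item Test the equation with $u_t$: this gives $\|u_t\|^2+\frac{\kappa}{2}\frac{d}{dt}\|\nabla_\Gamma u\|^2=\langle \mathcal{F},u_t\rangle$. Integrating over $(0,T)$ yields $\|u_t\|_{L^2L^2}\le C\|\nabla_\Gamma u_0\|+C\|\mathcal{F}\|_{L^2L^2}$.
\item The identity $\kappa\Delta_\Gamma u=u_t-\mathcal{F}$ then bounds $\kappa\Delta_\Gamma u$ as well.
\item The factor $(1+\sqrt{T})$ enters when lower-order $L^2$ terms are controlled. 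From the energy equality, $\|u(t)\|\le\|u_0\|+\int_0^t\|\mathcal{F}\|\le \|u_0\|+\sqrt{T}\|\mathcal{F}\|_{L^2L^2}$.
\end{itemize}
The constant is the same $C_1$ as in Theorem \ref{thm11}, because that argument only used $\mathcal{F}$ on $[0,T]$.

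\emph{Membership $u\in L^2(\Gamma\times(0,T))$.} This is the step I expect to be the main obstacle, since it concerns the appendix space (functions measurable on the space-time surface). I would show it exactly as in Theorem \ref{thm11}(iv). Approximate $u$ by the semigroup formula with step-function approximations of $\mathcal{F}$ in $L^2(\Gamma\times(0,T))$. These approximants are jointly measurable, and they converge in $L^2(0,T;L^2(\Gamma))$ thanks to the bound $\|u\|_{L^2(0,T;L^2)}\le\sqrt{T}\sup_t\|u(t)\|$. The limit is therefore in $L^2(\Gamma\times(0,T))$, via Fubini on $\mathbb{R}^2\times(0,T)$ with the bounded area density.
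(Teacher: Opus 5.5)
Your proof is correct, but its central step differs from the paper's.

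\textbf{The paper's route.} The paper gives no separate proof of Proposition \ref{prop12}; it reruns Sections \ref{sect4}--\ref{sect5} on $(0,T)$.
\begin{itemize}
\item Everything is done in the unweighted space $L^2(\mathbb{R}^2)$, where $L$ is not self-adjoint. One writes $L=A+B$ with $A=-\kappa\Delta_X$ and invokes the flat maximal regularity \eqref{eq32}.
\item $\Vert Bv\Vert$ is absorbed using $\Vert\nabla_X\varphi\Vert_{L^\infty}\le 1/(4\sqrt{C_\star})$, which is where $\delta_*$ comes from.
\item The leftover term $\Vert\nabla_X v\Vert_{L^2(0,T;L^2)}$ is controlled by the weighted energy equality and $\sup_t\Vert v(t)\Vert\lesssim\Vert v_0\Vert+\sqrt{T}\Vert F\Vert$. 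This is the source of the factor $1+\sqrt{T}$.
\item A Picard iteration is needed just to know a priori that $Bv\in L^2(0,T;L^2)$.
\item Space--time membership comes from heat-kernel iterates on intervals of length $T_{**}$.
\end{itemize}

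\textbf{Your route.} You work in $L^2(\Gamma)=L^2(\mathbb{R}^2;\sqrt{G}\,dX)$. There \eqref{eq35} makes $-\kappa\Delta_\Gamma$ symmetric and nonnegative, and the surjectivity of $L+\lambda$ for large $\lambda$ (Proposition \ref{prop41}) upgrades this to self-adjointness. Testing with $u_t$ then gives
\[
\Vert u_t\Vert_{L^2(0,T;L^2(\Gamma))}^2\le\kappa\Vert\nabla_\Gamma u_0\Vert_{L^2(\Gamma)}^2+\Vert\mathcal{F}\Vert_{L^2(0,T;L^2(\Gamma))}^2
\]
directly, with no lower-order terms and no iteration. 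This is actually stronger than the paper's estimate:
\begin{itemize}
\item the constant depends only on $\kappa$;
\item the factor $1+\sqrt{T}$ is unnecessary;
\item the bound holds even on $(0,\infty)$ without the divergence-form hypothesis in Theorem \ref{thm11}(iii);
\item smallness of $\nabla_X\varphi$ is used only to identify $D(A)=W^{2,2}$.
\end{itemize}
So your remarks that $1+\sqrt{T}$ ``enters when lower-order terms are controlled'' and that $C_1$ is ``the same'' describe the paper's route, not yours. You simply obtain a $\kappa$-dependent constant and may enlarge $C_1$. Your $\sup_t\Vert u(t)\Vert$ bound is still useful, but for the space--time step, not for the maximal-regularity estimate.

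\textbf{Two details to supply.} You gloss over two points, each of which needs a line.
\begin{itemize}
\item Since $u_t(t)$ is only known to lie in $L^2$, justify $\frac{d}{dt}\langle Au,u\rangle=2\langle Au,u_t\rangle$ by symmetry:
\[
\langle Au(t),u(t)\rangle-\langle Au(s),u(s)\rangle=\langle Au(t)+Au(s),u(t)-u(s)\rangle .
\]
\item ``Integrating over $(0,T)$'' requires $\Vert\nabla_\Gamma u(s)\Vert\to\Vert\nabla_\Gamma u_0\Vert$ as $s\to0^+$. This follows from $D(A^{1/2})=W^{1,2}(\Gamma)$ and the duality bound
\[
\Bigl\Vert A^{1/2}\int_0^s e^{-(s-\tau)A}\mathcal{F}(\tau)\,d\tau\Bigr\Vert^2\le\frac12\int_0^s\Vert\mathcal{F}(\tau)\Vert^2\,d\tau .
\]
\end{itemize}

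Your space--time argument is fine. The step-function approximation is not even needed: the $\sup_t$ bound together with $L^2(0,T;L^2(\mathbb{R}^2))\cong L^2(\mathbb{R}^2\times(0,T))$ and Lemma \ref{lem81} already suffices.
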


Let us state some key ideas of this paper. Set $\widehat{x}(X_1,X_2) = { }^t (\widehat{x}_1,\widehat{x}_2,\widehat{x}_3) = { }^t (X_1,X_2, \varphi (X_1,X_2))$, $\widehat{u} = \widehat{u}(X,t) = u (\widehat{x}(X) ,t)$, $\widehat{u}_0 = \widehat{u}_0(X) = u_0 (\widehat{x}(X))$, and $\widehat{\mathcal{F}} = \widehat{\mathcal{F}} (X,t) =\mathcal{F}(\widehat{x} (X) ,t)$. Then we have
\begin{equation}\label{eq13}
\begin{cases}
\partial_t \widehat{u} + L \widehat{u} = \widehat{\mathcal{F}} &\text{ in }\mathbb{R}^2 \times (0, \infty),\\
\widehat{u} \vert_{t =0} = \widehat{u}_0 & \text{ in }\mathbb{R}^2,\\
{\displaystyle{\lim_{\vert X \vert \to \infty} \widehat{u} =0}} &\text{ on }(0,\infty).
\end{cases}
\end{equation}
Here $L$ is the \emph{Laplace-Beltrami operator} defined by
\begin{equation*}
L f = - \sum_{\beta=1}^2 \sum_{\alpha =1}^2\frac{\kappa}{\sqrt{G}} \frac{\partial}{\partial X_\alpha} \bigg( \sqrt{G} g^{\alpha \beta} \frac{\partial f }{\partial X_\beta} \bigg) .
\end{equation*}
See Section \ref{sect2} for the derivation of system \eqref{eq13} and the notations $G$, $g^{\alpha \beta}$. This paper studies both stability and maximal $L^2$-regularity of system \eqref{eq13}. However, $L$ is not a selfadjoint operator and system \eqref{eq13} does not have an energy structure. To overcome these difficulties, we apply nice properties of the Laplace operator $- \Delta_X$ and the \emph{weighted Laplace-Beltrami operator} defined by
\begin{equation*}
\mathcal{L} f = \sqrt{G} L f = - \sum_{\beta=1}^2 \sum_{\alpha =1}^2 \kappa \frac{\partial}{\partial X_\alpha} \bigg( \sqrt{G} g^{\alpha \beta} \frac{\partial f }{\partial X_\beta} \bigg)
\end{equation*}
to study the Laplace-Beltrami operator $L$.

The outline of this paper is as follows. In Section \ref{sect2}, we introduce our settings and state the main results of this paper. In Section \ref{sect3}, we prepare some tools such as the Laplace operator $- \Delta_X$ and surface divergence theorem. In Section \ref{sect4}, we study the Laplace-Beltrami operator $L$ by applying the Laplace operator $- \Delta_X$. In Section \ref{sect5}, we investigate maximal $L^2$-regularity of system \eqref{eq13}. In Section \ref{sect6}, we study basic properties of the weighted Laplace-Beltrami operator $\mathcal{L}$. In Section \ref{sect7}, we derive the asymptotic stability of solutions to our system by applying both maximal $L^2$-regularity of the Laplace-Beltrami operator $L$ and nice properties of the weight Laplace-Beltrami operator $\mathcal{L}$. In Appendix, we characterize our function spaces on surfaces, and introduce one useful tool to derive the H\"{o}lder continuity of solutions to our system.

In this paper, we make use of the semigroup theory, the differential geometry, and the maximal regularity theory to derive our results. We refer the reader to \cite{Paz83}, \cite{EN00} for the semigroup theory, \cite{Cia05}, \cite{Jos11} for the differential geometry, and \cite{DHP03}, \cite{KW04} for maximal $L^p$-regularity. From here we often use the Einstein summation convention. For example,
\begin{equation*}
g^{\alpha \beta}g_\alpha = \sum_{\alpha =1}^2 g^{\alpha \beta}g_\alpha \text{ or } g^{\alpha \beta}\frac{\partial^2 f}{\partial X_\alpha \partial X_\beta} = \sum_{\alpha,\beta =1}^2g^{\alpha \beta}\frac{\partial^2 f}{\partial X_\alpha \partial X_\beta}. 
\end{equation*}

%\newpage

\section{Settings and Main Results}\label{sect2}

We define notations and state the main results of this paper. Let $\Gamma \subset \mathbb{R}^3$ be a 2-dimensional surface, and $\varphi = \varphi (X_1,X_2) \in BC^2 (\mathbb{R}^2)$. Assume that the surface $\Gamma$ can be represented by \eqref{eq12}. From now on we fix $\varphi$.

Let us first define fundamental notations. For all $X_1,X_2 \in \mathbb{R}$, we set
\begin{equation*}\widehat{x} = \widehat{x}(X_1,X_2) = \begin{pmatrix} \widehat{x}_1\\ \widehat{x}_2\\ \widehat{x}_3
\end{pmatrix} =
\begin{pmatrix}
X_1 \\
X_2\\
\varphi (X_1,X_2)
\end{pmatrix}.
\end{equation*}
It is clear that
\begin{equation*}
\Gamma = \{ x \in \mathbb{R}^3;{ \ }x= \widehat{x}(X_1,X_2), { }^t(X_1,X_2) \in \mathbb{R}^2 \}.
\end{equation*}
For every $X = { }^t(X_1,X_2) \in \mathbb{R}^2$, we define
\begin{equation*}
g_1 = g_1 (X) = \frac{\partial \widehat{x}}{\partial X_1}  = \begin{pmatrix}
1\\
0\\
\partial_{ X_1 } \varphi
\end{pmatrix},{ \ }
g_2 = g_2 (X) = \frac{\partial \widehat{x}}{\partial X_2}  = \begin{pmatrix}
0\\
1\\
\partial_{ X_2 } \varphi
\end{pmatrix},
\end{equation*}
\begin{align*}
g_{11} & = g_{11}(X) = g_1 \cdot g_1 = 1 + (\partial_{X_1}\varphi)^2,\\
g_{22} & = g_{22}(X) = g_2 \cdot g_2 =1 + (\partial_{X_2}\varphi)^2 ,\\
g_{12} & = g_{12}(X) = g_1 \cdot g_2= (\partial_{X_1}\varphi) (\partial_{X_2}\varphi),\\
g_{21} & = g_{21}(X) = g_2 \cdot g_1 =(\partial_{X_1}\varphi) (\partial_{X_2}\varphi),\\
G & = G (X) = g_{11} g_{22} - g_{12} g_{21} = 1 + (\partial_{X_1}\varphi)^2 + (\partial_{X_2}\varphi)^2.
\end{align*}
It is clear that $G \geq 1$ for all $X \in \mathbb{R}^2$. Moreover, we set
\begin{align*}
\begin{pmatrix}
g^{11} & g^{12}\\
g^{21} & g^{22}
\end{pmatrix} = \begin{pmatrix}
g_{11} & g_{12}\\
g_{21} & g_{22}
\end{pmatrix}^{-1} = \frac{1}{g_{11}g_{22} - g_{12} g_{21}  }\begin{pmatrix}
g_{22} & -g_{12}\\
-g_{21} & g_{11}
\end{pmatrix}\\ = \frac{1}{1 + (\partial_{X_1}\varphi)^2 + (\partial_{X_2}\varphi)^2 } \begin{pmatrix}
1 +  (\partial_{X_2}\varphi)^2 &   -(\partial_{X_1}\varphi) (\partial_{X_2}\varphi)&\\
 - (\partial_{X_1}\varphi) (\partial_{X_2}\varphi)& 1 +(\partial_{X_1}\varphi)^2
\end{pmatrix}.
\end{align*}
It is easy to check that
\begin{equation*}
g_1 \times g_2 = \begin{pmatrix}
- \partial_{X_1} \varphi\\
- \partial_{X_2} \varphi\\
1
\end{pmatrix}, { \ }G = \vert g_1 \times g_2 \vert^2,
\end{equation*}
and that the unit outer normal vector $n = n(x) = { }^t (n_1,n_2,n_3)$ at $x \in \Gamma$ can be represented by
\begin{equation*}
n (\widehat{x}(X)) = \frac{g_1 \times g_2}{ \vert g_1 \times g_2 \vert} = \frac{1}{\sqrt{ 1 + (\partial_{X_1} \varphi )^2 + (\partial_{X_2} \varphi)^2 } } \begin{pmatrix}
- \partial_{X_1} \varphi\\
- \partial_{X_2} \varphi\\
1
\end{pmatrix}.
\end{equation*}

Direct calculations give the estimates.
\begin{lemma}\label{lem21}For each $X \in \mathbb{R}^2$ and $\alpha, \beta \in \{ 1,2 \}$,
\begin{align*}
& 1  \leq G (X) \leq 1 + 2 \Vert \nabla_X \varphi \Vert_{L^\infty(\mathbb{R}^2)}^2,\\  
& \frac{1}{1 + 2 \Vert \nabla_X \varphi \Vert_{L^\infty(\mathbb{R}^2)}^2} \leq \frac{1}{G(X)}  \leq 1,\\
& \bigg\Vert \frac{\partial G}{\partial X_\alpha} \bigg\Vert_{L^\infty (\mathbb{R}^2)} \leq 4 \Vert \nabla_X \varphi \Vert_{W^{1,\infty} (\mathbb{R}^2)}^2,\\
& \bigg\Vert \frac{\partial g^{\alpha \beta}}{\partial X_\alpha} \bigg\Vert_{L^\infty (\mathbb{R}^2)} \leq  6 \Vert \nabla_X^2 \varphi \Vert_{L^{\infty} (\mathbb{R}^2)}.
\end{align*}
\end{lemma}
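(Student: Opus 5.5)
The plan is to compute each quantity explicitly in terms of $p_\alpha := \partial_{X_\alpha}\varphi$ and bound it pointwise. Throughout, write $M := \Vert \nabla_X \varphi \Vert_{L^\infty(\mathbb{R}^2)}$.

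\textbf{Bounds on $G$ and $1/G$.} We have $G = 1 + p_1^2 + p_2^2 \geq 1$. Also $p_1^2 + p_2^2 \leq 2M^2$ whether $\Vert \nabla_X\varphi\Vert_{L^\infty}$ is read as the maximum over components or as the sup of the Euclidean norm; the constant $2$ covers the componentwise convention. Taking reciprocals of $1 \leq G \leq 1+2M^2$ gives the second line.

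\textbf{Bound on $\partial G/\partial X_\alpha$.} Differentiating,
\begin{equation*}
\partial_{X_\alpha} G = 2 p_1 \partial_{X_\alpha} p_1 + 2 p_2 \partial_{X_\alpha} p_2 .
\end{equation*}
Each product is bounded by $\Vert \nabla_X\varphi\Vert_{L^\infty}\Vert \nabla_X^2\varphi\Vert_{L^\infty} \leq \Vert \nabla_X \varphi \Vert_{W^{1,\infty}}^2$. Summing the two terms gives the constant $4$.

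\textbf{Bound on $\partial g^{\alpha\beta}/\partial X_\alpha$.} This is the only step needing care, because the bound involves only $\Vert\nabla_X^2\varphi\Vert_{L^\infty}$, with no factor of $M$. So one cannot simply differentiate the quotient and estimate crudely; the growth in $p$ has to be controlled uniformly. Write
\begin{equation*}
g^{\alpha\beta} = \delta_{\alpha\beta} - \frac{p_\alpha p_\beta}{G},
\end{equation*}
which agrees with the displayed inverse matrix: for example $g^{11} = (1+p_2^2)/G = 1 - p_1^2/G$. For fixed $\alpha,\beta$ the derivative is
\begin{equation*}
\partial_{X_\alpha} g^{\alpha\beta} = -\frac{(\partial_{X_\alpha} p_\alpha) p_\beta + p_\alpha \partial_{X_\alpha} p_\beta}{G} + \frac{p_\alpha p_\beta \, \partial_{X_\alpha} G}{G^2}.
\end{equation*}

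\textbf{The key pointwise inequalities.} We need $|p_\gamma| \leq \sqrt{G}$, $|p_\alpha p_\beta| \leq G$, and $|p_\gamma| \leq \sqrt{G}$ once more.
\begin{itemize}
\item Each term of the first fraction is bounded by $|\nabla^2_X\varphi|\,|p|/G \leq |\nabla^2_X\varphi|/\sqrt{G} \leq \Vert\nabla_X^2\varphi\Vert_{L^\infty}$. This contributes at most $2\Vert\nabla_X^2\varphi\Vert_{L^\infty}$.
\item For the second term, $|\partial_{X_\alpha} G| \leq 2(|p_1|+|p_2|)\Vert\nabla_X^2\varphi\Vert_{L^\infty} \leq 4\sqrt{G}\,\Vert\nabla_X^2\varphi\Vert_{L^\infty}$. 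Combined with $|p_\alpha p_\beta|/G^2 \leq 1/G$ and $\sqrt G/G \leq 1$, this contributes at most $4\Vert\nabla_X^2\varphi\Vert_{L^\infty}$.
\end{itemize}
The total is $6\Vert\nabla_X^2\varphi\Vert_{L^\infty}$, as claimed. If the statement is read with summation over $\alpha$, the same computation applies term by term, possibly after tightening the estimate $|p_1|+|p_2| \leq \sqrt{2G}$. I would verify the exact convention against the constant when writing it out.
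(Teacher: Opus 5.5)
Your proposal is correct and takes the same route as the paper. The paper only says the bounds follow by direct calculation, and your computation supplies that calculation with the stated constants. The key ingredients are the rewriting $g^{\alpha\beta}=\delta_{\alpha\beta}-p_\alpha p_\beta/G$ and the pointwise bounds $|p_\gamma|\le\sqrt G$, $|p_\alpha p_\beta|\le G$, which play the role of the paper's \eqref{eq49}--\eqref{EQ4010}. The fixed-$(\alpha,\beta)$ reading you verified is the intended one: the paper later applies the bound $6$ to each of the four pairs to obtain the constant $24$ in \eqref{EQ4013}.
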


Next, we recall differential operators on surfaces. Set $\nabla_\Gamma = { }^t (\partial_1^\Gamma , \partial_2^\Gamma , \partial_3^\Gamma)$ and $\Delta_\Gamma = (\partial_1^\Gamma)^2 + (\partial_2^\Gamma)^2 + (\partial_3^\Gamma)^2$, where $\partial_j^\Gamma \psi = \partial_j \psi -n_j (n \cdot \nabla ) \psi$. From \cite[Lemma 3.1]{Kob23} (see \cite{Cia05}, \cite{Jos11}), we have the following representation formulas.
\begin{lemma}\label{lem22}
$(\rm{i})$ For all $\psi,\psi_\sharp,\psi_\flat \in C^\infty (\mathbb{R}^3)$,
\begin{align*}
\int_\Gamma \psi { \ }d \mathcal{H}^2_x &= \int_{\mathbb{R}^2} \widehat{\psi} \sqrt{G} { \ }dX,\\
\int_\Gamma \partial^\Gamma_j \psi { \ }d \mathcal{H}^2_x &= \int_{\mathbb{R}^2} g^{\alpha \beta} \frac{\partial \widehat{x}_j }{\partial X_\alpha} \frac{ \partial \widehat{\psi} }{\partial X_\beta} \sqrt{G} { \ }dX,\\
\int_\Gamma \nabla_\Gamma \psi_\sharp \cdot \nabla_\Gamma \psi_\flat { \ }d \mathcal{H}^2_x & = \int_{\mathbb{R}^2} g^{\alpha \beta} \frac{\partial \widehat{\psi}_\sharp }{\partial X_\alpha} \frac{ \partial \widehat{\psi}_\flat }{\partial X_\beta} \sqrt{G} { \ }dX,\\
\int_\Gamma \Delta_\Gamma \psi { \ }d \mathcal{H}^2_x & = \int_{\mathbb{R}^2} \frac{1}{\sqrt{G}} \frac{\partial}{\partial X_\alpha} \bigg( \sqrt{G} g^{\alpha \beta} \frac{\partial \widehat{\psi}}{\partial X_\beta} \bigg)  \sqrt{G} { \ }dX,
\end{align*}
where $\widehat{\psi} = \widehat{\psi} (X) = \psi (\widehat{x}(X) )$.\\
$(\rm{ii})$ For all $\Psi = { }^t (\Psi_1, \Psi_2,\Psi_3) \in [C^\infty (\mathbb{R}^3)]^3$ and $\Phi \in C^\infty (\mathbb{R}^4)$,
\begin{align*}
\int_\Gamma \nabla_\Gamma  \cdot \Psi { \ }d \mathcal{H}^2_x &= \int_{\mathbb{R}^2} g^{\alpha \beta}g_\beta \cdot \frac{\partial \widehat{\Psi} }{\partial X_\alpha} \sqrt{G} { \ }dX,\\
\int_\Gamma \frac{\partial \Phi}{\partial t} { \ }d \mathcal{H}_x^2 &= \int_{\mathbb{R}^2} \frac{\partial \widehat{\Phi}}{\partial t} \sqrt{G} { \ }d X. 
\end{align*}
Here $\widehat{\Psi} = \widehat{\Psi} (X) = \Psi (\widehat{x}(X) )$ and $\widehat{\Phi} = \widehat{\Phi} (X,t) = \Phi (\widehat{x}(X),t)$. 
\end{lemma}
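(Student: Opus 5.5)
The plan is to prove every identity in Lemma \ref{lem22} pointwise in the parametrization $X \mapsto \widehat{x}(X)$ and then integrate with the area formula. The first formula of (i) is the area formula for the graph parametrization: $d\mathcal{H}^2_x = \vert g_1 \times g_2 \vert \, dX = \sqrt{G}\, dX$, using $G = \vert g_1 \times g_2\vert^2$. The last formula of (ii) follows the same way, since $\widehat{\Phi}(X,t) = \Phi(\widehat{x}(X),t)$ and $\partial_t$ commutes with composition in $X$. Every remaining formula then reduces to a pointwise identity for the integrand, multiplied by $\sqrt{G}$ and integrated over $\mathbb{R}^2$.

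The central pointwise fact is a representation of the tangential projection $P = I - n \otimes n$ (so $P_{ij} = \delta_{ij} - n_i n_j$):
\begin{equation*}
P = g^{\alpha\beta}\, g_\alpha \otimes g_\beta .
\end{equation*}
The vectors $g_1, g_2, n$ form a basis of $\mathbb{R}^3$, so it suffices to test both sides on this basis.
\begin{itemize}
\item On $g_\gamma$: $g^{\alpha\beta} g_\alpha (g_\beta \cdot g_\gamma) = g^{\alpha\beta} g_{\beta\gamma}\, g_\alpha = g_\gamma$.
\item On $n$: both sides vanish, because $g_\beta \cdot n = 0$.
\end{itemize}
Consequently $\partial_j^\Gamma \psi = P_{ij}\partial_i\psi = g^{\alpha\beta} (g_\alpha)_j \,(g_\beta\cdot \nabla\psi)$. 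The chain rule gives $g_\beta \cdot \nabla \psi(\widehat{x}) = \partial_{X_\beta}\widehat{\psi}$, and $(g_\alpha)_j = \partial \widehat{x}_j/\partial X_\alpha$. This yields
\begin{equation*}
\widehat{\partial_j^\Gamma \psi} = g^{\alpha\beta}\frac{\partial \widehat{x}_j}{\partial X_\alpha}\frac{\partial \widehat{\psi}}{\partial X_\beta},
\end{equation*}
which is the second formula of (i). In particular $\partial_j^\Gamma\psi$ on $\Gamma$ depends only on $\widehat{\psi}$, which justifies iterating tangential derivatives even though $n$ lives only on $\Gamma$.

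The remaining first-order formulas follow from this representation.
\begin{itemize}
\item Gradient product: $\nabla_\Gamma\psi_\sharp\cdot\nabla_\Gamma\psi_\flat = g^{\alpha\beta}g^{\alpha'\beta'}g_{\alpha\alpha'}\,\partial_\beta\widehat{\psi}_\sharp\,\partial_{\beta'}\widehat{\psi}_\flat = g^{\beta\beta'}\partial_\beta\widehat{\psi}_\sharp\,\partial_{\beta'}\widehat{\psi}_\flat$, using the symmetry of $g^{\alpha\beta}$.
\item Divergence: $\nabla_\Gamma\cdot\Psi = \sum_j \partial_j^\Gamma\Psi_j = g^{\alpha\beta}\, g_\alpha\cdot \partial_{X_\beta}\widehat{\Psi}$, which after relabelling (symmetry of $g^{\alpha\beta}$) is the stated form $g^{\alpha\beta}g_\beta\cdot\partial_{X_\alpha}\widehat{\Psi}$.
\end{itemize}

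The main obstacle is the Laplace--Beltrami formula. I would write $\Delta_\Gamma\psi = \nabla_\Gamma\cdot V$ with $V = \nabla_\Gamma\psi$, whose pull-back is the tangential field $\widehat{V} = g^{\gamma\delta}g_\gamma\,\partial_\delta\widehat{\psi}$. Applying the divergence formula gives
\begin{equation*}
\widehat{\Delta_\Gamma\psi} = g^{\beta\delta}\partial_\beta\partial_\delta\widehat{\psi} + \Big[\partial_\beta g^{\beta\delta} + g^{\alpha\beta}g^{\gamma\delta}\, g_\alpha\cdot\partial_\beta g_\gamma\Big]\partial_\delta\widehat{\psi}.
\end{equation*}
The second-order term comes from $g^{\alpha\beta}g^{\gamma\delta}g_{\alpha\gamma} = g^{\beta\delta}$. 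The first bracketed term comes from $g^{\alpha\beta}g_{\alpha\gamma}\,\partial_\beta g^{\gamma\delta} = \partial_\gamma g^{\gamma\delta}$.

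To identify the bracket, I would use the Christoffel symbols $\Gamma^\beta_{\beta\gamma} = g^{\alpha\beta}\, g_\alpha\cdot\partial_\beta g_\gamma$, together with the symmetry $\partial_\beta g_\gamma = \partial_\gamma g_\beta$ (both equal $\partial_\beta\partial_\gamma\widehat{x}$). With these, the contraction becomes $g^{\alpha\beta}g_\alpha\cdot\partial_\gamma g_\beta = \tfrac12 g^{\alpha\beta}\partial_\gamma g_{\alpha\beta} = \partial_\gamma \log\sqrt{G}$, by Jacobi's formula for the derivative of a determinant. The bracket therefore equals
\begin{equation*}
\partial_\beta g^{\beta\delta} + g^{\gamma\delta}\partial_\gamma\log\sqrt{G} = \frac{1}{\sqrt{G}}\,\partial_\beta\big(\sqrt{G}\,g^{\beta\delta}\big).
\end{equation*}
Substituting back gives $\widehat{\Delta_\Gamma\psi} = \frac{1}{\sqrt{G}}\partial_{X_\alpha}\big(\sqrt{G}\, g^{\alpha\beta}\,\partial_{X_\beta}\widehat{\psi}\big)$, and integrating against $\sqrt{G}\,dX$ completes the proof.
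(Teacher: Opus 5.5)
Your proposal is correct. It differs from the paper mainly in that it is a proof at all: the paper gives no argument for this lemma and imports it from \cite[Lemma 3.1]{Kob23} and the references \cite{Cia05}, \cite{Jos11}.

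Your derivation is self-contained. It rests on four ingredients:
\begin{itemize}
\item the pointwise identity $I - n\otimes n = g^{\alpha\beta} g_\alpha \otimes g_\beta$, checked on the basis $g_1, g_2, n$;
\item the chain rule $g_\beta\cdot\nabla\psi(\widehat{x}) = \partial_{X_\beta}\widehat{\psi}$;
\item the area formula with Jacobian $\vert g_1\times g_2\vert = \sqrt{G}$;
\item for $\Delta_\Gamma$, the symmetry $\partial_{X_\beta} g_\gamma = \partial_{X_\gamma} g_\beta$ together with Jacobi's formula $g^{\alpha\beta}\partial_{X_\gamma} g_{\alpha\beta} = \partial_{X_\gamma} G / G$.
\end{itemize}
All of these computations check out.

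What your route buys over the citation is that it proves the pointwise identities
$\widehat{\partial_j^\Gamma\psi} = g^{\alpha\beta}\,\partial_{X_\alpha}\widehat{x}_j\,\partial_{X_\beta}\widehat{\psi}$
and
$\widehat{\Delta_\Gamma\psi} = G^{-1/2}\partial_{X_\alpha}\big(\sqrt{G}\,g^{\alpha\beta}\partial_{X_\beta}\widehat{\psi}\big)$.
These are stronger than the integral identities as stated, and they are what the paper actually relies on later. The norm representations in Lemma \ref{lem23} need them. So do the conversions of $\int_\Gamma(\Delta_\Gamma\psi_\sharp)\psi_\flat$ and $\int_\Gamma(\nabla_\Gamma\cdot\Psi)\psi$ in the proof of Lemma \ref{lem33}, which do not follow directly from the integral formulas alone.

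Your argument also makes two further points visible. First, the $C^2$ regularity of $\varphi$ already assumed in the paper is exactly what the second-order terms require. Second, iterated tangential derivatives do not depend on how $\partial_j^\Gamma\psi$ is extended off $\Gamma$. The citation route buys only brevity.

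One remark to make explicit: on the unbounded surface $\Gamma$, an arbitrary $\psi\in C^\infty(\mathbb{R}^3)$ need not be integrable, so each identity should be read as holding whenever one side is defined. Because you identify the integrands exactly before invoking the area formula, this comes for free.
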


From Lemmas \ref{lem21} and \ref{lem22}, we have the lemma.
\begin{lemma}\label{lem23}
Let $\psi \in C^\infty (\mathbb{R}^3)$ and $i,j \in \{ 1,2,3 \}$. Set $\widehat{\psi} = \widehat{\psi}(X) = \psi (\widehat{x}(X))$. Assume that $\widehat{\psi} \in W^{2,2}(\mathbb{R}^2)$. Write
\begin{align*}
\Vert \psi \Vert_{L^2(\Gamma)} & = \bigg( \int_{\mathbb{R}^2} \vert \widehat{\psi} \vert^2 \sqrt{G} { \ }dX \bigg)^{1/2} = \Vert G^{1/4} \widehat{\psi} \Vert_{L^2( \mathbb{R}^2 )},\\
\Vert \partial_j^\Gamma \psi \Vert_{L^2(\Gamma)} & = \bigg( \int_{\mathbb{R}^2} \left\vert g^{\alpha \beta} \frac{\partial \widehat{x}_j }{\partial X_\alpha} \frac{ \partial \widehat{\psi} }{\partial X_\beta} \right\vert^2 \sqrt{G} { \ }dX \bigg)^{1/2},\\
\Vert \partial_i^\Gamma \partial_j^\Gamma \psi \Vert_{L^2(\Gamma)} & = \bigg( \int_{\mathbb{R}^2} \left\vert g^{\alpha' \beta'} \frac{\partial \widehat{x}_i }{\partial X_{\alpha'}} \frac{ \partial }{\partial X_{\beta'}} \bigg( g^{\alpha \beta} \frac{\partial \widehat{x}_j }{\partial X_\alpha} \frac{ \partial \widehat{\psi} }{\partial X_\beta} \bigg) \right\vert^2 \sqrt{G} { \ }dX \bigg)^{1/2}.
\end{align*}
Then there is $C = C(\Vert \nabla_X \varphi \Vert_{W^{1,\infty}(\mathbb{R}^2)}) >0$ such that
\begin{equation*}
\Vert \psi \Vert_{L^2(\Gamma)} + \Vert \partial_j^\Gamma \psi \Vert_{L^2(\Gamma)} +\Vert \partial_i^\Gamma \partial_j^\Gamma \psi \Vert_{L^2(\Gamma)}  \leq C \Vert \widehat{\psi} \Vert_{W^{2,2}(\mathbb{R}^2)}.
\end{equation*}
\end{lemma}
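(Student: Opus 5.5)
The bound is checked term by term, using only the explicit formulas for the metric quantities and the pointwise bounds of Lemma \ref{lem21}. All quantities entering the three weighted integrals are $g^{\alpha\beta}$, $\sqrt{G}$, $\partial\widehat{x}_j/\partial X_\alpha$ and their first derivatives. Each of these is an explicit rational function of $\partial_{X_1}\varphi$, $\partial_{X_2}\varphi$ (with derivatives also involving $\nabla_X^2\varphi$), and the denominators are powers of $G\geq 1$. Hence there is a constant $C_0=C_0(\Vert \nabla_X\varphi\Vert_{W^{1,\infty}(\mathbb{R}^2)})$ with
\begin{equation*}
\sqrt{G}\le C_0,\quad |g^{\alpha\beta}|\le 1,\quad \Big|\frac{\partial \widehat{x}_j}{\partial X_\alpha}\Big|\le C_0,\quad \Big|\frac{\partial g^{\alpha\beta}}{\partial X_{\gamma}}\Big|+\Big|\frac{\partial^2 \widehat{x}_j}{\partial X_\gamma\partial X_\alpha}\Big|\le C_0 .
\end{equation*}
The bound $|g^{\alpha\beta}|\le 1$ follows from the explicit inverse matrix, since, for example, $(1+(\partial_{X_2}\varphi)^2)/G\le 1$ and $|\partial_{X_1}\varphi\,\partial_{X_2}\varphi|\le G/2$. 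The derivatives of $g^{\alpha\beta}$ are handled as in Lemma \ref{lem21}, differentiating the quotients and using $G\ge1$. For the second derivatives of $\widehat{x}_j$, note that $\widehat{x}_1,\widehat{x}_2$ are linear and $\widehat{x}_3=\varphi$, so these second derivatives are just entries of $\nabla_X^2\varphi$.

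With these bounds the three terms follow in order. First, $\Vert\psi\Vert_{L^2(\Gamma)}^2=\int|\widehat{\psi}|^2\sqrt{G}\,dX\le C_0\Vert\widehat\psi\Vert_{L^2}^2$. Second, the integrand of $\Vert\partial_j^\Gamma\psi\Vert_{L^2(\Gamma)}$ is a sum of at most four terms of the form (bounded coefficient)$\times\partial_{X_\beta}\widehat\psi$. Cauchy--Schwarz in the finite sum gives $\Vert \partial_j^\Gamma\psi\Vert_{L^2(\Gamma)}\le C\Vert\nabla_X\widehat\psi\Vert_{L^2}$.

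For the second-order term, I apply the product rule inside the inner derivative:
\begin{equation*}
\frac{\partial}{\partial X_{\beta'}}\Big(g^{\alpha\beta}\frac{\partial\widehat{x}_j}{\partial X_\alpha}\frac{\partial\widehat\psi}{\partial X_\beta}\Big)=\Big(\frac{\partial g^{\alpha\beta}}{\partial X_{\beta'}}\frac{\partial\widehat{x}_j}{\partial X_\alpha}+g^{\alpha\beta}\frac{\partial^2\widehat{x}_j}{\partial X_{\beta'}\partial X_\alpha}\Big)\frac{\partial\widehat\psi}{\partial X_\beta}+g^{\alpha\beta}\frac{\partial\widehat{x}_j}{\partial X_\alpha}\frac{\partial^2\widehat\psi}{\partial X_{\beta'}\partial X_\beta}.
\end{equation*}
Every coefficient is bounded by a power of $C_0$. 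After multiplying by the bounded factor $g^{\alpha'\beta'}\partial\widehat{x}_i/\partial X_{\alpha'}$ and the weight $\sqrt{G}$, the integrand is dominated by $C(|\nabla_X\widehat\psi|^2+|\nabla_X^2\widehat\psi|^2)$. Summing the three estimates yields the claim with $C$ depending only on $\Vert\nabla_X\varphi\Vert_{W^{1,\infty}(\mathbb{R}^2)}$.

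There is no real obstacle here. The only point needing care is that the coefficient bounds use only $\varphi\in BC^2$, i.e.\ at most second derivatives of $\varphi$, and this is exactly why the constant depends on the $W^{1,\infty}$-norm of $\nabla_X\varphi$. The assumption $\widehat\psi\in W^{2,2}(\mathbb{R}^2)$ guarantees that all right-hand integrals are finite.
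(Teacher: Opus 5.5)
Your proof is correct and follows the paper's approach: the paper proves this lemma only by the remark that it follows from Lemmas \ref{lem21} and \ref{lem22}, i.e.\ the representation formulas plus pointwise bounds on $G$, $g^{\alpha\beta}$ and their derivatives, and your term-by-term product-rule estimate is the fleshed-out version of that. You also correctly observe that all derivatives $\partial g^{\alpha\beta}/\partial X_\gamma$ are needed, not only the case $\gamma=\alpha$ literally stated in Lemma \ref{lem21}, and that they are bounded by the same quotient computation.
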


For ease of understanding, we write that for $f_0 \in L^2(\mathbb{R}^2)$ and $f \in W^{1,2}(\mathbb{R}^2)$,
\begin{align}
\Vert f_0 \Vert_{\mathscr{L}^2(\Gamma)} &:= \Vert G^{1/4} f_0 \Vert_{L^2(\mathbb{R}^2)},\label{eq21}\\
\Vert f \Vert_{\dot{\mathscr{W}}^{1,2} (\Gamma) } &:= \bigg( \int_{\mathbb{R}^2} g^{\alpha \beta} \frac{\partial f }{\partial X_\alpha} \frac{ \partial f }{\partial X_\beta} \sqrt{G} { \ }dX \bigg)^{1/2},\label{eq22}\\
\Vert f \Vert_{\mathscr{W}^{1,2} (\Gamma) } &:= ( \Vert f \Vert_{\mathscr{L}^2(\Gamma)}^2 + \Vert f \Vert_{\dot{\mathscr{W}}^{1,2}(\Gamma)}^2 )^{1/2}.\label{eq23}
\end{align}
From Lemmas \ref{lem21} and \ref{lem42}, we have the estimates.
\begin{lemma}\label{lem24}
If $\Vert \nabla_X \varphi \Vert_{L^\infty (\mathbb{R}^2)} \leq 1$, then for every $f_0 \in L^2(\mathbb{R}^2)$ and $f \in W^{1,2} (\mathbb{R}^2)$ 
\begin{align*}
\Vert f_0 \Vert_{L^2(\mathbb{R}^2)} & \leq \Vert f_0 \Vert_{\mathscr{L}^2(\Gamma)} \leq 3 \Vert f_0 \Vert_{L^2(\mathbb{R}^2)},\\
(1/3)\Vert \nabla_X f \Vert_{L^2(\mathbb{R}^2)} & \leq \Vert f \Vert_{\dot{\mathscr{W}}^{1,2}(\Gamma)} \leq 3 \Vert \nabla_X f \Vert_{L^2(\mathbb{R}^2)},\\
(1/3)\Vert f \Vert_{W^{1,2}(\mathbb{R}^2)} & \leq \Vert f \Vert_{\mathscr{W}^{1,2}( \Gamma )} \leq 3 \Vert f \Vert_{W^{1,2}(\mathbb{R}^2)}.
\end{align*}
\end{lemma}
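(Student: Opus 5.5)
The three equivalences all come from pointwise bounds on $G$ and on the inverse metric $(g^{\alpha\beta})$, integrated over $\mathbb{R}^2$. Set $\delta := \Vert \nabla_X \varphi \Vert_{L^\infty(\mathbb{R}^2)} \leq 1$.

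\textbf{$L^2$ norms.} By Lemma \ref{lem21}, $1 \leq G(X) \leq 1 + 2\delta^2 \leq 3$. Hence
\begin{equation*}
1 \leq G^{1/4} \leq 3^{1/4} \leq 3 .
\end{equation*}
Multiplying by $\vert f_0 \vert^2$ after squaring and integrating gives $\Vert f_0 \Vert_{L^2(\mathbb{R}^2)} \leq \Vert f_0 \Vert_{\mathscr{L}^2(\Gamma)} \leq 3 \Vert f_0 \Vert_{L^2(\mathbb{R}^2)}$ directly from \eqref{eq21}.

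\textbf{Homogeneous $W^{1,2}$ norms.} This is the only step needing a small computation: two-sided bounds on the quadratic form $\xi \mapsto g^{\alpha\beta}\xi_\alpha\xi_\beta$ for $\xi \in \mathbb{R}^2$. This is presumably the content of Lemma \ref{lem42}, but it can be checked by hand. Write $p = \nabla_X \varphi$. From the explicit inverse in Section \ref{sect2},
\begin{equation*}
g^{\alpha\beta}\xi_\alpha\xi_\beta = \frac{\vert\xi\vert^2 + \vert p\vert^2\vert\xi\vert^2 - (p\cdot\xi)^2}{G}, \qquad G = 1 + \vert p\vert^2 .
\end{equation*}
By Cauchy--Schwarz, $0 \leq \vert p\vert^2\vert\xi\vert^2 - (p\cdot\xi)^2 \leq \vert p\vert^2\vert\xi\vert^2$. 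This yields
\begin{equation*}
\frac{\vert\xi\vert^2}{G} \leq g^{\alpha\beta}\xi_\alpha\xi_\beta \leq \vert\xi\vert^2 .
\end{equation*}
The eigenvalues of $(g^{\alpha\beta})$ are $1$ and $1/G$, so these bounds are sharp. Now multiply by $\sqrt{G} \in [1, \sqrt{3}]$. The lower bound becomes $G^{-1/2}\vert\xi\vert^2 \geq 3^{-1/2}\vert\xi\vert^2$, and the upper bound becomes at most $\sqrt{3}\,\vert\xi\vert^2$. Take $\xi = \nabla_X f$, integrate, and take square roots. This gives
\begin{equation*}
3^{-1/4}\Vert \nabla_X f \Vert_{L^2(\mathbb{R}^2)} \leq \Vert f \Vert_{\dot{\mathscr{W}}^{1,2}(\Gamma)} \leq 3^{1/4} \Vert \nabla_X f \Vert_{L^2(\mathbb{R}^2)},
\end{equation*}
which is stronger than the stated constants $1/3$ and $3$.

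\textbf{Full $W^{1,2}$ norms.} Square the two previous estimates and add them, using \eqref{eq23} and $\Vert f\Vert_{W^{1,2}}^2 = \Vert f\Vert_{L^2}^2 + \Vert\nabla_X f\Vert_{L^2}^2$. The constants again lie between $3^{-1/2}$ and $3$ after squaring, so the square roots lie within $[1/3, 3]$.

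Everything here is elementary. The only point needing care is the lower bound on the inverse metric, which must come from the explicit formula and not from a crude entrywise estimate: the off-diagonal entries can be of the same size as the diagonal ones, so entrywise bounds do not give positivity.
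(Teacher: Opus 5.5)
Your proposal is correct and is essentially the paper's argument: the paper obtains Lemma \ref{lem24} directly from $1 \le G \le 3$ and the quadratic-form bounds \eqref{eq44}--\eqref{eq45} of Lemma \ref{lem42}, whose proof is exactly your computation $g^{\alpha\beta}\xi_\alpha\xi_\beta\sqrt{G} = G^{-1/2}\bigl(|\xi|^2 + (p_2\xi_1 - p_1\xi_2)^2\bigr)$. The only difference is that you bound the cross term by Cauchy--Schwarz rather than by $(a-b)^2 \le 2a^2 + 2b^2$, which sharpens the upper constant for the homogeneous norm from $\sqrt{3}$ to $3^{1/4}$; both are well within the stated factor $3$.
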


Applying Lemma \ref{lem22} into system \eqref{eq11}, we have \eqref{eq13}. Based on Lemma \ref{lem23}, we consider the following abstract system:
\begin{equation}\label{eq24}
\begin{cases}
{\displaystyle{ \frac{dv}{dt} + Lv = F }} \text{ on }(0,\infty),\\
v\vert_{t =0} = v_0.
\end{cases}
\end{equation}
Here $v_0 \in L^2(\mathbb{R}^2)$, $F \in L^2(0,\infty ;L^2(\mathbb{R}^2))$, $\kappa >0$, and
\begin{equation}\label{eq25}
\begin{cases}
{\displaystyle{L f := - \frac{\kappa}{\sqrt{G}} \frac{\partial}{\partial X_\alpha} \bigg( \sqrt{G} g^{\alpha \beta} \frac{\partial f }{\partial X_\beta} \bigg),}}\\
D (L) := W^{2,2} (\mathbb{R}^2).
\end{cases}
\end{equation}
We also call $L$ the \emph{Laplace-Beltrami operator}.

We state the main results of this paper.
\begin{theorem}[Energy equality and H\"{o}lder continuity]\label{thm25} Let $\kappa >0$, $\varphi \in BC^2(\mathbb{R}^2)$, $v_0 \in L^2 (\mathbb{R}^2)$, and $F \in L^2(0,\infty;L^2(\mathbb{R}^2)) \cap C_{loc}^\eta ((0,\infty); L^2 (\mathbb{R}^2))$ for some $0< \eta <1$. Assume that 
\begin{equation*}
\Vert \nabla_X \varphi \Vert_{L^\infty (\mathbb{R}^2)} \leq \frac{1}{4} .
\end{equation*}Then system \eqref{eq24} admits a unique global-in-time strong solution $v$ in
\begin{equation*}
C([0,\infty) ; L^2(\mathbb{R}^2) ) \cap C((0,\infty) ; W^{2,2} (\mathbb{R}^2) ) \cap C^1((0,\infty) ; L^2 (\mathbb{R}^2)).
\end{equation*}
Moreover, the solution $v$ satisfies that for all $0 \leq s < t <+ \infty$
\begin{multline}\label{eq26}
\Vert G^{1/4} v(t) \Vert_{L^2(\mathbb{R}^2 )}^2 + 2 \kappa \int_s^t \int_{\mathbb{R}^2} g^{\alpha \beta} \frac{\partial v }{\partial X_\alpha} \frac{ \partial v }{\partial X_\beta} \sqrt{G} { \ }dX { \ }d \tau\\
 = \Vert G^{1/4} v(s ) \Vert_{L^2(\mathbb{R}^2 )}^2 + \int_s^t \int_{\mathbb{R}^2} F(\tau) v(\tau) \sqrt{G} { \ }d X d \tau,
\end{multline}
and that
\begin{equation*}
G^{1/4} v, { \ }G^{1/4}Lv,{ \ } G^{1/4}{d v}/{dt} \in  C_{loc}^{ \eta } ((0,\infty); L^2 (\mathbb{R}^2)).
\end{equation*}
\end{theorem}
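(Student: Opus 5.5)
The plan is to treat $-L$ on $L^2(\mathbb{R}^2)$ as a small perturbation of $\kappa\Delta_X$. This shows that $-L$ generates an analytic semigroup. Existence, uniqueness and H\"{o}lder regularity of the strong solution then follow from the classical theory for inhomogeneous abstract Cauchy problems with H\"{o}lder continuous forcing (Pazy, Chapter 4). The energy equality comes from testing the equation against $v\sqrt{G}$.

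\textbf{Step 1 (structure of $L$).} Expanding the divergence gives
\begin{equation*}
Lf = -\kappa g^{\alpha\beta}\partial_{X_\alpha}\partial_{X_\beta} f - \kappa\Big(\partial_{X_\alpha} g^{\alpha\beta} + \tfrac{1}{2G}(\partial_{X_\alpha}G) g^{\alpha\beta}\Big)\partial_{X_\beta} f .
\end{equation*}
Since $g^{\alpha\beta} = \delta_{\alpha\beta} - (\partial_{X_\alpha}\varphi)(\partial_{X_\beta}\varphi)/G$, we can write
\begin{equation*}
L = -\kappa\Delta_X + B_2 + B_1 ,
\end{equation*}
where $B_2$ is second order with $L^\infty$-coefficients bounded by $\Vert\nabla_X\varphi\Vert_{L^\infty}^2 \le 1/16$, and $B_1$ is first order with bounded coefficients by Lemma \ref{lem21}. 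Using $\Vert\nabla_X^2 f\Vert_{L^2} = \Vert\Delta_X f\Vert_{L^2}$ (Fourier), we get
\begin{equation*}
\Vert B_2 f\Vert_{L^2} \le c\,\kappa\,(1/16)\,\Vert\Delta_X f\Vert_{L^2},
\end{equation*}
with $c$ a small absolute constant, so $c/16<1$. Also $\Vert B_1 f\Vert_{L^2}\le \varepsilon\Vert\Delta_X f\Vert_{L^2}+C_\varepsilon\Vert f\Vert_{L^2}$ by interpolation. The total relative bound is therefore $<1$, and the perturbation theorem for analytic generators (Pazy Thm 3.2.1 or Engel--Nagel III.2.10) applies. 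It gives that $-L$ with $D(L)=W^{2,2}(\mathbb{R}^2)$ generates an analytic semigroup on $L^2(\mathbb{R}^2)$, and that the graph norm of $L$ is equivalent to the $W^{2,2}$ norm. This smallness bookkeeping, checking that the constant $1/4$ indeed makes the relative bound $<1$, is the main obstacle. If it fails with the crude constants, I would instead use the weighted space $\mathscr{L}^2(\Gamma)$. There $L$ is symmetric: by Lemma \ref{lem22}, $\langle Lf,h\sqrt{G}\rangle = \kappa\int g^{\alpha\beta}\partial_\alpha f\,\partial_\beta h\sqrt{G}$. It is also nonnegative, since $g^{\alpha\beta}$ is positive definite. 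Closing via the Friedrichs extension then yields a self-adjoint nonnegative operator, hence an analytic contraction semigroup. The small-slope condition is then needed only to identify the domain with $W^{2,2}$ through the elliptic estimate $\Vert f\Vert_{W^{2,2}}\le C(\Vert Lf\Vert_{L^2}+\Vert f\Vert_{L^2})$, obtained as above.

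\textbf{Step 2 (existence, uniqueness, regularity).} With analyticity in hand, define
\begin{equation*}
v(t) = e^{-tL}v_0 + \int_0^t e^{-(t-s)L}F(s)\,ds .
\end{equation*}
Since $F\in C^\eta_{loc}((0,\infty);L^2)$, Pazy Thm 4.3.2 and Cor.\ 4.3.3 (applied on each interval $[\epsilon,T]$, with $v(\epsilon)$ as initial datum) give that $v$ is a strong solution. They also give $v\in C((0,\infty);D(L))\cap C^1((0,\infty);L^2)$, with $Lv$ and $v'$ locally $\eta$-H\"{o}lder. Since $v_0\in L^2$, we have $v\in C([0,\infty);L^2)$. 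Uniqueness follows because the difference of two solutions solves the homogeneous problem with zero data, and hence vanishes by the semigroup property. Multiplication by $G^{1/4}$ is a bounded operator on $L^2$ (Lemma \ref{lem21}), so the H\"{o}lder statements transfer to $G^{1/4}v$, $G^{1/4}Lv$ and $G^{1/4}v'$. The H\"{o}lder continuity of $v$ itself follows from $v\in C^1$ on compact subintervals.

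\textbf{Step 3 (energy equality).} For $0<s<t$, the map $\tau\mapsto\Vert G^{1/4}v(\tau)\Vert^2_{L^2}$ is $C^1$, with derivative
\begin{equation*}
2\int v' v\sqrt{G}\,dX = 2\int (F-Lv)v\sqrt{G}\,dX .
\end{equation*}
Integrating by parts, justified first for $v\in W^{2,2}$ via density of $C_c^\infty$, gives
\begin{equation*}
\int Lv\, v\sqrt{G}\,dX = \kappa\int g^{\alpha\beta}\partial_\alpha v\,\partial_\beta v\sqrt{G}\,dX .
\end{equation*}
Integrating in time yields \eqref{eq26} for $s>0$. For the case $s=0$, let $s\downarrow0$. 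The left-hand side converges by continuity of $v$ in $L^2$ at $0$. The dissipation integral converges by monotone convergence, and it is finite since all other terms are bounded; in particular the forcing term is controlled by $\Vert F\Vert_{L^2L^2}\Vert v\Vert_{L^\infty_tL^2}$ on $[0,t]$.
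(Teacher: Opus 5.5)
Your proposal follows the paper's proof essentially step by step. The paper also splits $L=A+B$, bounds the second-order part of $B$ by $\Vert\nabla_X\varphi\Vert_{L^\infty}^2\Vert Af\Vert_{L^2}\le\frac{1}{16}\Vert Af\Vert_{L^2}$ via Lemma \ref{lem34}, and absorbs the first-order part by interpolation before invoking analytic perturbation theory. It then takes the Duhamel solution and gets the H\"older regularity from analytic-semigroup theory (Lemma \ref{lem82} is the Pazy/Lunardi argument you cite). The energy identity comes from testing with $v\sqrt{G}$ and \eqref{eq35}. Your passage $s\downarrow0$ is more careful than the paper's. A minor point: the theorems you cite (Pazy 3.2.1, Engel--Nagel III.2.10) need the relative bound below a threshold depending on $A$, not merely $<1$. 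For the self-adjoint $-\kappa\Delta_X$ any bound $<1$ does suffice, and yours is about $1/16$, so the Friedrichs fallback is unnecessary.

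There is one place where you claim more than your computation gives. Integrating your own formula $\frac{d}{d\tau}\Vert G^{1/4}v\Vert_{L^2}^2=2\int(F-Lv)v\sqrt{G}\,dX$ from $s$ to $t$ yields
\begin{multline*}
\Vert G^{1/4}v(t)\Vert_{L^2(\mathbb{R}^2)}^2+2\kappa\int_s^t\int_{\mathbb{R}^2}g^{\alpha\beta}\frac{\partial v}{\partial X_\alpha}\frac{\partial v}{\partial X_\beta}\sqrt{G}\,dX\,d\tau\\
=\Vert G^{1/4}v(s)\Vert_{L^2(\mathbb{R}^2)}^2+2\int_s^t\int_{\mathbb{R}^2}Fv\sqrt{G}\,dX\,d\tau .
\end{multline*}
So the forcing term carries a factor $2$, not the coefficient $1$ printed in \eqref{eq26}. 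The printed identity is false in general. Suppose it held for all $0\le s<t$ together with the identity above. Subtracting gives $\int_s^t\int Fv\sqrt{G}\,dX\,d\tau=0$ for all $s<t$. Then for $v_0=0$ the correct identity forces $v\equiv0$, hence $F\equiv0$. The paper's proof has the same slip, since it only says ``by the same argument''. You should therefore state explicitly that what you prove, correctly, is the energy equality with $2\int_s^t\int Fv\sqrt{G}$, rather than asserting \eqref{eq26} verbatim.
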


\begin{theorem}[Maximal $L^2$-Regularity]\label{thm26}
Let $v$ be the solution of system \eqref{eq24} under the assumptions of Theorem \ref{thm25}. Let $C_\star = C_\star (\kappa ) >0$ be the constant appearing in \eqref{eq32}. Assume that $v_0 \in W^{1,2}(\mathbb{R}^2)$ and that 
\begin{equation*}
\Vert \nabla_X \varphi \Vert_{L^\infty (\mathbb{R}^2)} \leq \frac{1} {4 \sqrt{C_\star}}.
\end{equation*}
Then the four assertions hold:\\
\noindent $(\rm{i})$ There is $C_1 = C_1 (\kappa, \Vert \nabla_X^2 \varphi \Vert_{L^\infty (\mathbb{R}^2)} ) >0$ such that for each $T>0$
\begin{multline*}
\Vert G^{1/4} {dv}/{dt} \Vert_{L^2(0,T; L^2(\mathbb{R}^2))}+ \Vert G^{1/4} L v \Vert_{L^2(0,T; L^2(\mathbb{R}^2))}\\
 \leq C_1 \Vert v_0 \Vert_{\mathscr{W}^{1,2}(\Gamma)} + C_1 (1 + \sqrt{T}) \Vert G^{1/4} F \Vert_{L^2(0,T; L^2(\mathbb{R}^2))}.
\end{multline*}
\noindent $(\rm{ii})$ Assume in addition that there is $Q = { }^t (Q_1,Q_2,Q_3) \in L^2(0,\infty; W^{1,2}(\mathbb{R}^2))$ such that
\begin{equation*}
F = g^{\alpha \beta}g_\beta \cdot \frac{\partial Q}{\partial X_\alpha} \text{ on } (0,\infty),
\end{equation*}
and that
\begin{equation*}
Q \cdot g_1 \times g_2 =0 \text{ on } (0,\infty).
\end{equation*}
Then there is $C_2 = C_2 (\kappa , \Vert \nabla_X^2 \varphi \Vert_{L^\infty (\mathbb{R}^2)}  ) >0$ such that
\begin{multline}\label{eq27}
\Vert G^{1/4}{dv}/{dt} \Vert_{L^2(0,\infty; L^2(\mathbb{R}^2))}+ \Vert G^{1/4} L v \Vert_{L^2(0,\infty; L^2(\mathbb{R}^2))}\\
 \leq C_2 (\Vert v_0 \Vert_{\mathscr{W}^{1,2}(\Gamma)} + \Vert G^{1/4} F \Vert_{L^2(0,\infty; L^2(\mathbb{R}^2))} + \Vert G^{1/4} Q \Vert_{L^2(0,\infty; L^2(\mathbb{R}^2))}).
\end{multline}
\noindent $(\rm{iii})$ Assume in addition that $F \equiv 0$. Then 
\begin{equation}\label{eq28}
\Vert G^{1/4}{dv}/{dt} \Vert_{L^2(0,\infty; L^2(\mathbb{R}^2))}+ \Vert G^{1/4} L v \Vert_{L^2(0,\infty; L^2(\mathbb{R}^2))} \leq C_2 \Vert v_0 \Vert_{\mathscr{W}^{1,2}(\Gamma)}.
\end{equation}
Here $C_2$ is the positive constant appearing in \eqref{eq27}.\\
\noindent $(\rm{iv})$ Assume in addition that $F \in L^2 ( \mathbb{R}^2 \times (0,\infty) ) $. Then for each fixed $T \in (0,\infty)$
\begin{equation*}
v \in L^2 ( \mathbb{R}^2 \times (0,T) ).
\end{equation*}
\end{theorem}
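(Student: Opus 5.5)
We plan to prove Theorem \ref{thm26} by treating $L$ as a perturbation of the Laplacian. The energy structure is carried by the weighted operator $\mathcal{L}=\sqrt{G}L$. Write
\begin{equation*}
L f = -\kappa g^{\alpha\beta}\partial_{X_\alpha}\partial_{X_\beta} f - \frac{\kappa}{\sqrt{G}}\partial_{X_\alpha}(\sqrt{G} g^{\alpha\beta})\partial_{X_\beta} f = -\kappa\Delta_X f + B_1 f + B_2 f,
\end{equation*}
where $B_1 f=-\kappa(g^{\alpha\beta}-\delta^{\alpha\beta})\partial_{X_\alpha}\partial_{X_\beta}f$ is a second-order term with small coefficients. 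Since $|g^{\alpha\beta}-\delta^{\alpha\beta}|\le 2\Vert\nabla_X\varphi\Vert_{L^\infty}^2$, it obeys $\Vert B_1 f\Vert_{L^2}\le C\Vert\nabla_X\varphi\Vert_{L^\infty}^2\Vert\nabla_X^2 f\Vert_{L^2}$. The term $B_2$ is first order, with coefficients bounded by Lemma \ref{lem21} in terms of $\Vert\nabla^2_X\varphi\Vert_{L^\infty}$.

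The constant $C_\star$ in \eqref{eq32} should be the maximal $L^2$-regularity constant of the heat equation on $\mathbb{R}^2$. For $\partial_t w-\kappa\Delta_X w=h$ with $w(0)=w_0$, we expect
\begin{equation*}
\Vert \partial_t w\Vert_{L^2(0,T;L^2)}+\Vert\kappa\nabla_X^2 w\Vert_{L^2(0,T;L^2)}\le \sqrt{C_\star}\,(\Vert\nabla_X w_0\Vert_{L^2}+\Vert h\Vert_{L^2(0,T;L^2)}),
\end{equation*}
which follows from Plancherel since $\Vert\Delta_X w\Vert=\Vert\nabla_X^2w\Vert$ on $\mathbb{R}^2$.

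For (i), apply this estimate to $v$ with $h=F-B_1v-B_2v$. The $B_1$ term is absorbed: under $\Vert\nabla_X\varphi\Vert_{L^\infty}\le 1/(4\sqrt{C_\star})$, its contribution is at most $\tfrac18\Vert\kappa\nabla_X^2 v\Vert$. The $B_2$ term is bounded by $C(\Vert\nabla^2_X\varphi\Vert_{L^\infty})\Vert\nabla_X v\Vert_{L^2(0,T;L^2)}$. We control this by the energy equality \eqref{eq26} with $s=0$, using Lemma \ref{lem24} and the ellipticity $g^{\alpha\beta}\xi_\alpha\xi_\beta\ge|\xi|^2/G$:
\begin{equation*}
\kappa\int_0^T\Vert\nabla_X v\Vert^2\,d\tau\le C\Big(\Vert v_0\Vert_{L^2}^2+\int_0^T\Vert F\Vert\,\Vert v\Vert\,d\tau\Big),\qquad \sup_{t\le T}\Vert v(t)\Vert\le C\big(\Vert v_0\Vert+\sqrt{T}\,\Vert F\Vert_{L^2(0,T;L^2)}\big).
\end{equation*}
This is where the factor $1+\sqrt{T}$ comes from. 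We then return from $\nabla_X^2 v$ to $Lv$ using $\Vert Lv\Vert\le C\Vert v\Vert_{W^{2,2}}$, and pass between $L^2$ and the $G^{1/4}$-weighted norms via Lemma \ref{lem24}. The computation is rigorous for $v\in C((0,T);W^{2,2})$; near $t=0$ we work on $(\epsilon,T)$ and let $\epsilon\to0$, using $v_0\in W^{1,2}$. Alternatively one approximates $v_0$ by smoother data.

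For (ii), the only $T$-dependence came from $\int_0^T\Vert F\Vert\,\Vert v\Vert$. With $F=g^{\alpha\beta}g_\beta\cdot\partial_{X_\alpha}Q$ and $Q\cdot(g_1\times g_2)=0$, $Q$ is tangential, so $F$ is the surface divergence $\nabla_\Gamma\cdot\mathcal{Q}$. By the surface divergence theorem of Section \ref{sect3} (Lemma \ref{lem22}(ii) together with tangentiality, which kills the mean-curvature term), $\int F v\sqrt{G}\,dX=-\int Q\cdot\nabla_\Gamma v\sqrt{G}\,dX$. Hence, by Young's inequality,
\begin{equation*}
\int_0^t\!\!\int F v\sqrt{G}\le \kappa\int_0^t\Vert v\Vert^2_{\dot{\mathscr{W}}^{1,2}}+C\Vert G^{1/4}Q\Vert^2_{L^2(0,\infty;L^2)},
\end{equation*}
which gives a $T$-independent bound on $\int_0^\infty\Vert\nabla_X v\Vert^2$. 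Rerunning the argument of (i) on $(0,\infty)$ yields \eqref{eq27}, and (iii) is the special case $Q=0$. The main technical obstacle is justifying this integration by parts, namely the decay of $v$ and $Q$ at infinity and the tangential identity; we would prove it first for smooth compactly supported functions and extend by density in $W^{1,2}$.

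For (iv), by Appendix the space $L^2(\mathbb{R}^2\times(0,T))$ requires joint measurability. We would show $v$ is the $L^2(0,T;L^2)$-limit of continuous functions, which is automatic from $v\in C([0,T);L^2)$ via Riemann-type step approximations. Alternatively, $F\in L^2(\mathbb{R}^2\times(0,\infty))$ lets us identify the Bochner solution with a measurable function on $\mathbb{R}^2\times(0,T)$ by Fubini. The bound $\int_0^T\Vert v\Vert^2\le T\sup\Vert v\Vert^2<\infty$ then finishes the proof.
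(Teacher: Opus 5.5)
Your plan for (i)--(iii) is the paper's. You split $L=A+B$ as in Lemma \ref{lem43} and absorb the second-order part. Note that \eqref{eq32} carries $C_\star$, not your guessed $\sqrt{C_\star}$; with the sharp bound $\Vert B_1 f\Vert_{L^2}\le \Vert\nabla_X\varphi\Vert_{L^\infty}^2\Vert Af\Vert_{L^2}$ from Lemma \ref{lem34}, absorption works because $C_\star\Vert\nabla_X\varphi\Vert_{L^\infty}^2\le 1/16$. You then bound the first-order part by $\Vert\nabla_X v\Vert_{L^2(0,T;L^2)}$, control that with the energy equality, and remove the $T$-dependence in (ii) with \eqref{eq36}.

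The gap is the step you wave through at $t=0$. Applying \eqref{eq32} to $dv/dt+Av=F-Bv$ and absorbing only makes sense if you already know $Bv\in L^2(0,T;L^2(\mathbb{R}^2))$; otherwise the absorption reads $\infty\le\infty$. Theorem \ref{thm25} gives only $v\in C((0,\infty);W^{2,2})$, so $\Vert Bv(t)\Vert$ could blow up non-square-integrably as $t\to 0$. Working on $(\epsilon,T)$ puts $C_\star\Vert v(\epsilon)\Vert_{W^{1,2}}$ on the right-hand side, and you have no bound on it uniform in $\epsilon$. You know $v\in C([0,\infty);L^2)$ and $\int_0^T\Vert\nabla_X v\Vert^2\,d\tau<\infty$, but neither prevents $\Vert\nabla_X v(\epsilon)\Vert\to\infty$. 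Boundedness of $v$ in $W^{1,2}$ near $t=0$ is the trace property of the maximal-regularity class, i.e.\ part of what you are proving. Approximating only $v_0$ by smoother data does not help: the Duhamel term $\int_0^t{\rm e}^{-(t-\tau)L}F(\tau)\,d\tau$, with $F$ merely in $L^2(0,T;L^2)$ near $t=0$, has exactly the same difficulty.

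The paper closes this as a separate claim, \eqref{eq52}. It solves $dv_{m+1}/dt+Av_{m+1}=F-Bv_m$ on a short interval $(0,T_{**})$. It shows the iterates contract in the norm $\Vert\Phi\Vert_{L^\infty(0,T;L^2)}+\Vert d\Phi/dt\Vert_{L^2(0,T;L^2)}+\Vert A\Phi\Vert_{L^2(0,T;L^2)}$, using smallness of $\nabla_X\varphi$ for the second-order part and shortness of the interval for the lower-order part. It then identifies the limit with $v$ by uniqueness and uses $v\in C([T_{**},T];W^{2,2})$ afterwards.

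You could instead repair your argument by approximating \emph{both} data. Take $v_0^k\in W^{2,2}$ with $v_0^k\to v_0$ in $W^{1,2}$, and $F^k\in C^1([0,T];L^2)$ with $F^k\to F$ in $L^2(0,T;L^2)$. Then $v^k$ is a classical solution with $v^k\in C([0,T];W^{2,2})$, and your a priori estimate applies to $v^k$ and, by linearity, to $v^k-v^j$. Since $v^k\to v$ in $C([0,T];L^2)$ by \eqref{eq41} and Duhamel, the estimate passes to $v$ by closedness of $L$.

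Two smaller points. Do not return to $Lv$ via $\Vert Lv\Vert\le C\Vert v\Vert_{W^{2,2}}$: the $L^2$ part brings in $\Vert v\Vert_{L^2(0,T;L^2)}$, which grows with $T$ and ruins (ii)--(iii). Use $\Vert Lv\Vert\le\Vert Av\Vert+\Vert Bv\Vert$ with \eqref{eq47} instead, since $L$ has no zeroth-order term. Also, in (ii) run the argument on $(0,T)$ with $T$-independent constants and then let $T\to\infty$, rather than working directly on $(0,\infty)$ where you again lack a priori finiteness.

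Your argument for (iv) is correct: you identify $L^2(0,T;L^2(\mathbb{R}^2))$ with $L^2(\mathbb{R}^2\times(0,T))$ and use $\sup_{t\le T}\Vert v(t)\Vert<\infty$. It is much simpler than the paper's heat-kernel iteration and does not even need the extra hypothesis on $F$.
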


\begin{theorem}[Stability]\label{thm27}
Let $v$ be the solution of system \eqref{eq24} under the assumptions of Theorem \ref{thm25}. Let $C_\star = C_\star (\kappa ) >0$ be the constant appearing in \eqref{eq32}. Assume that $F \equiv 0$ and that
\begin{equation*}
\Vert \nabla_X \varphi \Vert_{L^\infty (\mathbb{R}^2)} \leq \frac{1} {4 \sqrt{C_\star}}.
\end{equation*}
Then
\begin{equation}\label{eq29}
\lim_{ t \to \infty } \Vert G^{1/4} v (t) \Vert_{L^2(\mathbb{R}^2)} = 0 .
\end{equation}
\end{theorem}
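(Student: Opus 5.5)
We prove \eqref{eq29} by combining the energy equality \eqref{eq26} (with $F\equiv0$) with the global maximal regularity estimate \eqref{eq28} of Theorem \ref{thm26}(iii). Throughout, $\Vert\nabla_X\varphi\Vert_{L^\infty}\le 1/(4\sqrt{C_\star})$, which we may assume is at most $1/4$. Hence Lemma \ref{lem24} applies and $\Vert\cdot\Vert_{\mathscr{L}^2(\Gamma)}$ is equivalent to $\Vert\cdot\Vert_{L^2(\mathbb{R}^2)}$.

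\textbf{Step 1: reduce to $v_0\in W^{1,2}(\mathbb{R}^2)$.} By \eqref{eq26} with $F\equiv 0$, the map $t\mapsto \Vert G^{1/4}v(t)\Vert_{L^2}$ is nonincreasing. Moreover, $\Vert G^{1/4}v(t)\Vert_{L^2}\le \Vert G^{1/4}v_0\Vert_{L^2}$, so the solution map $v_0\mapsto v(t)$ is a contraction in the $\mathscr{L}^2(\Gamma)$ norm, uniformly in $t$. Since $W^{1,2}(\mathbb{R}^2)$ is dense in $L^2(\mathbb{R}^2)$, an $\varepsilon/2$ argument reduces \eqref{eq29} to initial data $v_0\in W^{1,2}(\mathbb{R}^2)$: approximate $v_0$ by $w_0\in W^{1,2}$ with $\Vert G^{1/4}(v_0-w_0)\Vert_{L^2}<\varepsilon/2$. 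By linearity and uniqueness from Theorem \ref{thm25}, the difference of the two solutions is the solution with datum $v_0-w_0$, so it stays below $\varepsilon/2$ for all $t$.

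\textbf{Step 2: $v_0\in W^{1,2}$.} By \eqref{eq28}, $dv/dt\in L^2(0,\infty;L^2(\mathbb{R}^2))$. From \eqref{eq26} with $s=0$ and $F\equiv 0$, we also get
\[
2\kappa\int_0^\infty \Vert v(\tau)\Vert_{\dot{\mathscr{W}}^{1,2}(\Gamma)}^2\,d\tau\le \Vert v_0\Vert_{\mathscr{L}^2(\Gamma)}^2,
\]
so $\nabla_X v\in L^2(0,\infty;L^2)$ by Lemma \ref{lem24}. Now use the energy identity in differential form:
\[
\frac{d}{dt}\Vert G^{1/4}v\Vert_{L^2}^2=-2\kappa\Vert v\Vert_{\dot{\mathscr{W}}^{1,2}(\Gamma)}^2 .
\]
This alone gives only integrability of the dissipation, not decay of $v$ itself. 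The issue is that $\mathbb{R}^2$ has no Poincaré inequality, so the $L^2$ norm need not be controlled by the gradient.

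\textbf{Step 3: the key point.} Since $\Vert G^{1/4}v(t)\Vert$ is monotone, it has a limit $\ell\ge 0$, and we must show $\ell=0$. The plan is to use a duality/weak-convergence argument. First, for a.e.\ $s$, $v(s)\in W^{2,2}$, and at all $t>0$ we have $\Vert \nabla v(t)\Vert_{L^2}\to 0$ as $t\to\infty$. This holds because the map $t\mapsto \Vert v(t)\Vert_{\dot{\mathscr W}^{1,2}}^2$ is integrable, and it is also nonincreasing: indeed
\[
\frac{d}{dt}\Vert v\Vert_{\dot{\mathscr W}^{1,2}}^2=2\int \frac{dv}{dt}\,\mathcal{L}v\,dX=-\frac{2}{\kappa}\int \Big|\frac{dv}{dt}\Big|^2\sqrt{G}\,dX\le 0,
\]
where we use $\mathcal{L}v=\sqrt G Lv=-\sqrt G\, dv/dt$. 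Next, test against fixed $\phi\in C_c^\infty(\mathbb{R}^2)$:
\[
\frac{d}{dt}\int v\phi\sqrt G\,dX=-\kappa\int g^{\alpha\beta}\partial_\alpha v\,\partial_\beta\phi\,\sqrt G\,dX .
\]
The right-hand side decays like $\Vert\nabla v(t)\Vert$. The main obstacle is turning this into $\ell=0$, since gradient decay alone does not force $L^2$ decay on $\mathbb{R}^2$. I would compare with the flat heat semigroup. Write
\[
\frac{dv}{dt}-\kappa\Delta_X v=\kappa\,\partial_\alpha\big((\sqrt G g^{\alpha\beta}-\delta^{\alpha\beta})\partial_\beta v\big)+(1-\sqrt G)\frac{dv}{dt}.
\]
Apply Duhamel with ${\rm e}^{t\kappa\Delta_X}$, whose $L^2$-decay for fixed data and $L^2$-in-time smoothing are standard (the constant $C_\star$ of \eqref{eq32} presumably quantifies the flat maximal regularity). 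The perturbation coefficients are $O(\Vert\nabla\varphi\Vert_\infty^2)$. Both forcing terms lie in $L^2(0,\infty)$: the divergence term through $\nabla v\in L^2_tL^2$ together with ${\rm e}^{t\Delta}\partial$ smoothing, and the other through \eqref{eq28}. The tail contributions $\int_{T}^{t}$ are then small, giving $\limsup\Vert v(t)\Vert\le C\delta\,\ell$-type bounds. Smallness of the slope, $\Vert\nabla_X\varphi\Vert_\infty\le 1/(4\sqrt{C_\star})$, closes this absorption and yields $\ell=0$.
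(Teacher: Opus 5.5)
Your Steps 1 and 2 are fine. So is the first half of Step 3: $t\mapsto\Vert v(t)\Vert_{\dot{\mathscr W}^{1,2}(\Gamma)}^2$ is nonincreasing, with derivative $\frac{2}{\kappa}\int \frac{dv}{dt}\,\mathcal L v\,dX=-\frac{2}{\kappa}\int\vert\frac{dv}{dt}\vert^2\sqrt G\,dX$ (you dropped the factor $1/\kappa$), and it is integrable, so $\Vert\nabla_X v(t)\Vert_{L^2}\le Ct^{-1/2}\Vert v_0\Vert_{L^2}$.

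\textbf{The gap is the closing comparison with the flat heat flow.}
\begin{itemize}
\item The divergence term is harmless. The energy estimate for $Z'-\kappa\Delta_X Z=\nabla_X\cdot h$, $Z(T)=0$, gives $\sup_t\Vert Z(t)\Vert_{L^2}\le C\Vert h\Vert_{L^2(T,\infty;L^2)}$, which is small for large $T$.
\item The term $(1-\sqrt G)\,dv/dt$ is not in divergence form. The only information you use about it is membership in $L^2(0,\infty;L^2(\mathbb R^2))$ from \eqref{eq28}, and that gives no uniform-in-time $L^2$ bound on its Duhamel integral. The semigroup ${\rm e}^{t\kappa\Delta_X}$ has no spectral gap on $\mathbb R^2$, and the general bound is only $\sqrt{t-T}\,\Vert g\Vert_{L^2(T,t;L^2)}$.
\item This growth really occurs. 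Take a forcing $h$ held constant on an interval of length $S$, with $\widehat h$ concentrated where $\kappa\vert\xi\vert^2 S\le 1$. Its input norm is $\sqrt S\Vert h\Vert$, but its output has size comparable to $S\Vert h\Vert$. Maximal regularity controls $dZ/dt$ and $\Delta_X Z$ in $L^2_tL^2_x$, not $\sup_t\Vert Z(t)\Vert_{L^2}$.
\item Restoring divergence form does not help. You could write $(1-\sqrt G)\frac{dv}{dt}=\frac{1-\sqrt G}{\sqrt G}\,\kappa\,\partial_\alpha(\sqrt G g^{\alpha\beta}\partial_\beta v)$, or integrate by parts in $s$. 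Either way you are left with terms such as $\kappa\,\partial_\alpha\big(\frac{1-\sqrt G}{\sqrt G}\big)\sqrt G g^{\alpha\beta}\partial_\beta v$ or $\kappa\nabla_X\cdot(v\nabla_X\sqrt G)$. Their coefficients involve $\nabla_X^2\varphi$, which is not small under your hypothesis. With only $\Vert\nabla_X v(t)\Vert=O(t^{-1/2})$ or $v\in L^\infty(0,\infty;L^2)$ available, the bounds on their Duhamel contributions grow like $\sqrt t$.
\end{itemize}
So the inequality $\limsup_{t\to\infty}\Vert v(t)\Vert\le C\delta\ell$ is never derived. Nothing in the sketch shows how smallness of the slope would absorb terms whose size is governed by curvature.

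\textbf{The missing idea is structural rather than perturbative.} The paper uses that $\mathcal L=\sqrt G L$ is selfadjoint with $\langle\phi,\mathcal L\phi\rangle_{L^2(\mathbb R^2)}\ge\frac{\kappa}{\sqrt3}\Vert\nabla_X\phi\Vert_{L^2}^2$. Hence $\mathcal N(\mathcal L^*)=\{0\}$ and $\mathcal R(\mathcal L)$ is dense (Proposition \ref{prop61}). It then approximates $\sqrt G v_0$ by $\mathcal L W_0$ with $W_0\in W^{2,2}(\mathbb R^2)$, so that $\Vert v_0-LW_0\Vert_{L^2}$ is small, and it stays small under ${\rm e}^{-tL}$ by \eqref{eq41}. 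For the datum $LW_0$, \eqref{eq41} and \eqref{eq28} give $\Vert{\rm e}^{-tL}LW_0\Vert_{L^2}\le 3^{1/4}t^{-1/2}\Vert L{\rm e}^{-\tau L}W_0\Vert_{L^2(0,t;L^2)}\le Ct^{-1/2}\Vert W_0\Vert_{W^{1,2}}$. This is where the bound $1/(4\sqrt{C_\star})$ is used.

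Your own gradient decay can be completed in the same spirit.
\begin{itemize}
\item By \eqref{eq35}, $\int v(t)\,\mathcal L W\,dX=\kappa\int g^{\alpha\beta}\partial_\alpha W\,\partial_\beta v(t)\sqrt G\,dX\to0$ for every $W\in W^{2,2}(\mathbb R^2)$.
\item Density of $\mathcal R(\mathcal L)$ and boundedness of $v(t)$ then give $v(t)\rightharpoonup0$ weakly in $L^2$.
\item $L$ is selfadjoint for the inner product $\int fg\sqrt G\,dX$: it is symmetric by \eqref{eq35}, and $\lambda+L$ is onto for large $\lambda$.
\item Therefore $\Vert G^{1/4}v(t)\Vert_{L^2}^2=\int v(2t)\,v_0\sqrt G\,dX\to0$.
\end{itemize}
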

\begin{remark}\label{rem28}
Applying \eqref{eq21}-\eqref{eq23}, we can write \eqref{eq26}, \eqref{eq27}, and \eqref{eq29} as follows:
\begin{equation*}
\Vert v(t) \Vert_{\mathscr{L}^2(\Gamma )}^2 + 2 \kappa \int_s^t \Vert v(\tau) \Vert_{\dot{\mathscr{W} }^{1,2}}^2{ \ }d \tau = \Vert  v(s ) \Vert_{\mathscr{L}^2(\Gamma )}^2 + \int_s^t \int_{\mathbb{R}^2} F v \sqrt{G} { \ }d X d \tau,
\end{equation*}
\begin{multline*}
\Vert {dv}/{dt} \Vert_{L^2(0,\infty; \mathscr{L}^2(\Gamma ))}+ \Vert L v \Vert_{L^2(0,\infty; \mathscr{L}^2(\Gamma))}\\
 \leq C_2 (\Vert v_0 \Vert_{\mathscr{W}^{1,2}(\Gamma)} + \Vert Q \Vert_{L^2(0,\infty; \mathscr{W}^{1,2}(\Gamma )}),
\end{multline*}
and
\begin{equation*}
\lim_{ t \to \infty } \Vert v (t) \Vert_{\mathscr{L}^2(\Gamma)} = 0 .
\end{equation*}
\end{remark}
Combining Lemma \ref{lem23}, \eqref{eq21}-\eqref{eq23}, and Theorems \ref{thm25}-\ref{thm27}\\ with $\delta_* = \min \{ 1/4, 1/(4 \sqrt{C_\star}) \}$, we have Theorem \ref{thm11}. We prove Theorem \ref{thm25} in Section \ref{sect4}, Theorem \ref{thm26} in Section \ref{sect5}, and Theorem \ref{thm27} in Section \ref{sect7}.

%\newpage

\section{Tools}\label{sect3}

In this section, we prepare some tools to derive our results. Let us now recall basic properties of the Laplace operator and the surface divergence theorem. Fix $\kappa >0$ and $\varphi \in BC^2(\mathbb{R}^2)$.

Let us first define the \emph{Laplace operator} $A$ on $L^2 (\mathbb{R}^2)$ by
\begin{equation}\label{eq31}
\begin{cases}
A f = - \kappa \Delta_X f,\\
D (A) = W^{2,2}(\mathbb{R}^2).
\end{cases}
\end{equation}
It is well-known that $A$ has the following nice properties.
\begin{lemma}[Properties of the Laplace operator]\label{lem31}{ \ }\\
$(\rm{i})$ The operator $-A$ generates a contraction $C_0$-semigroup on $L^2 (\mathbb{R}^2)$.\\
$(\rm{ii})$ The operator $-A$ generates a bounded analytic semigroup on $L^2 (\mathbb{R}^2)$.\\
$(\rm{iii})$ $D (A^{1/2}) = W^{1,2}(\mathbb{R}^2)$, and $\Vert A^{1/2} f \Vert_{L^2(\mathbb{R}^2)}^2 = \sqrt{\kappa} \Vert \nabla_X f \Vert_{L^2(\mathbb{R}^2)}^2$ for all $f \in D (A^{1/2})$.\\
$(\rm{iv})$ The operator $A$ has maximal $L^2$-regularity, that is, for each $V_0 \in D (A^{1/2})$ and $F \in L^2(0,\infty; L^2(\mathbb{R}^2))$, there exists a unique function $V$ satisfying system
\begin{equation*}
\begin{cases}
d V/{dt} + A V = F \text{ on }(0,\infty),\\
V \vert_{t=0} = V_0,
\end{cases}
\end{equation*}
and the estimate
\begin{multline}\label{eq32}
\Vert dV/{dt} \Vert_{L^2(0,\infty;L^2(\mathbb{R}^2)) } + \Vert A V \Vert_{L^2(0,\infty;L^2(\mathbb{R}^2)) }\\
 \leq C_\star (\Vert V_0 \Vert_{W^{1,2} (\mathbb{R}^2) } + \Vert F \Vert_{L^2(0,\infty;L^2(\mathbb{R}^2)) }),
\end{multline}
where $C_\star = C_\star (\kappa) >0$ independent of $(V_0,F)$. Moreover, if\\ $F \in C_{loc}^\eta ((0, \infty); L^2(\mathbb{R}^2) )$ for some $0 < \eta <1$, then
\begin{equation}\label{eq33}
V, dV/{dt}, A V \in C_{loc}^{ \eta }((0,\infty) ; L^2(\mathbb{R}^2)).
\end{equation}
$(\rm{v})$ Set $\mathcal{E}[t] = \mathcal{E} (X ,t) = {\rm{exp}}(- \vert X \vert^2/{4\kappa t})/{ 4 \pi \kappa t}$. Then ${\rm{e}}^{- tA} f = \mathcal{E}[t]*f$ for each $f \in L^2(\mathbb{R}^2)$.
\end{lemma}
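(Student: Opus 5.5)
The plan is to diagonalize $A$ with the Fourier transform $\mathcal{F}_X$ on $L^2(\mathbb{R}^2)$ and read off each assertion. By Plancherel, $A$ is unitarily equivalent to multiplication by $\kappa\vert\xi\vert^2$ on the domain $\{ \hat f;\ (1+\vert\xi\vert^2)\hat f \in L^2\}$, and this domain is exactly $\mathcal{F}_X(W^{2,2}(\mathbb{R}^2))$. Hence $A$ is selfadjoint and nonnegative.

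\emph{Steps (i)--(iii).} Define ${\rm e}^{-tA}$ as multiplication by ${\rm e}^{-\kappa t\vert\xi\vert^2}$.
\begin{itemize}
\item For (i), the multiplier is bounded by $1$, which gives the contraction property. Strong continuity at $t=0$ follows from dominated convergence applied to $\vert({\rm e}^{-\kappa t\vert\xi\vert^2}-1)\hat f\vert^2$.
\item For (ii), the multiplier extends holomorphically to ${\rm Re}\,z>0$ with modulus at most $1$. Moreover
\[
\Vert A{\rm e}^{-tA}\Vert \leq \sup_{r\geq 0} r{\rm e}^{-tr} = 1/(et),
\]
which is the analyticity estimate. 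Alternatively, one can simply invoke the spectral theorem for nonnegative selfadjoint operators.
\item For (iii), $A^{1/2}$ is multiplication by $\sqrt{\kappa}\vert\xi\vert$. Its domain is $\{(1+\vert\xi\vert)\hat f\in L^2\}=\mathcal{F}_X(W^{1,2})$. By Plancherel, $\Vert A^{1/2}f\Vert^2=\kappa\Vert\nabla_X f\Vert^2$. This is the correct constant; I would note that the factor $\sqrt{\kappa}$ in the statement should be checked against this normalization, since only comparability of the two norms is used later.
\end{itemize}

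\emph{Step (v).} This is the classical computation of the Fourier transform of a Gaussian: $\mathcal{F}_X^{-1}[{\rm e}^{-\kappa t\vert\xi\vert^2}]=\mathcal{E}[t]$. The convolution theorem then gives ${\rm e}^{-tA}f=\mathcal{E}[t]*f$, first for $f$ in the Schwartz class and then for all $f\in L^2$ by density.

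\emph{Step (iv), maximal regularity.} Take the mild solution
\[
V(t)={\rm e}^{-tA}V_0+\int_0^t {\rm e}^{-(t-s)A}F(s)\,ds .
\]
For smooth, compactly supported data, pair the equation with $AV$ to get
\[
\tfrac12\tfrac{d}{dt}\Vert A^{1/2}V\Vert^2+\Vert AV\Vert^2=\langle F,AV\rangle .
\]
Young's inequality and integration over $(0,\infty)$ then give
\[
\Vert AV\Vert_{L^2(0,\infty;L^2)}^2\leq \Vert A^{1/2}V_0\Vert^2+\Vert F\Vert_{L^2(0,\infty;L^2)}^2 .
\]
Since $dV/dt=F-AV$, the time derivative obeys the same bound up to a factor. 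Combined with (iii), this yields \eqref{eq32} with $C_\star$ depending only on $\kappa$, and a density argument removes the smoothness assumption. Uniqueness follows because the difference $W$ of two solutions satisfies
\[
\tfrac{d}{dt}\Vert W\Vert^2=-2\Vert A^{1/2}W\Vert^2\leq 0, \qquad W(0)=0 .
\]

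\emph{Main obstacle: the H\"{o}lder regularity \eqref{eq33}.} I would fix $0<\varepsilon<T$ and restart the problem at time $\varepsilon/2$, using the initial datum $V(\varepsilon/2)\in L^2$.
\begin{itemize}
\item The homogeneous part ${\rm e}^{-(t-\varepsilon/2)A}V(\varepsilon/2)$ is $C^\infty$ on $[\varepsilon,T]$ with values in $D(A)$, by analyticity from (ii).
\item For the inhomogeneous part, $F$ is $\eta$-H\"{o}lder on $[\varepsilon/2,T]$, so the standard theorem for analytic semigroups (Pazy, Thm.~4.3.5 / Lunardi) gives $\int_{\varepsilon/2}^t {\rm e}^{-(t-s)A}F(s)\,ds \in C^{\eta}([\varepsilon,T];D(A))$, with time derivative also in $C^\eta([\varepsilon,T];L^2)$. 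To verify this directly, split $F(s)=(F(s)-F(t))+F(t)$ and use $\Vert A{\rm e}^{-\tau A}\Vert\leq C/\tau$ to control the singular integral.
\end{itemize}
Together these give $V,\,AV,\,dV/dt\in C^\eta_{loc}((0,\infty);L^2)$.
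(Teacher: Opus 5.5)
Your proposal is correct. It differs from the paper mainly in that the paper does not prove Lemma \ref{lem31} itself. It cites semigroup texts for (i), (ii) and (v), Sohr for (iii), de Simon, Denk--Hieber--Pr\"uss and Kunstmann--Weis for \eqref{eq32}, and its own Lemma \ref{lem82} (a Pazy/Lunardi-type H\"older lemma for analytic semigroups) for \eqref{eq33}.

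You instead diagonalize $A$ by the Fourier transform, which yields (i), (ii), (iii) and (v) at once. You then obtain \eqref{eq32} from the energy identity got by testing the equation with $AV$, which uses only that $A$ is selfadjoint and nonnegative. This buys a self-contained argument with an explicit constant, for instance $C_\star=\max\{2\sqrt{\kappa},3\}$. That in turn makes the slope threshold $1/(4\sqrt{C_\star})$ in Theorems \ref{thm26} and \ref{thm27} explicit. The cited general maximal-regularity theory would only be needed for non-selfadjoint or $L^p$ settings. The paper never needs that: it uses maximal regularity of $A$ alone and transfers it to $L$ by perturbation.

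You are also right about (iii). Since $\Vert A^{1/2}f\Vert_{L^2(\mathbb{R}^2)}^2=\langle Af,f\rangle_{L^2(\mathbb{R}^2)}=\kappa\Vert\nabla_X f\Vert_{L^2(\mathbb{R}^2)}^2$, the factor $\sqrt{\kappa}$ in the statement should be $\kappa$ (equivalently, $\Vert A^{1/2}f\Vert=\sqrt{\kappa}\Vert\nabla_X f\Vert$ without squares). This is harmless downstream, because only the equivalence of norms enters \eqref{eq32}.

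For \eqref{eq33}, your argument uses the same mechanism as Lemma \ref{lem82}, but restarting at $\varepsilon/2$ is a genuine improvement. The paper's sketch, in the term $H_3$, integrates over $\tau\in(0,s)$ and uses $\Vert F(\tau)-F(s)\Vert\le C(s-\tau)^\eta$ there. Local H\"older continuity on $(0,\infty)$ does not give this bound near $\tau=0$. Your restart confines the singular integral to an interval on which $F$ is uniformly H\"older, and leaves only a homogeneous term that is smooth on $[\varepsilon,T]$ by analyticity.
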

\noindent See \cite[Section 5 in Chapter VI]{EN00} and \cite[Chapter 7]{Paz83} for the assertions $(\rm{i})$, $(\rm{ii})$, \cite[Chapter II.3]{Soh01} for $({\rm{iii}})$, and \cite{Des64}, \cite{DHP03}, \cite{KW04} for \eqref{eq32}. See Lemma \ref{lem82} in Appendix and \cite[Chapter 4]{Paz83} for \eqref{eq33}.

Next, we study the surface divergence theorem. From \cite{Sim83} and \cite{Kob20}, we have the lemma.
\begin{lemma}[Surface divergence theorem]\label{lem32}{ \ }\\
For every $\Phi = { }^t (\Phi_1,\Phi_2,\Phi_3) \in [W^{1,1} (\Gamma)]^3$,
\begin{equation}\label{eq34}
\int_\Gamma \nabla_\Gamma  \cdot \Phi { \ }d \mathcal{H}^2_x = - \int_{\Gamma} H_\Gamma ( n \cdot \Phi)  { \ }d \mathcal{H}_x^2.
\end{equation}
Here $H_\Gamma = -{\rm{div}}_\Gamma n$ is the mean curvature in the direction $n$, and
\begin{equation*}
W^{1,1}(\Gamma) = \{ \psi : \Gamma \to \mathbb{R}; { \ }\psi(\widehat{x}(X)) \in W^{1,1}(\mathbb{R}^2) \}.
\end{equation*}
\end{lemma}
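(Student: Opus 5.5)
The statement is Lemma \ref{lem32}: for $\Phi\in[W^{1,1}(\Gamma)]^3$ we have $\int_\Gamma \nabla_\Gamma\cdot\Phi\,d\mathcal{H}^2_x=-\int_\Gamma H_\Gamma(n\cdot\Phi)\,d\mathcal{H}^2_x$. I will prove it first for smooth compactly supported fields and then pass to $W^{1,1}$ by density.

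\textbf{Step 1: tangential/normal splitting.} Let $\Phi$ be smooth, with $\widehat{\Phi}$ compactly supported in $\mathbb{R}^2$. Write
\begin{equation*}
\Phi=\Phi_\tau+(n\cdot\Phi)n,\qquad \Phi_\tau:=\Phi-(n\cdot\Phi)n .
\end{equation*}
Since $n\cdot\partial_j^\Gamma=0$ (that is, $\sum_j n_j\partial^\Gamma_j\psi=0$) and $H_\Gamma=-\nabla_\Gamma\cdot n$, the product rule gives
\begin{equation*}
\nabla_\Gamma\cdot\big((n\cdot\Phi)n\big)=(n\cdot\Phi)\,\nabla_\Gamma\cdot n=-H_\Gamma(n\cdot\Phi).
\end{equation*}
Hence it suffices to show $\int_\Gamma\nabla_\Gamma\cdot\Phi_\tau\,d\mathcal{H}^2_x=0$.

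\textbf{Step 2: the tangential part in coordinates.} Since $\Phi_\tau$ is tangent, we can write $\widehat{\Phi}_\tau=w^\beta g_\beta$ with $w^\beta=g^{\alpha\beta}g_\alpha\cdot\widehat{\Phi}$; these coefficients are compactly supported and $C^1$ because $\varphi\in BC^2$. By Lemma \ref{lem22}(ii),
\begin{equation*}
\int_\Gamma\nabla_\Gamma\cdot\Phi_\tau\,d\mathcal{H}^2_x=\int_{\mathbb{R}^2}g^{\alpha\beta}g_\beta\cdot\partial_{X_\alpha}(w^\gamma g_\gamma)\sqrt{G}\,dX .
\end{equation*}
The integrand equals
\begin{equation*}
\partial_{X_\alpha}w^\alpha+w^\gamma\,g^{\alpha\beta}g_\beta\cdot\partial_{X_\alpha}g_\gamma .
\end{equation*}
The second term is $w^\gamma\Gamma^\alpha_{\alpha\gamma}$, and the Christoffel identity $\Gamma^\alpha_{\alpha\gamma}=\partial_{X_\gamma}\sqrt{G}/\sqrt{G}$ holds by Jacobi's formula. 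Therefore the integrand is
\begin{equation*}
\frac{1}{\sqrt{G}}\,\partial_{X_\alpha}\big(\sqrt{G}\,w^\alpha\big)\sqrt{G}=\partial_{X_\alpha}\big(\sqrt{G}\,w^\alpha\big),
\end{equation*}
whose integral over $\mathbb{R}^2$ vanishes because of the compact support.

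\textbf{Step 3: density.} Approximate $\Phi\in[W^{1,1}(\Gamma)]^3$, in the sense that $\widehat{\Phi}\in W^{1,1}(\mathbb{R}^2)$, by compactly supported smooth fields $\widehat{\Phi}_k\to\widehat{\Phi}$ in $W^{1,1}(\mathbb{R}^2)$. Both sides of \eqref{eq34} are continuous in this norm:
\begin{itemize}
\item the left side involves $g^{\alpha\beta}g_\beta\sqrt{G}$, which is bounded by Lemma \ref{lem21};
\item the right side involves $H_\Gamma\sqrt{G}$, which is bounded because $H_\Gamma$ depends only on $\nabla_X\varphi$ and $\nabla_X^2\varphi$.
\end{itemize}
Passing to the limit gives \eqref{eq34} in general.

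\textbf{Main obstacle.} The main obstacle is Step 2. We must check that the coordinate expression in Lemma \ref{lem22}(ii) really collapses to an exact divergence, which requires the $g_\gamma$-derivative terms to match the Christoffel contraction. An alternative is to avoid Christoffel symbols: from Lemma \ref{lem22}(i), the field $\partial_j^\Gamma$ in coordinates is $g^{\alpha\beta}\partial_{X_\alpha}\widehat{x}_j\,\partial_{X_\beta}$. Combining this with $g^{\alpha\beta}g_\beta\cdot g_\gamma=\delta^\alpha_\gamma$, and integrating by parts against $\sqrt{G}$, should reduce everything directly to the identity $\partial_{X_\alpha}(\sqrt{G}g^{\alpha\beta}g_\beta)\cdot g_\gamma$-type cancellations. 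It also requires the product rule for $\nabla_\Gamma\cdot$ at $W^{1,1}$ regularity with $BC^1$ coefficients, which is routine.
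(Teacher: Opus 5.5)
Your argument is correct. It differs from the paper because the paper gives no argument of its own: Lemma~\ref{lem32} is quoted from \cite{Sim83} and \cite{Kob20}, where the divergence theorem is established for general surfaces, and the passage to the class $[W^{1,1}(\Gamma)]^3$ used in the statement is left implicit.

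You instead give a self-contained proof adapted to the graph setting. The normal part yields $-H_\Gamma(n\cdot\Phi)$ by the product rule together with $\sum_j n_j\partial_j^\Gamma=0$. Because $\widehat{x}$ is a single global chart, the tangential part collapses to the exact Euclidean divergence $\partial_{X_\alpha}(\sqrt{G}\,w^\alpha)$, via $g^{\alpha\beta}g_\beta\cdot\partial_{X_\alpha}g_\gamma=\partial_{X_\gamma}\sqrt{G}/\sqrt{G}$ (Jacobi's formula plus $\partial_{X_\alpha}g_\gamma=\partial_{X_\gamma}g_\alpha$). So no partition of unity or intrinsic Riemannian divergence theorem is needed. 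Your density step then shows exactly which hypotheses are used: boundedness of $g^{\alpha\beta}g_\beta\sqrt{G}$ and $H_\Gamma\sqrt{G}$, i.e.\ $\varphi\in BC^2(\mathbb{R}^2)$, with $\nabla_\Gamma\cdot\Phi$ for $\Phi\in[W^{1,1}(\Gamma)]^3$ read through the coordinate formula of Lemma~\ref{lem22}. The citation buys generality (arbitrary surfaces, boundary terms); your route buys a checkable proof that uses only the paper's standing assumptions.

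Three minor points:
\begin{itemize}
\item The hedging under ``Main obstacle'' is unnecessary, since Step~2 as written is already complete.
\item No product rule at $W^{1,1}$ regularity is needed, because Steps~1--2 are only applied to smooth compactly supported fields, where $n$ is of class $C^1$.
\item Since $\Phi_\tau$ is only $C^1$, what Step~2 really uses is the pointwise coordinate identity $\widehat{\nabla_\Gamma\cdot\Psi}=g^{\alpha\beta}g_\beta\cdot\partial_{X_\alpha}\widehat{\Psi}$, valid for $C^1$ fields, rather than Lemma~\ref{lem22}(ii) as stated for $C^\infty(\mathbb{R}^3)$ fields. The smooth approximants in Step~3 can be taken as $\Phi_k(x)=\widehat{\Phi}_k(x_1,x_2)$.
\end{itemize}
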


Applying Lemma \ref{lem32}, we derive the following formulas.
\begin{lemma}\label{lem33}
For every $f \in W^{2,2} (\mathbb{R}^2)$ and $\phi, Q_1,Q_2,Q_3 \in W^{1,2} (\mathbb{R}^2)$,
\begin{equation}\label{eq35}
\int_{\mathbb{R}^2} \bigg\{ \frac{\partial}{\partial X_\alpha} \bigg( \sqrt{G} g^{\alpha \beta} \frac{\partial f }{\partial X_\beta} \bigg) \bigg\} \phi { \ }dX = - \int_{\mathbb{R}^2} g^{\alpha \beta} \frac{\partial f }{\partial X_\alpha} \frac{ \partial \phi }{\partial X_\beta} \sqrt{G} { \ }dX,
\end{equation}
and
\begin{multline}\label{eq36}
\int_{\mathbb{R}^2} \bigg( g^{\alpha \beta}g_\beta \cdot \frac{\partial Q }{\partial X_\alpha} \bigg) \phi \sqrt{G} { \ }dX\\ =- \int_{\mathbb{R}^2} Q \cdot g^{\alpha \beta} g_\alpha \frac{ \partial \phi }{\partial X_\beta} \sqrt{G} { \ }dX + \int_{\mathbb{R}^2} \bigg( g^{\alpha \beta}g_\beta \cdot \frac{\partial \widehat{n}}{\partial X_\alpha} \bigg) ( \widehat{n} \cdot Q ) \phi \sqrt{G} { \ }dX.
\end{multline}
Here $Q = Q(X) = { }^t (Q_1,Q_2,Q_3)$ and $\widehat{n} = \widehat{n}(X) = (g_1 \times g_2)/{\vert g_1 \times g_2 \vert}$.
\end{lemma}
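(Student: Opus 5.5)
We must prove Lemma \ref{lem33}: \eqref{eq35} and \eqref{eq36}. The plan is to handle \eqref{eq35} by ordinary integration by parts in $\mathbb{R}^2$, and \eqref{eq36} by the surface divergence theorem (Lemma \ref{lem32}).

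\emph{Proof of \eqref{eq35}.} First take $f\in C_c^\infty(\mathbb{R}^2)$ and $\phi\in C_c^\infty(\mathbb{R}^2)$.
\begin{itemize}
\item The coefficients $\sqrt{G}g^{\alpha\beta}$ lie in $W^{1,\infty}(\mathbb{R}^2)$, since $\varphi\in BC^2$ and by Lemma \ref{lem21}.
\item Hence $\sqrt{G}g^{\alpha\beta}\partial_{X_\beta}f\,\phi$ is compactly supported and $C^1$. Its integral of $\partial_{X_\alpha}$ vanishes, which is exactly \eqref{eq35}, using the symmetry $g^{\alpha\beta}=g^{\beta\alpha}$.
\item Both sides of \eqref{eq35} are continuous in $f$ for the $W^{2,2}$ norm and in $\phi$ for the $W^{1,2}$ norm. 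For the left side, expand
\[
\partial_{X_\alpha}(\sqrt{G}g^{\alpha\beta}\partial_{X_\beta}f)=\partial_{X_\alpha}(\sqrt{G}g^{\alpha\beta})\,\partial_{X_\beta}f+\sqrt{G}g^{\alpha\beta}\,\partial_{X_\alpha}\partial_{X_\beta}f,
\]
which lies in $L^2$ with norm bounded by $C\Vert f\Vert_{W^{2,2}}$.
\item Density of $C_c^\infty$ in these spaces then gives \eqref{eq35} in general.
\end{itemize}

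\emph{Proof of \eqref{eq36}.} Again reduce to $Q_j,\phi\in C_c^\infty(\mathbb{R}^2)$ by density. Both sides are continuous in $Q\in [W^{1,2}]^3$ and $\phi\in W^{1,2}$, because $g^{\alpha\beta}$, $g_\beta$, $\sqrt G$ and $\partial_{X_\alpha}\widehat n$ are bounded; the last uses $\varphi\in BC^2$.

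Define functions on $\Gamma$ by $\Psi(\widehat x(X))=Q(X)$ and $\psi(\widehat x(X))=\phi(X)$, and set $\Phi=\psi\Psi$. Then $\widehat\Phi=\phi Q\in [W^{1,1}(\mathbb{R}^2)]^3$ is compactly supported, so Lemma \ref{lem32} applies.

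Compute the surface divergence via the representation in Lemma \ref{lem22}(ii):
\[
\nabla_\Gamma\cdot\Phi \;\widehat{=}\; g^{\alpha\beta}g_\beta\cdot\partial_{X_\alpha}(\phi Q)= \Big(g^{\alpha\beta}g_\beta\cdot\partial_{X_\alpha}Q\Big)\phi + Q\cdot g^{\alpha\beta}g_\beta\,\partial_{X_\alpha}\phi .
\]
This uses the pointwise identity $\widehat{\partial^\Gamma_j\psi}=g^{\alpha\beta}\partial_{X_\alpha}\widehat x_j\,\partial_{X_\beta}\widehat\psi$. The identity is implicit in Lemma \ref{lem22}; I would justify it by noting that $g^{\alpha\beta}g_\alpha\otimes g_\beta$ is the tangential projection $I-n\otimes n$.

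The right side of \eqref{eq34} is $-\int_{\mathbb{R}^2}\widehat H_\Gamma(\widehat n\cdot Q)\phi\sqrt G\,dX$. Use $H_\Gamma=-\nabla_\Gamma\cdot n$, represented as $-g^{\alpha\beta}g_\beta\cdot\partial_{X_\alpha}\widehat n$, so the right side becomes
\[
\int_{\mathbb{R}^2}\Big(g^{\alpha\beta}g_\beta\cdot\partial_{X_\alpha}\widehat n\Big)(\widehat n\cdot Q)\phi\sqrt G\,dX .
\]
Rearranging and swapping $\alpha\leftrightarrow\beta$ by symmetry of $g^{\alpha\beta}$ yields \eqref{eq36}.

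The main obstacle is bookkeeping rather than depth. One must confirm the pointwise (not merely integrated) form of Lemma \ref{lem22}, and check that Lemma \ref{lem32} applies to functions defined only on $\Gamma$. The latter is fine, since $W^{1,1}(\Gamma)$ is defined through $\widehat x$; alternatively one can extend $\Psi$ and $\psi$ constantly in $x_3$.
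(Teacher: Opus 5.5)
Your proof is correct. For \eqref{eq36} it follows the paper: apply Lemma \ref{lem32} to $\Phi=\psi\Psi$, pull back with Lemma \ref{lem22}, and use $H_\Gamma=-\nabla_\Gamma\cdot n$.

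For \eqref{eq35} you take a different route. The paper also uses the surface divergence theorem there, applied to $\Phi=\psi_\flat\nabla_\Gamma\psi_\sharp$. The curvature term vanishes because $n\cdot\nabla_\Gamma\psi_\sharp=0$, which gives the surface Green formula $\int_\Gamma(\Delta_\Gamma\psi_\sharp)\psi_\flat\,d\mathcal{H}^2_x=-\int_\Gamma\nabla_\Gamma\psi_\sharp\cdot\nabla_\Gamma\psi_\flat\,d\mathcal{H}^2_x$, which is then pulled back to $\mathbb{R}^2$ via Lemma \ref{lem22}. You instead integrate the compactly supported $C^1$ field $\sqrt{G}g^{\alpha\beta}\partial_{X_\beta}f\,\phi$ directly in $\mathbb{R}^2$ and use the symmetry of $g^{\alpha\beta}$. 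The paper only mentions this alternative in the remark after its proof.

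Your route is more elementary. It needs only that $\sqrt{G}g^{\alpha\beta}\in BC^1$, which follows from $\varphi\in BC^2$, and not Lemma \ref{lem32} or the representation formulas. The paper's route shows the geometric meaning of \eqref{eq35} and handles both identities with a single tool.

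Your density-and-continuity reduction fills a step the paper leaves implicit. The paper proves the identities only for traces $\widehat{\psi}$ of $C^\infty(\mathbb{R}^3)$ functions and then renames them as arbitrary $W^{2,2}$ or $W^{1,2}$ functions. The same goes for your pointwise justification via $g^{\alpha\beta}g_\alpha\otimes g_\beta=I-n\otimes n$ with extensions constant in $x_3$. For \eqref{eq36}, density is not strictly needed to invoke Lemma \ref{lem32}, since $\phi Q\in [W^{1,1}(\mathbb{R}^2)]^3$ already holds for $\phi,Q_j\in W^{1,2}(\mathbb{R}^2)$. It is harmless, though.
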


\begin{proof}[Proof of Lemma \ref{lem33}]
Let $\psi, \psi_\sharp, \psi_\flat, \Psi_1, \Psi_2, \Psi_3 \in C^\infty ( \mathbb{R}^3)$. Write $\widehat{\psi} = \widehat{\psi}(X) =\psi (\widehat{x} (X))$ and $\Psi = { }^t (\Psi_1 , \Psi_2, \Psi_3)$. Assume that $\widehat{\psi}_\sharp \in W^{2,2} (\mathbb{R}^2)$ and $\widehat{\psi}, \widehat{\psi}_\flat, \widehat{\Psi} \in W^{1,2} (\mathbb{R}^2)$.

We first show \eqref{eq35}. Substituting $\Phi = \psi_\flat \nabla_\Gamma \psi_\sharp$ into \eqref{eq34}, we obtain
\begin{equation*}
\int_\Gamma (\Delta_\Gamma \psi_\sharp) \psi_\flat { \ }d \mathcal{H}^2_x = - \int_{\Gamma} \nabla_\Gamma \psi_\sharp \cdot \nabla_\Gamma \psi_\flat  { \ }d \mathcal{H}_x^2.
\end{equation*}
Note that $n \cdot \nabla_\Gamma \psi_\flat =0$. From Lemma \ref{lem22}, we have
\begin{align*}
\int_{\mathbb{R}^2} \bigg\{ \frac{\partial}{\partial X_\alpha} \bigg( \sqrt{G} g^{\alpha \beta} \frac{\partial \widehat{\psi}_\sharp }{\partial X_\beta} \bigg) \bigg\} \widehat{\psi}_\flat { \ }dX = - \int_{\mathbb{R}^2} g^{\alpha \beta} \frac{\partial \widehat{\psi}_\sharp }{\partial X_\alpha} \frac{ \partial \widehat{\psi}_\flat }{\partial X_\beta} \sqrt{G} { \ }dX.
\end{align*}
Write $f = \widehat{\psi}_\sharp$, $\phi = \widehat{\psi}_\flat$, then we see \eqref{eq35}.

Next, we derive \eqref{eq36}. Substituting $\Phi = \Psi \psi$ into \eqref{eq34}, we obtain
\begin{equation*}
\int_\Gamma ( \nabla_\Gamma \cdot \Psi) \psi { \ }d \mathcal{H}^2_x = -\int_{\Gamma} \Psi \cdot \nabla_\Gamma \psi { \ }d \mathcal{H}_x^2 - \int_{\Gamma} H_\Gamma ( n \cdot \Psi ) \psi  { \ }d \mathcal{H}_x^2.
\end{equation*}
Since $H_\Gamma = - {\rm{div}}_\Gamma n$, we use Lemma \ref{lem22} to find that
\begin{align*}
\int_{\mathbb{R}^2} \bigg( g^{\alpha \beta}g_\beta \cdot \frac{\partial \widehat{\Psi} }{\partial X_\alpha} \bigg) \widehat{\psi} \sqrt{G} { \ }dX = - \int_{\mathbb{R}^2} \widehat{\Psi} \cdot g^{\alpha \beta} g_\alpha \frac{ \partial \widehat{\psi} }{\partial X_\beta} \sqrt{G} { \ }dX\\
+ \int_{\mathbb{R}^2} \bigg\{ g^{\alpha \beta}g_\beta \cdot \frac{\partial }{\partial X_\alpha} \frac{g_1 \times g_2}{\vert g_1 \times g_2 \vert} \bigg\} \bigg( \frac{g_1 \times g_2}{\vert g_1 \times g_2 \vert} \cdot \widehat{\Psi} \bigg) \widehat{\psi} \sqrt{G} { \ }dX.
\end{align*}
Write $Q = \widehat{\Psi}$ and $\phi = \widehat{\psi}$. Then we have \eqref{eq36}. Therefore, the lemma follows.
  \end{proof}
\noindent Indeed, we can derive \eqref{eq35} and \eqref{eq36} by using the usual integration by parts formula.

Finally, we prepare one useful lemma for the reader.
\begin{lemma}\label{lem34}
For all $f \in W^{2,2} (\mathbb{R}^2),$
\begin{equation}\label{eq37}
\bigg\Vert \frac{\partial^2 f}{\partial X_1^2} \bigg\Vert_{L^2(\mathbb{R}^2)}^2 + \bigg\Vert \frac{\partial^2 f}{\partial X_2^2} \bigg\Vert_{L^2(\mathbb{R}^2)}^2 + 2 \bigg\Vert \frac{\partial^2 f}{\partial X_1 \partial X_2} \bigg\Vert_{L^2(\mathbb{R}^2)}^2 = \Vert \Delta_X f \Vert_{L^2(\mathbb{R}^2)}^2.
\end{equation}
\end{lemma}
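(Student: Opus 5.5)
The identity \eqref{eq37} is the classical statement that, on the whole plane, the $L^2$-norm of the full Hessian equals the $L^2$-norm of the Laplacian. I will prove it first for $f \in C_0^\infty(\mathbb{R}^2)$ and then extend it by density.

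For $f \in C_0^\infty(\mathbb{R}^2)$, I expand
\begin{equation*}
\Vert \Delta_X f \Vert_{L^2(\mathbb{R}^2)}^2 = \bigg\Vert \frac{\partial^2 f}{\partial X_1^2} \bigg\Vert_{L^2}^2 + \bigg\Vert \frac{\partial^2 f}{\partial X_2^2} \bigg\Vert_{L^2}^2 + 2 \int_{\mathbb{R}^2} \frac{\partial^2 f}{\partial X_1^2} \frac{\partial^2 f}{\partial X_2^2} \, dX .
\end{equation*}
It therefore suffices to show that the cross term equals $2\Vert \partial_{X_1}\partial_{X_2} f \Vert_{L^2}^2$. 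I integrate by parts twice: first move one $\partial_{X_1}$ from $\partial_{X_1}^2 f$ onto $\partial_{X_2}^2 f$, then move one $\partial_{X_2}$ back. Since $f$ has compact support there are no boundary terms, and this gives
\begin{equation*}
\int_{\mathbb{R}^2} \partial_{X_1}^2 f \, \partial_{X_2}^2 f \, dX = \int_{\mathbb{R}^2} (\partial_{X_1}\partial_{X_2} f)^2 \, dX .
\end{equation*}
As an alternative, Plancherel gives the same identity at once: in Fourier variables it reads $\xi_1^4+\xi_2^4+2\xi_1^2\xi_2^2=(\xi_1^2+\xi_2^2)^2$, multiplied by $|\widehat f(\xi)|^2$ and integrated.

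For general $f \in W^{2,2}(\mathbb{R}^2)$, I take $f_k \in C_0^\infty(\mathbb{R}^2)$ with $f_k \to f$ in $W^{2,2}(\mathbb{R}^2)$, which is possible because $C_0^\infty$ is dense in $W^{2,2}$ on the whole space. Every term in \eqref{eq37} is a squared $L^2$-norm of a second derivative, so each is continuous with respect to the $W^{2,2}$-norm, and the identity passes to the limit.

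There is no real obstacle here. The only point requiring care is the justification of the vanishing boundary terms, and the density argument (or the Fourier-side proof, which needs no boundary terms at all) handles it.
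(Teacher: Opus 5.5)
Your proposal is correct and follows essentially the same route as the paper: prove the identity for $C_0^\infty(\mathbb{R}^2)$ functions by expanding $\Vert \Delta_X f\Vert_{L^2}^2$ and integrating by parts in the cross term, then pass to the limit using density of $C_0^\infty(\mathbb{R}^2)$ in $W^{2,2}(\mathbb{R}^2)$. The Plancherel version you mention is a valid alternative that avoids boundary terms, but it is not needed.
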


\begin{proof}[Proof of Lemma \ref{lem34}]
Let $f \in W^{2,2} (\mathbb{R}^2)$. Since $C_0^\infty ( \mathbb{R}^2)$ is dense in $W^{2,2} (\mathbb{R}^2)$, we take $\{ f_m \}_{m \in \mathbb{N}} \subset C_0^\infty (\mathbb{R}^2)$ such that
\begin{equation*}
\lim_{m \to \infty } \Vert f_m - f \Vert_{W^{2,2} (\mathbb{R}^2)} = 0 .
\end{equation*}
By integration by parts, we check that
\begin{multline*}
\Vert \Delta_X f_m \Vert_{L^2(\mathbb{R}^2)}^2  = \left<\frac{\partial^2 f_m}{\partial X_1^2} + \frac{\partial^2 f_m}{\partial X_2^2} , \frac{\partial^2 f_m}{\partial X_1^2} + \frac{\partial^2 f_m}{\partial X_2^2}  \right>_{L^2(\mathbb{R}^2)}\\
 = \bigg\Vert \frac{\partial^2 f_m}{\partial X_1^2} \bigg\Vert_{L^2(\mathbb{R}^2)}^2 + \bigg\Vert \frac{\partial^2 f_m}{\partial X_2^2} \bigg\Vert_{L^2(\mathbb{R}^2)}^2 + 2 \left<\frac{\partial^2 f_m}{\partial X_1^2}  , \frac{\partial^2 f_m}{\partial X_2^2}  \right>_{L^2(\mathbb{R}^2)}\\
 = \bigg\Vert \frac{\partial^2 f_m}{\partial X_1^2} \bigg\Vert_{L^2(\mathbb{R}^2)}^2 + \bigg\Vert \frac{\partial^2 f_m}{\partial X_2^2} \bigg\Vert_{L^2(\mathbb{R}^2)}^2 + 2 \bigg\Vert \frac{\partial^2 f_m}{\partial X_1 \partial X_2} \bigg\Vert_{L^2(\mathbb{R}^2)}^2 .
\end{multline*}
Letting $m \to \infty$, we have \eqref{eq37}. Therefore, the lemma follows.
  \end{proof}

%\newpage

\section{Laplace-Beltrami Operator on Unbounded Surfaces}\label{sect4}

In this section, we study the Laplace-Beltrami operator on our unbounded surface to prove Theorem \ref{thm25}. Fix $\kappa >0$ and $\varphi \in BC^2(\mathbb{R}^2)$. Let $L$ and $A$ be the two operators defined by \eqref{eq25} and \eqref{eq31}, respectively.

Let us now derive the properties of the operator $L$.
\begin{proposition}\label{prop41}Assume that $\Vert \nabla_X \varphi \Vert_{L^\infty (\mathbb{R}^2)} \leq 1/4$. Then\\
$(\rm{i})$ The operator $-L$ generates a $C_0$-semigroup on $L^2 (\mathbb{R}^2)$.\\
$(\rm{ii})$ The operator $-L$ generates an analytic semigroup on $L^2 (\mathbb{R}^2)$.\\
$(\rm{iii})$ For all $f \in L^2(\mathbb{R}^2)$ and $t > s \geq 0$
\begin{equation}\label{eq41}
\Vert {\rm{e}}^{- t L} f \Vert_{L^2(\mathbb{R}^2) } \leq 3^{1/4} \Vert {\rm{e}}^{- s L} f \Vert_{L^2(\mathbb{R}^2) } .
\end{equation}
$(\rm{iv})$ For all $f \in L^2(\mathbb{R}^2)$, 
\begin{equation}\label{eq42}
\int_0^\infty \Vert \nabla_X {\rm{e}}^{- \tau L} f \Vert_{L^2(\mathbb{R}^2) }^2 { \ }d \tau \leq \frac{3}{2 \kappa} \Vert f \Vert_{L^2(\mathbb{R}^2) }^2.
\end{equation}

\end{proposition}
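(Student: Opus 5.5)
The plan is to regard $L$ as a perturbation of the Laplace operator $A$ of \eqref{eq31}, and to get the energy-type bounds (iii)–(iv) from the weighted inner product $\left< f, h\right>_G := \int_{\mathbb{R}^2} f h \sqrt{G}\,dX$, for which $L$ is symmetric and nonnegative by \eqref{eq35}.

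\textbf{Step 1: generation.} Expanding the divergence,
\[
Lf = -\kappa g^{\alpha\beta}\partial_{X_\alpha}\partial_{X_\beta} f - \kappa\Big(\tfrac{1}{\sqrt G}\partial_{X_\alpha}(\sqrt G g^{\alpha\beta})\Big)\partial_{X_\beta} f ,
\]
so $L = A + B$ with $B f = -\kappa (g^{\alpha\beta}-\delta^{\alpha\beta})\partial_\alpha\partial_\beta f + (\text{first order})$. From the explicit form of $g^{\alpha\beta}$ one has $|g^{\alpha\beta}-\delta^{\alpha\beta}| \le |\nabla_X\varphi|^2 \le 1/16$. Together with Lemma \ref{lem34} this gives
\[
\Vert \kappa (g^{\alpha\beta}-\delta^{\alpha\beta})\partial_\alpha\partial_\beta f\Vert_{L^2} \le c\,\Vert \nabla_X\varphi\Vert_{L^\infty}^2 \Vert Af\Vert_{L^2}
\]
with $c\,\Vert\nabla_X\varphi\Vert_{L^\infty}^2 < 1/2$; one should check the numerical constant here. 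The first-order part is $A$-bounded with relative bound $0$, by interpolation $\Vert\nabla f\Vert \le \epsilon\Vert\Delta f\Vert + C_\epsilon\Vert f\Vert$, using Lemma \ref{lem21} to bound the coefficients. Hence $B$ is $A$-bounded with relative bound $<1/2$. Since $-A$ generates a bounded analytic semigroup (Lemma \ref{lem31}(ii)), the standard perturbation theorem for analytic generators (Pazy Thm 3.2.1 / Engel–Nagel III.2.10) shows that $-L$ with $D(L)=W^{2,2}$ generates an analytic semigroup, possibly only after a shift. This gives (ii) and hence (i).

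\textbf{Step 2: energy identity.} For $f\in L^2$ put $v(t)=\mathrm{e}^{-tL}f$. Analyticity gives $v\in C^1((0,\infty);L^2)\cap C((0,\infty);W^{2,2})$. Multiplying $v'+Lv=0$ by $v\sqrt G$, integrating, and applying \eqref{eq35}, we get for $t>s>0$
\[
\Vert G^{1/4}v(t)\Vert^2 + 2\kappa\int_s^t \int g^{\alpha\beta}\partial_\alpha v\,\partial_\beta v\sqrt G\,dX\,d\tau = \Vert G^{1/4}v(s)\Vert^2 .
\]
Letting $s\to0$ by strong continuity extends this to $s=0$. In particular $\Vert G^{1/4}v\Vert$ is nonincreasing. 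This shows that $-L$ is dissipative in the equivalent weighted norm, so the semigroup is bounded and no shift was actually needed. By Lemma \ref{lem21}, $1\le G\le 1+2/16\le 3$, so $\Vert v(t)\Vert \le \Vert G^{1/4}v(t)\Vert \le \Vert G^{1/4}v(s)\Vert \le 3^{1/4}\Vert v(s)\Vert$, which is \eqref{eq41}.

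\textbf{Step 3: gradient bound (iv).} The matrix $(g^{\alpha\beta})$ has eigenvalues $1$ and $1/G$, so $g^{\alpha\beta}\xi_\alpha\xi_\beta\sqrt G \ge |\xi|^2/\sqrt G \ge |\xi|^2/\sqrt3$. The identity from Step 2 then gives
\[
\frac{2\kappa}{\sqrt 3}\int_0^t \Vert \nabla_X v\Vert^2\,d\tau \le \Vert G^{1/4}f\Vert^2 \le \sqrt3\,\Vert f\Vert^2 .
\]
Letting $t\to\infty$ yields \eqref{eq42} with constant $3/(2\kappa)$.

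The main obstacle is Step 1. Two points need care there. First, the relative bound of the principal perturbation must be kept below the threshold that is uniform over the analytic sector; this is what the smallness $\Vert\nabla_X\varphi\Vert_{L^\infty}\le1/4$ is for. Second, the lower-order terms require the $BC^2$ bound on $\varphi$. If the direct perturbation estimate turns out to be delicate, the fallback is to note that $L$ is symmetric and nonnegative on the weighted space $L^2(\sqrt G\,dX)$, whose norm is equivalent to the $L^2$ norm. It would then suffice to prove self-adjointness there, i.e. $\mathcal{R}(\lambda+L)=L^2$, which gives an analytic contraction semigroup in that norm and transfers to $L^2(\mathbb{R}^2)$.
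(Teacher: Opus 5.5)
Your proposal is correct and follows essentially the same route as the paper. For (i)--(ii) you use the decomposition $L=A+B$ with $B$ relatively $A$-bounded with small bound (cf.\ Lemma~\ref{lem43} and \eqref{EQ4015}) plus analytic perturbation theory, and for (iii)--(iv) the $\sqrt{G}$-weighted energy identity from \eqref{eq35} combined with $1\le G\le 3$ and the coercivity bound \eqref{eq44}. Your explicit passage $s\to 0$ and your eigenvalue computation for $(g^{\alpha\beta})$ are slightly more detailed versions of those same steps.
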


To prove Proposition \ref{prop41}, we prepare two lemmas.
\begin{lemma}\label{lem42}
Assume that $\Vert \nabla_X \varphi \Vert_{L^\infty (\mathbb{R}^2)} \leq 1$. Then\\
\noindent $(\rm{i})$ For all $X \in \mathbb{R}^2$,
\begin{equation}\label{eq43}
1 \leq G(X) \leq 3.
\end{equation}
\noindent $(\rm{ii})$ For all $f \in W^{1,2}(\mathbb{R}^2)$,
\begin{equation}\label{eq44}
\int_{\mathbb{R}^2} g^{\alpha \beta} \frac{\partial f }{\partial X_\alpha} \frac{ \partial f }{\partial X_\beta} \sqrt{G} { \ }dX \geq \frac{1}{\sqrt{3}} \Vert \nabla_X f \Vert_{L^2(\mathbb{R}^2)}^2,
\end{equation}
\begin{equation}\label{eq45}
\int_{\mathbb{R}^2} g^{\alpha \beta} \frac{\partial f }{\partial X_\alpha} \frac{ \partial f }{\partial X_\beta} \sqrt{G} { \ }dX \leq 3 \Vert \nabla_X f \Vert_{L^2(\mathbb{R}^2)}^2.
\end{equation}
\end{lemma}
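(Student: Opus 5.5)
Since $G = 1 + (\partial_{X_1}\varphi)^2 + (\partial_{X_2}\varphi)^2$, part (i) is immediate: $G \ge 1$ trivially. For the upper bound, each $|\partial_{X_\alpha}\varphi| \le |\nabla_X\varphi| \le \|\nabla_X\varphi\|_{L^\infty(\mathbb{R}^2)} \le 1$, so $G \le 3$. In fact $G = 1 + |\nabla_X\varphi|^2 \le 2$; either way \eqref{eq43} holds, and it is also consistent with Lemma \ref{lem21}.

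For part (ii) the plan is a pointwise eigenvalue bound on the matrix $(g^{\alpha\beta})$, followed by integration against the weight $\sqrt{G}$. Write $p = \nabla_X\varphi(X) \in \mathbb{R}^2$. Then $(g_{\alpha\beta}) = I + p\,{}^t p$, whose eigenvalues are $1$ (eigenvector orthogonal to $p$) and $1 + |p|^2 = G$ (eigenvector $p$). Hence $(g^{\alpha\beta}) = (I + p\,{}^t p)^{-1}$ has eigenvalues $1$ and $1/G$, which gives, for every $\xi \in \mathbb{R}^2$,
\begin{equation*}
\frac{1}{G}\vert \xi \vert^2 \leq g^{\alpha\beta}\xi_\alpha \xi_\beta \leq \vert \xi \vert^2 .
\end{equation*}
This can also be checked directly from the explicit formula for $g^{\alpha\beta}$ in Section \ref{sect2}: $G\, g^{\alpha\beta}\xi_\alpha\xi_\beta = |\xi|^2 + |p|^2|\xi|^2 - (p\cdot\xi)^2$, which lies between $|\xi|^2$ and $G|\xi|^2$ by Cauchy--Schwarz.

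Apply this with $\xi = \nabla_X f(X)$ and multiply by $\sqrt{G}$. Using $1 \le G \le 3$ from \eqref{eq43}:
\begin{equation*}
\frac{1}{\sqrt{3}}\vert\nabla_X f\vert^2 \leq \frac{1}{\sqrt{G}}\vert\nabla_X f\vert^2 \leq g^{\alpha\beta}\frac{\partial f}{\partial X_\alpha}\frac{\partial f}{\partial X_\beta}\sqrt{G} \leq \sqrt{G}\,\vert\nabla_X f\vert^2 \leq \sqrt{3}\,\vert\nabla_X f\vert^2 \leq 3\vert\nabla_X f\vert^2 .
\end{equation*}
All terms are integrable for $f \in W^{1,2}(\mathbb{R}^2)$, since the coefficients are bounded and measurable. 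Integrating over $\mathbb{R}^2$ then yields \eqref{eq44} and \eqref{eq45}. No density argument is needed, because the inequality is pointwise almost everywhere.

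The only step requiring any care is the eigenvalue (ellipticity) bound for $g^{\alpha\beta}$. It is elementary via the rank-one structure $I + p\,{}^t p$, so I expect no real obstacle.
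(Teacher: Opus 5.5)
Your proof is correct and is essentially the paper's argument. The paper also writes the integrand pointwise as $\frac{1}{\sqrt G}\bigl(|\nabla_X f|^2+(\partial_{X_2}\varphi\,\partial_{X_1}f-\partial_{X_1}\varphi\,\partial_{X_2}f)^2\bigr)$, which is your Lagrange-identity check, and then integrates; it differs only in the upper bound, where it uses $(a-b)^2\le 2a^2+2b^2$ to reach the constant $3$ while your Cauchy--Schwarz step gives the sharper $\sqrt G\le\sqrt3$.

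Your side remark $G\le 2$ assumes $\Vert \nabla_X\varphi\Vert_{L^\infty(\mathbb{R}^2)}$ is the sup of the Euclidean length of $\nabla_X\varphi$, whereas the factor $2$ in Lemma \ref{lem21} suggests the paper may intend a componentwise bound; nothing in your proof depends on it.
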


\begin{proof}[Proof of Lemma \ref{lem42}]
Assume that $\Vert \nabla_X \varphi \Vert_{L^\infty (\mathbb{R}^2)} \leq 1$. From Lemma \ref{lem21}, we see $(\rm{i})$. Let $f \in W^{1,2} (\mathbb{R}^2)$. By the definitions of $g^{\alpha \beta}$, $g_{\alpha \beta}$, and \eqref{eq43}, we check that
\begin{multline*}
\text{(L.H.S.) of }\eqref{eq44}\\
 = \int_{\mathbb{R}^2}\bigg( g^{11}\frac{\partial f}{\partial X_1} \frac{\partial f}{\partial X_1} + g^{22}\frac{\partial f}{\partial X_2} \frac{\partial f}{\partial X_2} + 2 g^{12}\frac{\partial f}{\partial X_1} \frac{\partial f}{\partial X_2} \bigg) \sqrt{G} { \ }dX\\
= \int_{\mathbb{R}^2} \frac{1}{\sqrt{G}} \bigg( g_{22} \frac{\partial f}{\partial X_1} \frac{\partial f}{\partial X_1} +g_{11} \frac{\partial f}{\partial X_2} \frac{\partial f}{\partial X_2} -2 g_{12} \frac{\partial f}{\partial X_1} \frac{\partial f}{\partial X_2} \bigg) { \ }d X\\ 
= \int_{\mathbb{R}^2} \frac{1}{\sqrt{G}} \vert \nabla_X f \vert^2 { \ }d X
+ \int_{\mathbb{R}^2} \frac{1}{\sqrt{G}} \bigg( \frac{\partial \varphi}{\partial X_2} \frac{\partial f}{\partial X_1} - \frac{\partial \varphi}{\partial X_1} \frac{\partial f}{\partial X_2} \bigg)^2 { \ }d X\\ 
 \geq \frac{1}{\sqrt{3}} \Vert \nabla_X f \Vert_{L^2(\mathbb{R}^2)}^2,
\end{multline*}
and that
\begin{multline*}
\text{(L.H.S.) of }\eqref{eq45}\\
\leq \int_{\mathbb{R}^2} \frac{1}{\sqrt{G}} \bigg( \{ 1+2 (\partial_{X_2} \varphi )^2 \} \frac{\partial f}{\partial X_1} \frac{\partial f}{\partial X_1} +  \{1+2 (\partial_{X_1} \varphi )^2 \} \frac{\partial f}{\partial X_2} \frac{\partial f}{\partial X_2} \bigg) { \ }d X\\ 
 \leq 3 \Vert \nabla_X f \Vert_{L^2(\mathbb{R}^2)}^2.
\end{multline*}
Here we used the facts that $(a-b)^2 \leq 2 a^2 + 2 b^2$ and $\Vert \nabla_X \varphi \Vert_{L^\infty (\mathbb{R}^2)} \leq 1$. Therefore, we see $(\rm{ii})$.
  \end{proof}

\begin{lemma}\label{lem43}
Set
\begin{equation}\label{eq46}
\begin{cases}
B f = (L- A)f,\\
D (B) = D(A) (= W^{2,2}(\mathbb{R}^2)).
\end{cases}
\end{equation}
Then for all $f \in D (A)$
\begin{multline}\label{eq47}
\Vert Bf \Vert_{L^2 ( \mathbb{R}^2)}\\
 \leq \Vert \nabla_X \varphi \Vert_{L^\infty ( \mathbb{R}^2)}^2 \Vert A f \Vert_{L^2 ( \mathbb{R}^2)} + 40\kappa \Vert \nabla_X^2 \varphi \Vert_{L^{\infty}(\mathbb{R}^2)} \Vert \nabla_X f \Vert_{L^2 (\mathbb{R}^2)}.
\end{multline}
\end{lemma}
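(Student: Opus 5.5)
The proof will expand $L$ in nondivergence form and compare it term by term with $A=-\kappa\Delta_X$. For $f\in W^{2,2}(\mathbb{R}^2)$ the product rule gives
\begin{equation*}
Lf=-\kappa g^{\alpha\beta}\frac{\partial^2 f}{\partial X_\alpha\partial X_\beta}-\kappa\Big(\frac{\partial g^{\alpha\beta}}{\partial X_\alpha}+g^{\alpha\beta}\frac{1}{2G}\frac{\partial G}{\partial X_\alpha}\Big)\frac{\partial f}{\partial X_\beta}.
\end{equation*}
Hence $Bf=B_2f+B_1f$, where $B_2f=-\kappa(g^{\alpha\beta}-\delta_{\alpha\beta})\partial_{X_\alpha}\partial_{X_\beta}f$ is the second-order part and $B_1f$ is the first-order part. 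I will estimate the two parts separately.

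\emph{Second-order part.} Put $p=\nabla_X\varphi$. From the explicit inverse matrix in Section \ref{sect2}, $G g^{11}-G=-(\partial_{X_1}\varphi)^2$, $Gg^{22}-G=-(\partial_{X_2}\varphi)^2$ and $Gg^{12}=-\partial_{X_1}\varphi\,\partial_{X_2}\varphi$. Therefore
\begin{equation*}
g^{\alpha\beta}-\delta_{\alpha\beta}=-\frac{p_\alpha p_\beta}{G},\qquad B_2f=\frac{\kappa}{G}\,p_\alpha p_\beta\,\frac{\partial^2 f}{\partial X_\alpha\partial X_\beta}.
\end{equation*}
By Cauchy--Schwarz applied to the symmetric matrix $(\partial_\alpha\partial_\beta f)$ and the vector $p$, we get pointwise
\begin{equation*}
|p_\alpha p_\beta\partial_\alpha\partial_\beta f|\le |p|^2\big((\partial_1^2f)^2+(\partial_2^2f)^2+2(\partial_1\partial_2f)^2\big)^{1/2}.
\end{equation*}
Using $1/G\le1$, squaring, integrating, and applying Lemma \ref{lem34} then gives
\begin{equation*}
\|B_2f\|_{L^2}\le\kappa\|\nabla_X\varphi\|_{L^\infty}^2\|\Delta_Xf\|_{L^2}=\|\nabla_X\varphi\|_{L^\infty}^2\|Af\|_{L^2}.
\end{equation*}
This is exactly the first term of \eqref{eq47}, with constant one. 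This is why Lemma \ref{lem34} was prepared.

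\emph{First-order part.} Lemma \ref{lem21} gives $|\partial_{X_\alpha}g^{\alpha\beta}|\le 6\|\nabla_X^2\varphi\|_{L^\infty}$. The coefficients $g^{\alpha\beta}$ satisfy $|g^{\alpha\beta}|\le1$, since $g^{11},g^{22}\le1$ and $|g^{12}|\le |p|^2/(2G)\le1/2$.

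The step needing care is the factor $\partial_{X_\alpha}G/(2G)$. The bound on $\partial_{X_\alpha}G$ in Lemma \ref{lem21} involves $\|\nabla_X\varphi\|_{W^{1,\infty}}^2$, which is not linear in $\|\nabla_X^2\varphi\|_{L^\infty}$, and the lemma assumes nothing about the size of $\nabla_X\varphi$. I will instead compute directly: $\partial_{X_\alpha}G=2p_\gamma\partial_{X_\alpha}\partial_{X_\gamma}\varphi$. Hence
\begin{equation*}
\frac{|\partial_{X_\alpha}G|}{2G}\le\frac{|p|}{1+|p|^2}\cdot 2\|\nabla^2_X\varphi\|_{L^\infty}\le\|\nabla_X^2\varphi\|_{L^\infty},
\end{equation*}
using $|p|/(1+|p|^2)\le1/2$. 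Summing over $\alpha,\beta\in\{1,2\}$ and bounding $|\partial_{X_\beta}f|\le|\nabla_Xf|$, the coefficient of each $\partial_{X_\beta}f$ is at most $(12+4)\|\nabla_X^2\varphi\|_{L^\infty}$. So
\begin{equation*}
\|B_1f\|_{L^2}\le 32\kappa\|\nabla_X^2\varphi\|_{L^\infty}\|\nabla_Xf\|_{L^2}\le 40\kappa\|\nabla_X^2\varphi\|_{L^\infty}\|\nabla_Xf\|_{L^2}.
\end{equation*}
Finally, $\|Bf\|_{L^2}\le\|B_2f\|_{L^2}+\|B_1f\|_{L^2}$ yields \eqref{eq47}.
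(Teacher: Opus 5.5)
Your proposal is correct and follows essentially the same route as the paper. The paper also splits $L=\kappa(L_1+L_2+L_3)$ and compares the second-order part with $A$ using $1/G\le 1$ and Lemma \ref{lem34}; your identity $g^{\alpha\beta}-\delta_{\alpha\beta}=-\partial_{X_\alpha}\varphi\,\partial_{X_\beta}\varphi/G$ together with the pointwise Cauchy--Schwarz step simply makes its ``H\"older'' step explicit. It likewise bounds the two first-order parts by $L^\infty$ estimates on $\partial_{X_\alpha}g^{\alpha\beta}$ and $g^{\alpha\beta}\partial_{X_\alpha}G/G$ that are linear in $\Vert\nabla_X^2\varphi\Vert_{L^\infty(\mathbb{R}^2)}$, as in \eqref{EQ4011}--\eqref{EQ4012}; only the constant bookkeeping differs ($32$ versus the paper's $24+16$), and both stay within $40$.
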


\begin{proof}[Proof of Lemma \ref{lem43}]
Fix $f \in D (A)$. From \eqref{eq25}, we find that
\begin{equation*}
L f = \kappa L_1 f + \kappa L_2 f + \kappa L_3 f,
\end{equation*}
where
\begin{equation*}
L_1 f  = - g^{\alpha \beta} \frac{\partial^2 f }{\partial X_\alpha \partial X_\beta},{ \ }L_2 f = - \frac{\partial g^{\alpha \beta} }{\partial X_\alpha} \frac{\partial f}{\partial X_\beta},{ \ }
L_3 f = - \frac{g^{\alpha \beta}}{ 2  G}\frac{\partial G }{\partial X_\alpha} \frac{\partial f}{\partial X_\beta} .
\end{equation*}
Using $1/G \leq 1$, the H\"{o}lder inequality, and \eqref{eq37}, we find that
\begin{multline}
\Vert - L_1 f - \Delta_X f  \Vert_{L^2(\mathbb{R}^2)}\\
  \leq \bigg\Vert \frac{(\partial_{ X_1 } \varphi)^2}{G} \frac{\partial^2 f }{\partial X_1 \partial X_1}+\frac{(\partial_{ X_2 } \varphi)^2}{G} \frac{\partial^2 f }{\partial X_2 \partial X_2} - \frac{2(\partial_{X_1} \varphi)(\partial_{X_2} \varphi)}{G} \frac{\partial^2 f }{\partial X_1 \partial X_2} \bigg\Vert_{L^2(\mathbb{R}^2)}\\
 \leq \Vert \nabla_X \varphi \Vert_{L^\infty(\mathbb{R}^2)}^2 \Vert \Delta_X f \Vert_{L^2(\mathbb{R}^2)}.\label{eq48}
\end{multline}
Since
\begin{equation*}
G = G (X) = 1 + (\partial_{X_1} \varphi )^2 + (\partial_{X_2} \varphi)^2,
\end{equation*}
we easily check that for all $X \in \mathbb{R}^2$,
\begin{align}
\label{eq49} \frac{1}{ G } \leq 1,{ \ }\frac{ \vert \partial_{X_1} \varphi \vert}{ G} \leq 1,{ \ }\frac{\vert \partial_{X_2} \varphi \vert }{G} \leq 1,{ \ }\frac{ \vert \partial_{X_1} \varphi \vert^2}{ G} \leq 1,{ \ }\frac{\vert \partial_{X_2} \varphi \vert^2 }{G} \leq 1,\\
\label{EQ4010} \frac{ \vert \partial_{X_1} \varphi \vert \vert \partial_{X_2} \varphi \vert }{ G} \leq 1,{ \ }\frac{ 1 + \vert \partial_{X_1} \varphi \vert^2 }{ G} \leq 1,{ \ }\frac{ 1+ \vert \partial_{X_2} \varphi \vert^2 }{ G} \leq 1.
\end{align}
From the definitions of $g^{\alpha \beta}$, $g_{\alpha \beta}$, \eqref{eq49}, and \eqref{EQ4010}, we see that for each $\alpha , \beta \in \{ 1,2 \}$,
\begin{align}
\label{EQ4011} \bigg\Vert \frac{\partial g^{\alpha \beta}}{\partial X_\alpha} \bigg\Vert_{L^\infty (\mathbb{R}^2)} & \leq 6 \Vert \nabla_X^2 \varphi \Vert_{L^\infty (\mathbb{R}^2)},\\
\label{EQ4012} \bigg\Vert \frac{g^{\alpha \beta}}{G} \frac{\partial G}{\partial X_\alpha} \bigg\Vert_{L^\infty (\mathbb{R}^2)} & \leq 4 \Vert \nabla_X^2 \varphi \Vert_{L^\infty (\mathbb{R}^2)}.
\end{align}
Using \eqref{EQ4011} and \eqref{EQ4012}, we observe that
\begin{align}
\Vert L_2 f \Vert_{L^2(\mathbb{R}^2)} & = \bigg\Vert \frac{1}{2}  \sum_{\alpha,\beta=1}^2 \frac{\partial g^{\alpha \beta} }{\partial X_\alpha} \frac{\partial f}{\partial X_\beta} \bigg\Vert_{L^2(\mathbb{R}^2)}\notag\\
& \leq 24 \Vert \nabla_X^2 \varphi \Vert_{L^{\infty}(\mathbb{R}^2)}  \Vert \nabla_X f \Vert_{L^2(\mathbb{R}^2)},\label{EQ4013}
\end{align}
and that
\begin{align}
\Vert L_3 f \Vert_{L^2 ( \mathbb{R}^2 )} &= \bigg\Vert \sum_{\alpha,\beta=1}^2 \frac{g^{\alpha \beta}}{G}\frac{\partial G }{\partial X_\alpha} \frac{\partial f}{\partial X_\beta} \bigg\Vert_{L^2(\mathbb{R}^2)}\notag\\
& \leq 16 \Vert \nabla_X^2 \varphi \Vert_{L^{\infty}(\mathbb{R}^2)} \Vert \nabla_X f \Vert_{L^2(\mathbb{R}^2)}. \label{EQ4014}
\end{align}
By \eqref{eq48}, \eqref{EQ4013}, and \eqref{EQ4014}, we check that
\begin{align*}
\Vert Bf \Vert_{L^2 ( \mathbb{R}^2)} & = \Vert - (\kappa L_1 + A )f - \kappa L_2 f - \kappa L_3 f \Vert_{L^2 ( \mathbb{R}^2)}\\
& \leq \Vert \nabla_X \varphi \Vert_{L^\infty ( \mathbb{R}^2)}^2 \Vert A f \Vert_{L^2 ( \mathbb{R}^2)} + 40 \kappa \Vert \nabla_X^2 \varphi \Vert_{L^{\infty}(\mathbb{R}^2)}  \Vert\nabla_X f \Vert_{L^2 (\mathbb{R}^2)}.
\end{align*}
Thus, we have \eqref{eq47}. Therefore, the lemma follows.
  \end{proof}

Let us prove Proposition \ref{prop41}.
\begin{proof}[Proof of Proposition \ref{prop41}]

Assume that $\Vert \nabla_X \varphi \Vert_{L^\infty(\mathbb{R}^2)} \leq 1/4$. Using \eqref{eq47}, the interpolation theory, and \eqref{eq37}, we observe that for all $f \in D(A)$
\begin{align}
\Vert Bf \Vert_{L^2 ( \mathbb{R}^2)} & \leq \frac{1}{16} \Vert A f \Vert_{L^2 ( \mathbb{R}^2)} + 40 \kappa \Vert \nabla_X^2 \varphi \Vert_{L^{\infty}(\mathbb{R}^2)} \Vert \nabla_X f \Vert_{L^2 (\mathbb{R}^2)}\notag\\
& \leq \frac{1}{4} \Vert A f \Vert_{L^2 ( \mathbb{R}^2)} + C ( \kappa , \Vert \nabla_X^2 \varphi \Vert_{L^{\infty}(\mathbb{R}^2)} ) \Vert  f \Vert_{L^2 (\mathbb{R}^2)}.\label{EQ4015}
\end{align}
Since $-A$ generates an analytic contraction semigroup on $L^2 ( \mathbb{R}^2)$, it follows from the perturbation theory on the semigroup theory(\cite[Chapter 3]{Paz83}, \cite[Chapter III]{EN00}) to find $(\rm{i})$ and $(\rm{ii})$.

Now we show $(\rm{iii})$ and $(\rm{iv})$. Let $f \in L^2 ( \mathbb{R}^2 )$. Set
\begin{equation*}
w(t) = {\rm{e}}^{- t L} f{ \ }(t>0).
\end{equation*}
Since $-L$ generates an analytic semigroup on $L^2(\mathbb{R}^2)$, we see that
\begin{equation*}
w \in C ([0,\infty);L^2(\mathbb{R}^2)) \cap C ((0,\infty); W^{2,2} (\mathbb{R}^2)) \cap C^1 ((0,\infty); L^2(\mathbb{R}^2)),
\end{equation*}
and that $w$ satisfies
\begin{equation}\label{EQ4016}
\begin{cases}
dw/{dt} + L w = 0 \text{ on } (0,\infty),\\
w\vert_{t=0} = f.
\end{cases}
\end{equation}
Using \eqref{EQ4016} and \eqref{eq35}, we check that for $\tau >0$
\begin{align*}
\frac{1}{2} \frac{d}{d\tau} \Vert G^{1/4} w(\tau) \Vert_{L^2(\mathbb{R}^2)}^2 & = \int_{\mathbb{R}^2} \{ L w(\tau) \} w(\tau) \sqrt{G} { \ }d X\\
& = - \kappa \int_{\mathbb{R}^2} g^{\alpha \beta} \frac{\partial w }{\partial X_\alpha} \frac{ \partial w }{\partial X_\beta} \sqrt{G}{ \ }dX.
\end{align*}
Integrating with respect to $\tau$, we see that for $0 \leq s <t$
\begin{equation*}
\Vert G^{1/4} w(t) \Vert_{L^2(\mathbb{R}^2)}^2 + 2 \kappa \int_s^t \int_{\mathbb{R}^2} g^{\alpha \beta} \frac{\partial w }{\partial X_\alpha} \frac{ \partial w }{\partial X_\beta} \sqrt{G} { \ }dX d\tau = \Vert G^{1/4} w(s) \Vert_{L^2(\mathbb{R}^2)}^2. 
\end{equation*}
Applying Lemma \ref{lem42}, we check that for all $t > s \geq 0$,
\begin{equation*}
\Vert w(t) \Vert_{L^2(\mathbb{R}^2)}^2 + \frac{2}{ \sqrt{3}} \kappa \int_s^t \Vert \nabla_X w ( \tau ) \Vert_{L^2(\mathbb{R}^2)}^2 { \ } d \tau \leq \sqrt{3} \Vert w (s) \Vert_{L^2(\mathbb{R}^2)}^2. 
\end{equation*}
Since $w (t) = {\rm{e}}^{- t L} f$, we see $(\rm{iii})$ and that for $t >0$
\begin{equation*}
\int_0^t \Vert \nabla_X w ( \tau ) \Vert_{L^2(\mathbb{R}^2)}^2 { \ } d \tau \leq \frac{3}{2 \kappa} \Vert f \Vert_{L^2(\mathbb{R}^2)}^2. 
\end{equation*}
This implies $(\rm{iv})$. Therefore, Proposition \ref{prop41} is proved.
  \end{proof}

Let us prove Theorem \ref{thm25}.
\begin{proof}[Proof of Theorem \ref{thm25}]
Let $v_0 \in L^2( \mathbb{R}^2 )$ and\\
 $F \in L^2(0,\infty ;L^2(\mathbb{R}^2)) \cap C_{loc}^\eta ((0, \infty );L^2(\mathbb{R}^2))$ for some $0 < \eta < 1$. Assume that $\Vert \nabla_X \varphi \Vert_{L^\infty (\mathbb{R}^2)} < 1/4$. Since $- L$ generates an analytic semigroup on $L^2 ( \mathbb{R}^2)$ form Proposition \ref{prop41}, we see that system
\begin{equation}\label{EQ4017}
\begin{cases}
dv/{dt} + L v = F \text{ on } (0, \infty),\\
v \vert_{t = 0} = v_0,
\end{cases}
\end{equation}
admits a unique global-in-time strong solution $v$ in
\begin{equation}\label{EQ4018}
C( [0,\infty) ; L^2(\mathbb{R}^2)) \cap C( (0,\infty) ; W^{2,2}(\mathbb{R}^2)) \cap C^1( (0,\infty) ; L^2(\mathbb{R}^2)),
\end{equation}
satisfying that $v$ is represented by
\begin{equation}\label{EQ4019}
v (t) = {\rm{e}}^{ - t L} v_0 + \int_0^t {\rm{e}}^{- ( t - \tau ) L} F (\tau ) { \ }d \tau { \ }(t > 0).
\end{equation}
By the same argument to derive \eqref{eq41} and \eqref{eq42} in the proof of Proposition \ref{prop41}, we see that for all $0 \leq s < t <+ \infty$
\begin{multline*}
\Vert G^{1/4} v(t) \Vert_{L^2(\mathbb{R}^2 )}^2 + 2 \kappa \int_s^t \int_{\mathbb{R}^2} g^{\alpha \beta} \frac{\partial v(\tau) }{\partial X_\alpha} \frac{ \partial v(\tau) }{\partial X_\beta} \sqrt{G} { \ }dX d \tau\\
 = \Vert G^{1/4} v(s ) \Vert_{L^2(\mathbb{R}^2 )}^2 + \int_s^t \int_{\mathbb{R}^2} F(\tau) v(\tau) G^{1/2} { \ }d X d \tau,
\end{multline*}
which is \eqref{eq26}.

Finally, we state the H\"{o}lder continuity of the solution $v$. Since\\ $F \in L^2(0,\infty ;L^2(\mathbb{R}^2)) \cap C_{loc}^\eta ((0, \infty );L^2(\mathbb{R}^2))$, it follows from Lemma \ref{lem82} in Appendix to find that
\begin{equation*}
v, Lv, dv/{dt} \in C_{loc}^{ \eta } ((0, \infty);L^2 (\mathbb{R}^2) ).
\end{equation*}
Since
\begin{equation*}
\Vert G^{1/4} v (t) \Vert_{L^2(\mathbb{R}^2)} \leq \sqrt{3} \Vert v (t) \Vert_{L^2(\mathbb{R}^2)},
\end{equation*}
we conclude that
\begin{equation*}
G^{1/4} v, G^{1/4} Lv, G^{1/4} dv/{dt} \in C_{loc}^{ \eta } ((0, \infty);L^2 (\mathbb{R}^2) ).
\end{equation*}
Therefore, Theorem \ref{thm25} is proved.
  \end{proof}

%\newpage

\section{Maximal $L^2$-Regularity}\label{sect5}

In this section, we study maximal regularity of solutions to our diffusion system. We derive maximal $L^2$-regularity of our Laplace-Beltrami operator by applying maximal $L^2$-regularity of the Laplace operator. Let $\kappa >0$, $\varphi \in BC^2(\mathbb{R}^2)$, and let $C_\star = C_\star (\kappa ) >0$ be the constant appearing in \eqref{eq32}. Let $L$, $A$, and $B$ be the three operators defined by \eqref{eq25}, \eqref{eq31}, and \eqref{eq46}, respectively.
\begin{proof}[Proof of Theorem \ref{thm26}]
Let $v_0 \in W^{1,2} (\mathbb{R}^2)$, and\\
 $F \in L^2(0,\infty; L^2(\mathbb{R}^2)) \cap C_{loc}^\eta ((0,\infty); L^2 (\mathbb{R}^2))$ for some $0< \eta <1$. Assume that 
\begin{equation}\label{eq51}
\Vert \nabla_X \varphi \Vert_{L^\infty (\mathbb{R}^2)} \leq \min \left\{ \frac{1}{4} , \frac{1} {4 \sqrt{C_\star}} \right\}.
\end{equation}
From Theorem \ref{thm25} we find that system \eqref{EQ4017} admits a unique global-in-time strong solution $v$ in \eqref{EQ4018}, satisfying \eqref{EQ4019}. Write
\begin{equation*}
\mathcal{M} = \Vert \nabla_X^2 \varphi \Vert_{L^\infty (\mathbb{R}^2)}.
\end{equation*}
Now we admit that for each fixed $T>0$
\begin{equation}\label{eq52}
B v \in L^2(0,T;L^2(\mathbb{R}^2)).
\end{equation}
See the latter part for the proof of \eqref{eq52}.

We first show assertion $(\rm{i})$.  Since $v$ satisfies
\begin{equation*}
\begin{cases}
dv/{dt}  + Av = F - B v  \text{ on }(0,T),\\
v\vert_{t=0} = v_0,
\end{cases}
\end{equation*}
we apply \eqref{eq32} to have
\begin{multline}\label{eq53}
\Vert dv/{dt} \Vert_{L^2(0,T; L^2(\mathbb{R}^2))}+ \Vert A v \Vert_{L^2(0,T; L^2(\mathbb{R}^2))}\\
 \leq C_\star (\Vert v_0 \Vert_{W^{1,2}(\mathbb{R}^2)} + \Vert F \Vert_{L^2(0,T; L^2(\mathbb{R}^2))}+ \Vert Bv \Vert_{L^2(0,T; L^2(\mathbb{R}^2))}).
\end{multline}
Using \eqref{eq47} and \eqref{eq51}, we observe that
\begin{multline}\label{eq54}
C_\star \Vert B v \Vert_{L^2(0,T; L^2(\mathbb{R}^2))}\\
 \leq 2C_\star \Vert \nabla_X \varphi \Vert_{L^\infty (\mathbb{R}^2)}^2 \Vert A v \Vert_{L^2(0,T; L^2(\mathbb{R}^2))} + C(\kappa , \mathcal{M})  \Vert \nabla_X v \Vert_{L^2(0,T; L^2(\mathbb{R}^2))}\\
\leq (1/8) \Vert A v \Vert_{L^2(0,T; L^2(\mathbb{R}^2))} + C(\kappa , \mathcal{M}) \Vert \nabla_X v \Vert_{L^2(0,T; L^2(\mathbb{R}^2))}.
\end{multline}
From \eqref{eq53} and \eqref{eq54}, we have
\begin{multline}\label{eq55}
\Vert dv/{dt} \Vert_{L^2(0,T; L^2(\mathbb{R}^2))}+(7/8) \Vert A v \Vert_{L^2(0,T; L^2(\mathbb{R}^2))}\\
 \leq C_\star (\Vert v_0 \Vert_{W^{1,2}(\mathbb{R}^2)} + \Vert F \Vert_{L^2(0,T; L^2(\mathbb{R}^2))} ) \\
 + C (\kappa , \mathcal{M}) \Vert \nabla_X v \Vert_{L^2(0,T; L^2(\mathbb{R}^2))}.
\end{multline}
Since $L = A +B$, we use \eqref{eq54} and \eqref{eq55} to find that
\begin{multline}\label{eq56}
\Vert dv/{dt} \Vert_{L^2(0,T; L^2(\mathbb{R}^2))} + \Vert L v \Vert_{L^2(0,T; L^2(\mathbb{R}^2))}\\
 \leq C (\kappa , \mathcal{M}) (\Vert v_0 \Vert_{W^{1,2}(\mathbb{R}^2)} + \Vert F \Vert_{L^2(0,T; L^2(\mathbb{R}^2))}  + \Vert \nabla_X v \Vert_{L^2(0,T; L^2(\mathbb{R}^2))}).
\end{multline}
Now we consider $\Vert \nabla_X v \Vert_{L^2(0,T; L^2(\mathbb{R}^2))}$. By the same argument as in the proof of Proposition \ref{prop41}, we see that for all $t>0$
\begin{multline*}
\Vert G^{1/4} v(t)  \Vert_{L^2(\mathbb{R}^2)}^2 + 2 \kappa \int_0^t \int_{\mathbb{R}^2}g^{\alpha \beta} \frac{\partial v }{\partial X_\alpha} \frac{ \partial v }{\partial X_\beta} \sqrt{G} { \ }dX d\tau\\
 = \Vert G^{1/4} v_0 \Vert_{L^2(\mathbb{R}^2)}^2 + \int_0^t \left< F ,  v\sqrt{G} \right>_{L^2(\mathbb{R}^2)} { \ }d \tau. 
\end{multline*}
Using Lemma \ref{lem42}, we observe that
\begin{multline}\label{eq57}
\Vert v(t) \Vert_{L^2(\mathbb{R}^2)}^2 + \frac{2}{\sqrt{3}} \kappa \int_0^t \Vert \nabla_X v(\tau) \Vert_{L^2(\mathbb{R}^2)}^2 { \ } d\tau\\
 \leq \sqrt{3} \Vert v_0 \Vert_{L^2(\mathbb{R}^2)}^2 + \bigg\vert \int_0^t \left< F ,  v\sqrt{G}\right>_{L^2(\mathbb{R}^2)} { \ }d \tau \bigg\vert.
\end{multline}
This implies that
\begin{multline}\label{eq58}
\int_0^T \Vert \nabla_X v(\tau) \Vert_{L^2(\mathbb{R}^2)}^2 { \ } d\tau\\
 \leq \frac{3}{2 \kappa} \Vert v_0 \Vert_{L^2(\mathbb{R}^2)}^2 + \frac{\sqrt{3}}{2 \kappa} \bigg\vert \int_0^T \left< F ,  v\sqrt{G}\right>_{L^2(\mathbb{R}^2)} { \ }d \tau \bigg\vert.
\end{multline}
Using the Cauchy-Schwarz inequality with \eqref{eq41} and \eqref{EQ4019}, we see that for all $0 <t <T$,
\begin{align*}
\Vert v (t) \Vert_{L^2( \mathbb{R}^2)} & \leq \Vert {\rm{e}}^{-t L} v_0 \Vert_{L^2(\mathbb{R}^2)} + \int_0^t \Vert {\rm{e}}^{- (t- \tau ) L} F (\tau) \Vert_{L^2(\mathbb{R}^2)} { \ }d\tau\\
& \leq C \Vert v_0 \Vert_{L^2 (\mathbb{R}^2)} + C T^{1/2} \Vert F \Vert_{L^2(0,T;L^2(\mathbb{R}^2))}.
\end{align*}
This implies that
\begin{equation}\label{eq59}
\sup_{0 < t < T} \Vert v (t) \Vert_{L^2 (\mathbb{R}^2)} \leq C \Vert v_0 \Vert_{L^2 (\mathbb{R}^2)} + C T^{1/2} \Vert F \Vert_{L^2(0,T;L^2(\mathbb{R}^2))}.
\end{equation}
Applying the H\"{o}lder inequality, $ab \leq a^2/2 + b^2/2$, and \eqref{eq59}, we find that
\begin{multline}\label{EQ5010}
\bigg\vert \int_0^T \left< F , v \sqrt{G} \right>_{L^2(\mathbb{R}^2)} { \ }d\tau \bigg\vert \leq C \Vert v \Vert_{L^\infty(0,T;L^2( \mathbb{R}^2))} \Vert F \Vert_{L^2(0,T; L^2( \mathbb{R}^2) ) } T^{1/2} \\
 \leq C \Vert v_0 \Vert_{L^2 (\mathbb{R}^2)}^2 + C T \Vert F \Vert_{L^2(0,T;L^2(\mathbb{R}^2))}^2.
\end{multline}
From \eqref{eq58} and \eqref{EQ5010}, we observe that
\begin{equation}\label{EQ5011}
\Vert \nabla_X v \Vert_{L^2 (0,T ; L^2(\mathbb{R}^2))} \leq C \Vert v_0 \Vert_{L^2 (\mathbb{R}^2)} + C T^{1/2} \Vert F \Vert_{L^2(0,T;L^2(\mathbb{R}^2))}.
\end{equation}
By \eqref{eq56} and \eqref{EQ5011}, we have
\begin{multline*}
\Vert dv/{dt} \Vert_{L^2(0,T; L^2(\mathbb{R}^2))}+ \Vert L v \Vert_{L^2(0,T; L^2(\mathbb{R}^2))}\\
\leq C(\kappa , \mathcal{M})  (\Vert v_0 \Vert_{W^{1,2}(\mathbb{R}^2)} + (1 + \sqrt{T}) \Vert F \Vert_{L^2(0,T; L^2(\mathbb{R}^2))}).
\end{multline*}
From Lemma \ref{lem24}, we see $(\rm{i})$ if \eqref{eq52} holds.

Next, we show $(\rm{ii})$ and $(\rm{iii})$. In the proof of $(\rm{ii})$, we assume that there is\\ $Q = { }^t (Q_1,Q_2,Q_3) \in L^2(0,\infty; W^{1,2}(\mathbb{R}^2))$ such that $F = g^{\alpha \beta}g_\beta \cdot \frac{\partial Q}{\partial X_\alpha}$ and that $Q \cdot (g_1 \times g_2) =0$. By the same argument as in the proof of $(\rm{i})$, we see that for each $T>0$
\eqref{eq56} and \eqref{eq58} hold. Since $F = g^{\alpha \beta}g_\beta \cdot \frac{\partial Q}{\partial X_\alpha}$ and $Q \cdot (g_1 \times g_2) =0$, we use \eqref{eq36} to check that
\begin{align*}
\bigg\vert \int_0^T \left< F ,  v\sqrt{G}\right>_{L^2(\mathbb{R}^2)} { \ }d \tau \bigg\vert &= \bigg\vert \int_0^T \int_{\mathbb{R}^2} \bigg( g^{\alpha \beta}g_\beta \cdot \frac{\partial Q }{\partial X_\alpha} \bigg) v \sqrt{G} { \ }dX d \tau \bigg\vert\\
& = \bigg\vert - \int_0^T \int_{\mathbb{R}^2} Q \cdot g^{\alpha \beta} g_\alpha \frac{ \partial v }{\partial X_\beta} \sqrt{G} { \ }dX d\tau \bigg\vert.
\end{align*}
Using the Cauchy-Schwarz inequality and $2ab \leq a^2 +b^2$, we see that
\begin{align}
\bigg\vert \int_0^T \left< F ,  v\sqrt{G}\right>_{L^2(\mathbb{R}^2)} { \ }d \tau \bigg\vert \leq C \Vert Q \Vert_{L^2(0,T; L^2(\mathbb{R}^2))}\Vert \nabla_X v \Vert_{L^2(0,T; L^2(\mathbb{R}^2))}\notag\\
\leq \frac{1}{8} \Vert \nabla_X v \Vert_{L^2(0,T; L^2(\mathbb{R}^2))}^2 + C \Vert Q \Vert_{L^2(0,T; L^2(\mathbb{R}^2))}^2.\label{EQ5012}
\end{align}
Note that $\Vert g^{\alpha \beta} g_\beta \Vert_{L^\infty (\mathbb{R}^2)} \leq C$ since $\Vert \nabla_X \varphi \Vert_{L^\infty} \leq 1/4$. From \eqref{eq58} and \eqref{EQ5012}, we have
\begin{equation}\label{EQ5013}
\int_0^T \Vert \nabla_X v(\tau) \Vert_{L^2(\mathbb{R}^2)}^2 { \ } d\tau \leq C \Vert v_0 \Vert_{L^2(\mathbb{R}^2)}^2 + C \Vert Q \Vert_{L^2(0,T; L^2(\mathbb{R}^2))}^2.
\end{equation}
By \eqref{eq56} and \eqref{EQ5013}, we have
\begin{multline*}
\Vert dv/{dt} \Vert_{L^2(0,T; L^2(\mathbb{R}^2))}+ \Vert L v \Vert_{L^2(0,T; L^2(\mathbb{R}^2))}\\
\leq C(\kappa, \mathcal{M}) (\Vert v_0 \Vert_{W^{1,2} (\mathbb{R}^2) } + \Vert F \Vert_{L^2(0,T; L^2(\mathbb{R}^2))} + \Vert Q \Vert_{L^2(0,T; L^2(\mathbb{R}^2))}).
\end{multline*}
Since $C(\kappa , \mathcal{M})$ does not depend on $T$, we choose $T = \infty$ to have
\begin{multline*}
\Vert dv/{dt} \Vert_{L^2(0,\infty; L^2(\mathbb{R}^2))}+ \Vert L v \Vert_{L^2(0,\infty; L^2(\mathbb{R}^2))}\\
 \leq C(\kappa , \mathcal{M}) (\Vert v_0 \Vert_{W^{1,2} (\mathbb{R}^2)} + \Vert F \Vert_{L^2(0,\infty; L^2(\mathbb{R}^2))} + \Vert Q \Vert_{L^2(0,\infty; L^2(\mathbb{R}^2))}).
\end{multline*}
From Lemma \ref{lem24}, we see $(\rm{ii})$ if \eqref{eq52} holds. In the case when $F \equiv 0$, we have
\begin{equation}\label{EQ5014}
\Vert dv/{dt} \Vert_{L^2(0,\infty; L^2(\mathbb{R}^2))}+ \Vert L v \Vert_{L^2(0,\infty; L^2(\mathbb{R}^2))}\leq C(\kappa , \mathcal{M}) \Vert v_0 \Vert_{W^{1,2} (\mathbb{R}^2)}.
\end{equation}
From Lemma \ref{lem24}, we see $(\rm{iii})$ if \eqref{eq52} holds.

Let us now prove \eqref{eq52}. To this end, for $m \in \mathbb{N}$ we consider the following systems:
\begin{equation}\label{EQ5015}
\begin{cases}
\frac{d}{dt}v_1 + A v_1 = F \text{ on } (0,T_*),\\
v_1 \vert_{t =0} = v_0,
\end{cases}
\end{equation}
\begin{equation}\label{EQ5016}
\begin{cases}
\frac{d}{dt}v_{m+1} + A v_{m+1} = F - B v_m \text{ on } (0,T_*),\\
v_{m+1} \vert_{t =0} = v_0.
\end{cases}
\end{equation}
Here $T_* >0$ such that
\begin{equation*}
T_* \leq \min \{ 1, C_\star^2 \}.
\end{equation*}
Since $v_0 \in W^{1,2} (\mathbb{R}^2)$, $F \in L^2 (0,\infty;L^2(\mathbb{R}^2)) \cap C^\eta_{loc} ((0,\infty) ; L^2(\mathbb{R}^2))$, and $-A$ generates an analytic semigroup on $L^2(\mathbb{R}^2)$ from Lemma \ref{lem31}, we see that system \eqref{EQ5015} admits a unique strong solution $v_1$ in 
\begin{equation}\label{EQ5017}
C ([0,T_*);L^2 (\mathbb{R}^2) \cap C ((0,T_*);W^{2,2} (\mathbb{R}^2) \cap C^1 ((0,T_*);L^2 (\mathbb{R}^2)).
\end{equation}
Moreover, we see that $v_1$ is represented by
\begin{equation}\label{EQ5018}
v_1 (t) = {\rm{e}}^{-t A} v_0 + \int_0^t {\rm{e}}^{- ( t - \tau ) A} F ( \tau ) { \ }d \tau { \ }(t > 0).
\end{equation}
Since ${\rm{e}}^{- t A}$ is a contraction $C_0$-semigroup on $L^2(\mathbb{R}^2)$, we use \eqref{EQ5018} and the Cauchy-Schwarz inequality to check that for $t < T_*$
\begin{equation*}
\Vert v_1 (t) \Vert_{L^2(\mathbb{R}^2)} \leq \Vert v_0 \Vert_{L^2(\mathbb{R}^2)} + t^{1/2} \Vert F \Vert_{L^2(0, \infty; L^2(\mathbb{R}^2))} .
\end{equation*} 
From $T_* \leq 1$, we see that 
\begin{align}
\Vert v_1 \Vert_{L^\infty (0,T_*;L^2(\mathbb{R}^2))} & \leq \Vert v_0 \Vert_{L^2(\mathbb{R}^2)} + T_*^{1/2} \Vert F \Vert_{L^2(0, \infty; L^2(\mathbb{R}^2))}\notag\\
& \leq  \Vert v_0 \Vert_{L^2(\mathbb{R}^2)} + \Vert F \Vert_{L^2(0, \infty; L^2(\mathbb{R}^2))} < + \infty.\label{EQ5019}
\end{align} 
Here
\begin{equation*}
\Vert v_1 \Vert_{L^\infty (0,T_*;L^2(\mathbb{R}^2))} := \sup_{0 < t < T_*} \Vert v_1 (t) \Vert_{L^2(\mathbb{R}^2)}.
\end{equation*} 
Since $L^\infty(0,T_*;L^2(\mathbb{R}^2)) \subset L^2(0,T_*;L^2(\mathbb{R}^2))$, we find that
\begin{equation}\label{EQ5020}
v_1 \in L^2(0,T_*;L^2(\mathbb{R}^2)).
\end{equation}
From \eqref{eq32}, we see that
\begin{multline}\label{EQ5021}
\Vert d v_1/{dt} \Vert_{L^2(0,T_*;L^2(\mathbb{R}^2))} + \Vert A v_1 \Vert_{L^2(0,T_*;L^2(\mathbb{R}^2))}\\
\leq C_\star \Vert v_0  \Vert_{W^{1,2} (\mathbb{R}^2)} + C_\star \Vert F \Vert_{L^2(0,T_*;L^2(\mathbb{R}^2))}\\
\leq C_\star \Vert v_0  \Vert_{W^{1,2} (\mathbb{R}^2)} + C_\star \Vert F \Vert_{L^2(0,\infty;L^2(\mathbb{R}^2))} < + \infty.
\end{multline}
From \eqref{EQ5020}, \eqref{EQ5021}, \eqref{EQ4015}, and \eqref{eq33}, we see that
\begin{equation*}
B v_1 \in C_{loc}^{\eta} ((0,T_*);L^2(\mathbb{R}^2)) \cap L^2(0,T_*;L^2(\mathbb{R}^2)).
\end{equation*}
Since  $v_0 \in W^{1,2} (\mathbb{R}^2)$, $F - Bv_1 \in L^2 (0,T_*;L^2(\mathbb{R}^2)) \cap C^{ \eta }_{loc} ((0,T_*) ; L^2(\mathbb{R}^2))$, and $-A$ generates an analytic semigroup on $L^2(\mathbb{R}^2)$, we see that system \eqref{EQ5016} when $m=1$ admits a unique strong solution $v_2$ in \eqref{EQ5017}. Moreover, $v_2$ is represented by
\begin{equation*}
v_2 (t) = {\rm{e}}^{-t A} v_0 + \int_0^t {\rm{e}}^{- ( t - \tau ) A} F ( \tau ) { \ }d \tau - \int_0^t {\rm{e}}^{- ( t - \tau ) A} B v_1 ( \tau ) { \ }d \tau  { \ }(t > 0).
\end{equation*}
Since ${\rm{e}}^{- t A}$ is a contraction $C_0$-semigroup on $L^2(\mathbb{R}^2)$, we use the Cauchy-Schwarz inequality to check that for $t < T_*$
\begin{equation*}
\Vert v_2 (t) \Vert_{L^2(\mathbb{R}^2)} \leq \Vert v_0 \Vert_{L^2(\mathbb{R}^2)} + t^{1/2} \Vert F \Vert_{L^2(0, t; L^2(\mathbb{R}^2))}  + t^{1/2} \Vert Bv_m \Vert_{L^2(0, t; L^2(\mathbb{R}^2))}.
\end{equation*} 
Since $T_* \leq \min \{ 1, C_\star^2 \}$, we have
\begin{equation*}
\Vert v_2 \Vert_{L^\infty(0,T_*; L^2(\mathbb{R}^2) )} \leq \Vert v_0 \Vert_{L^2(\mathbb{R}^2)} + \Vert F \Vert_{L^2(0, \infty; L^2(\mathbb{R}^2))}  + C_\star \Vert Bv_1 \Vert_{L^2(0, T_*; L^2(\mathbb{R}^2))}.
\end{equation*} 
This implies that
\begin{equation}\label{EQ5022}
v_2 \in L^2(0,T_*; L^2(\mathbb{R}^2)).
\end{equation}
From \eqref{eq32}, we find that
\begin{multline}\label{EQ5023}
\Vert d v_2/{dt} \Vert_{L^2(0,T_*;L^2(\mathbb{R}^2))} + \Vert A v_2 \Vert_{L^2(0,T_*;L^2(\mathbb{R}^2))}\\
 \leq C_\star \Vert v_0  \Vert_{W^{1,2} (\mathbb{R}^2)} + C_\star \Vert F \Vert_{L^2(0,\infty;L^2(\mathbb{R}^2))} + C_\star \Vert B v_1 \Vert_{L^2(0,T_*;L^2(\mathbb{R}^2))} \\
 < + \infty .
\end{multline}
From \eqref{EQ5022}, \eqref{EQ5023}, \eqref{EQ4015} and \eqref{eq33}, we see that
\begin{equation*}
B v_2 \in C_{loc}^{ \eta } ((0,T_*);L^2(\mathbb{R}^2)) \cap L^2(0,T_*;L^2(\mathbb{R}^2)).
\end{equation*}
By induction, we see that for each $m \in \mathbb{N}$ system \eqref{EQ5016} admits a unique strong solution $v_{m+1}$ in \eqref{EQ5017}, satisfying that
\begin{equation}\label{EQ5024}
v_{m+1} (t) = {\rm{e}}^{-t A} v_0 + \int_0^t {\rm{e}}^{- ( t - \tau ) A} F ( \tau ) { \ }d \tau - \int_0^t {\rm{e}}^{- ( t - \tau ) A} B v_m ( \tau ) { \ }d \tau,
\end{equation}
\begin{multline}\label{EQ5025}
\Vert v_{m+1} \Vert_{L^\infty(0,T_*; L^2(\mathbb{R}^2))}\\
 \leq \Vert v_0 \Vert_{L^2(\mathbb{R}^2)} + \Vert F \Vert_{L^2(0, \infty; L^2(\mathbb{R}^2))}  + C_\star \Vert Bv_m \Vert_{L^2(0, T_*; L^2(\mathbb{R}^2))},
\end{multline} 
\begin{multline}\label{EQ5026}
\Vert d v_{m+1}/{dt} \Vert_{L^2(0,T_*;L^2(\mathbb{R}^2))} + \Vert A v_{m+1} \Vert_{L^2(0,T_*;L^2(\mathbb{R}^2))}\\
 \leq C_\star \Vert v_0  \Vert_{W^{1,2} (\mathbb{R}^2)} + C_\star \Vert F \Vert_{L^2(0,\infty;L^2(\mathbb{R}^2))} + C_\star \Vert B v_m \Vert_{L^2(0,T_*;L^2(\mathbb{R}^2))}\\ < + \infty ,
\end{multline}
$v_{m+1} \in L^2(0,T_* ; L^2(\mathbb{R}^2))$, and that
\begin{equation*}
B v_{m+1} \in C_{loc}^{ \eta } ((0,T_*);L^2(\mathbb{R}^2)) \cap L^2(0,T_*;L^2(\mathbb{R}^2)).
\end{equation*}

Set $X_T$ as follows:
\begin{equation*}
X_T = \{ \Phi \in L^2(0,T;L^2 (\mathbb{R}^2));{ \ } \Vert \Phi \Vert_{X_T} < \infty \},
\end{equation*}
where
\begin{equation*}
\Vert \Phi \Vert_{X_T} := \Vert \Phi \Vert_{L^\infty (0,T;L^2(\mathbb{R}^2))} +  \Vert d \Phi/{dt} \Vert_{L^2(0,T;L^2(\mathbb{R}^2))} + \Vert A \Phi \Vert_{L^2(0,T;L^2(\mathbb{R}^2))}.
\end{equation*}
Note that $L^\infty (0,T;L^2(\mathbb{R}^2)) \subset L^2(0,T; L^2(\mathbb{R}^2))$ since $T < \infty$. Now we prove that there are $T_{**} = T_{**}(\kappa, \mathcal{M} , \Vert \nabla_X \varphi \Vert_{L^\infty (\mathbb{R}^2)}) >0$ and $v_\infty \in X_{T_{**}}$ such that
\begin{equation*}
\lim_{m \to \infty} \Vert v_m - v_\infty \Vert_{X_{T_{**}}} = 0 .
\end{equation*}
By \eqref{EQ5019} and \eqref{EQ5021}, we have
\begin{equation}\label{EQ5027}
\Vert v_1 \Vert_{X_{T_*}} \leq (C_\star + 1) (\Vert v_0 \Vert_{W^{1,2}(\mathbb{R}^2)} +\Vert F \Vert_{L^2(0, \infty; L^2(\mathbb{R}^2))}).
\end{equation}
Now we prove that there is $T_{**} = T_{**} (\kappa, \mathcal{M} , \Vert \nabla_X \varphi \Vert_{L^\infty (\mathbb{R}^2)}) >0$ such that for each $m \in \mathbb{N}$
\begin{equation*}
\Vert v_m \Vert_{X_{T_{**}}} \leq 2 (C_\star + 1) (\Vert v_0 \Vert_{W^{1,2}(\mathbb{R}^2)} +\Vert F \Vert_{L^2(0, \infty; L^2(\mathbb{R}^2))}).
\end{equation*}
By \eqref{EQ5025} and \eqref{EQ5026}, we obtain
\begin{multline}\label{EQ5028}
\Vert v_{m+1} \Vert_{X_{T_*}} \leq (C_\star + 1) (\Vert v_0 \Vert_{L^2(\mathbb{R}^2)} + \Vert F \Vert_{L^2(0, \infty; L^2(\mathbb{R}^2))})\\
  + 2 C_\star \Vert Bv_m \Vert_{L^2(0, T_*; L^2(\mathbb{R}^2))}.
\end{multline} 
Using \eqref{eq47} and the interpolation theory, we check that
\begin{multline*}
\Vert B v_m \Vert_{L^2(\mathbb{R}^2)}\\
 \leq \frac{3}{2} \Vert \nabla_X \varphi \Vert_{L^\infty(\mathbb{R}^2)}^2 \Vert A v_m  \Vert_{L^2(\mathbb{R}^2)} + C_{\dagger}(\kappa, \mathcal{M}, \Vert \nabla_X \varphi \Vert_{L^\infty (\mathbb{R}^2)}) \Vert v_m  \Vert_{L^2(\mathbb{R}^2)}.
\end{multline*}
This gives
\begin{multline*}
2 C_\star \Vert B v_m \Vert_{L^2(0,T_*;L^2(\mathbb{R}^2))}\\
\leq 6 C_\star \Vert \nabla_X \varphi \Vert_{L^\infty}^2 \Vert A v_m  \Vert_{L^2(0,T_*;L^2(\mathbb{R}^2))} + 4 C_\dagger T_*^{1/2} \Vert v_m  \Vert_{L^\infty(0,T_*;L^2(\mathbb{R}^2))}.
\end{multline*}
We choose $T_{**} >0$ such that $4 C_\dagger T_{**}^{1/2} \leq 1/2$ and $T_{**} \leq T_*$. Since $\Vert \nabla_X \varphi \Vert_{L^\infty} \leq 1/(4\sqrt{C_\star})$, we see that $6 C_\star \Vert \nabla_X \varphi \Vert_{L^\infty}^2 \leq 1/2$. Thus, we have
\begin{equation}\label{EQ5029}
2 C_\star \Vert B v_m \Vert_{L^2(0,T_{**};L^2(\mathbb{R}^2))} \leq \frac{1}{2} \Vert v_m \Vert_{X_{T_{**}}}.
\end{equation}
From \eqref{EQ5028} and \eqref{EQ5029}, we find that for each $m \in \mathbb{N}$
\begin{equation}\label{EQ5030}
\Vert v_{m+1} \Vert_{X_{T_{**}}} \leq (C_\star + 1) (\Vert v_0 \Vert_{L^2(\mathbb{R}^2)} + \Vert F \Vert_{L^2(0, \infty; L^2(\mathbb{R}^2))})  + \frac{1}{2} \Vert v_m \Vert_{X_{T_{**}}}.
\end{equation} 
By \eqref{EQ5027} and \eqref{EQ5030}, we see that for each $m \in \mathbb{N}$
\begin{equation}\label{EQ5031}
\Vert v_m \Vert_{X_{T_{**}}} \leq 2 (C_\star + 1) (\Vert v_0 \Vert_{L^2(\mathbb{R}^2)} + \Vert F \Vert_{L^2(0, \infty; L^2(\mathbb{R}^2))})<+\infty.
\end{equation} 
Next, we prove that
\begin{equation*}
\lim_{m \to \infty} \Vert v_{m+1} - v_m \Vert_{X_{T_{**}}} =0.
\end{equation*}
It is clear that for $m \in \mathbb{N}$
\begin{equation*}
\begin{cases}
\frac{d}{dt}(v_{m+2}- v_{m+1}) + A (v_{m+2} - v_{m+1}) = - B (v_{m+1} - v_m) \text{ on } (0,T_{**}),\\
(v_{m+2} - v_{m+1}) \vert_{t =0} =0.
\end{cases}
\end{equation*}
From \eqref{eq32}, we have
\begin{multline}\label{EQ5032}
\Vert d (v_{m+2} - v_{m+1})/{dt} \Vert_{L^2(0,T_{**};L^2(\mathbb{R}^2))} + \Vert A (v_{m+2} - v_{m+1}) \Vert_{L^2(0,T_{**};L^2(\mathbb{R}^2))}\\
 \leq C_\star \Vert B (v_{m+1} - v_m) \Vert_{L^2(0,T_{**};L^2(\mathbb{R}^2))}.
\end{multline}
Since
\begin{equation*}
v_{m+2} (t) - v_{m+1} (t) = - \int_0^t {\rm{e}}^{- ( t - \tau ) A} B(v_{m+1}(\tau) - v_{m}(\tau)) { \ }d \tau
\end{equation*}
from \eqref{EQ5024}, we use the Cauchy-Schwarz inequality and $T_{**} \leq T_* \leq C_\star^2$ to see that
\begin{multline}\label{EQ5033}
\sup_{0 < t < T_{**}} \Vert v_{m+2} (t) - v_{m+1} (t) \Vert_{L^2(\mathbb{R}^2)} \leq T_{**}^{1/2} \Vert B(v_{m+1} - v_{m}) \Vert_{L^2(0,T_{**};L^2(\mathbb{R}^2))}\\
\leq C_\star \Vert B(v_{m+1} - v_{m}) \Vert_{L^2(0,T_{**};L^2(\mathbb{R}^2))}.
\end{multline}
By \eqref{EQ5032} and \eqref{EQ5033}, we have
\begin{equation*}
\Vert v_{m+2} - v_{m+1} \Vert_{X_{T_{**}}} \leq 2C_\star \Vert B (v_{m+1} - v_m) \Vert_{X_{T_{**}}}.
\end{equation*}
By the previous argument to derive \eqref{EQ5029}, we find that for each $m \in \mathbb{N}$
\begin{align*}
\Vert v_{m+2} - v_{m+1} \Vert_{X_{T_{**}}} \leq \frac{1}{2} \Vert v_{m+1} - v_m \Vert_{X_{T_{**}}}.
\end{align*}
This implies that
\begin{align*}
\Vert v_{m+2} - v_{m+1} \Vert_{X_{T_{**}}} & \leq \frac{1}{2^m}  \Vert v_2 - v_1 \Vert_{X_{T_{**}}}\\
 & \leq \frac{1}{2^m}  ( \Vert v_2 \Vert_{X_{T_{**}}} + \Vert v_1 \Vert_{X_{T_{**}}}).
\end{align*}
By \eqref{EQ5031}, we check that
\begin{multline}\label{EQ5034}
\Vert v_{m+2} - v_{m+1} \Vert_{X_{T_{**}}} \leq \frac{4(C_\star +1)}{2^m}  ( \Vert v_0 \Vert_{W^{1,2} (\mathbb{R}^2)} + \Vert F \Vert_{L^2(0,\infty; L^2(\mathbb{R}^2))}  )\\
 \to 0 \text{ as }m \to \infty.
\end{multline}

From a fixed-point argument, there exists a unique function $v_\infty \in X_{T_{**}}$ such that
\begin{equation}\label{EQ5035}
\lim_{m \to \infty} \Vert v_m - v_\infty \Vert_{X_{T_{**}}} = 0 .
\end{equation}
Applying \eqref{EQ5035} into system \eqref{EQ5016} and \eqref{EQ5024}, we find that
\begin{equation*}
dv_\infty/{dt} + Av_\infty = F - B v_\infty \text{ on }(0,T_{**}),
\end{equation*}
and
\begin{equation*}
v_\infty (t) = {\rm{e}}^{-t A} v_0 + \int_0^t {\rm{e}}^{- ( t - \tau ) A} F ( \tau ) { \ }d \tau - \int_0^t {\rm{e}}^{- ( t - \tau ) A} B v_\infty ( \tau ) { \ }d \tau{ \ }(0<t<T_{**}) .
\end{equation*}
Using the Cauchy-Schwarz inequality, we check that
\begin{multline*}
\Vert v_\infty (t) - v_0 \Vert_{L^2(\mathbb{R}^2)}\\ = \Vert {\rm{e}}^{-t A} v_0 -v_0 \Vert_{L^2(\mathbb{R}^2)} + t^{1/2} \Vert F \Vert_{L^2(0,\infty;L^2(\mathbb{R}^2))} + t^{1/2} \Vert Bv_\infty \Vert_{L^2(0,T_{**};L^2(\mathbb{R}^2))}.
\end{multline*}
Since ${\rm{e}}^{-t A}$ is a $C_0$-semigroup on $L^2(\mathbb{R}^2)$, we see that
\begin{equation*}
\lim_{t \to 0 +0} \Vert v_\infty (t) - v_0 \Vert_{L^2(\mathbb{R}^2)} = 0.
\end{equation*}
That is, $v_\infty$ satisfies
\begin{equation}\label{EQ5036}
\begin{cases}
dv_\infty/{dt} + Lv_\infty = F \text{ on }(0,T_{**}),\\
v_\infty\vert_{t =0} = v_0,
\end{cases}
\end{equation}
From the uniqueness of the solutions to system \eqref{EQ5036}, we see that $v= v_\infty$ on $[0,T_{**})$.

Now we consider the two cases when $T \leq T_{**}$ and $T > T_{**}$. We first consider the case when $T \leq T_{**}$. It is clear that
\begin{equation*}
\Vert B v \Vert_{L^2(0,T;L^2(\mathbb{R}^2))} = \Vert B v_\infty \Vert_{L^2(0,T;L^2(\mathbb{R}^2))} < + \infty.
\end{equation*}
Next, we consider the case when $T > T_{**}$. Since $v \in C([T_{**}, T] ; W^{2,2} (\mathbb{R}^2) )$, we use Lemma \ref{lem23} to see that
\begin{multline*}
\Vert B v \Vert_{L^2(0,T;L^2(\mathbb{R}^2))} = \Vert B v_\infty \Vert_{L^2(0,T_{**};L^2(\mathbb{R}^2))} + \Vert B v \Vert_{L^2(T_{**},T;L^2(\mathbb{R}^2))}\\
\leq \Vert B v_\infty \Vert_{L^2(0,T_{**};L^2(\mathbb{R}^2))} + C T^{1/2} \Vert v \Vert_{L^\infty (T_{**},T;W^{2,2}(\mathbb{R}^2))} < + \infty.
\end{multline*}
Thus, we see \eqref{eq52}.

Finally, we show $(\rm{iv})$. Assume that $F \in L^2(\mathbb{R}^2 \times (0,\infty))$. Let us recall the heat kernels $\mathcal{E}[t] = \mathcal{E} (X ,t) = {\rm{exp}}(- \vert X \vert^2/{4\kappa t})/{ 4 \pi \kappa t}$ for $\mathbb{R}^2$. Since ${\rm{e}}^{- tA} f = \mathcal{E}[t]*f$ for $f \in L^2(\mathbb{R}^2)$, we consider \eqref{EQ5018} and \eqref{EQ5024} as
\begin{align*}
v_1 (t) & =  \mathcal{E}[t] *v_0 + \int_0^t \mathcal{E}[t-\tau]* F ( \tau ) { \ }d \tau,\\
v_{m+1} (t) & = \mathcal{E}[t]* v_0 + \int_0^t \mathcal{E}[t-\tau]* F ( \tau ) { \ }d \tau - \int_0^t \mathcal{E}[t-\tau]* B v_m ( \tau ) { \ }d \tau .
\end{align*}
We easily check that for each $m \in \mathbb{N}$, $v_m \in L^2( \mathbb{R}^2 \times (0,T))$. By \eqref{EQ5034} and $v_m \in L^2( 0,T; L^2(\mathbb{R}^2))$, we see that
\begin{equation*}
\Vert v_{m+1} - v_m \Vert_{L^2( \mathbb{R}^2 \times (0,T_{**}) )} = \Vert v_{m+1} - v_m \Vert_{L^2(0,T_{**};L^2(\mathbb{R}^2) )} \to 0  \text{ as }m \to \infty.
\end{equation*}
This shows that there is $\tilde{v} \in L^2( \mathbb{R}^2 \times (0,T))$ such that
\begin{equation*}
\Vert v_{m} - \tilde{v} \Vert_{L^2( \mathbb{R}^2 \times (0,T_{**}) )}  \to 0  \text{ as }m \to \infty.
\end{equation*}
By \eqref{EQ5035}, we see that $\tilde{v} = v_\infty =v$ on $(0,T_{**})$. Since $T_{**}$ does not depend on the initial datum, we  repeat this argument for each interval $(mT_{**}, (m+1)T_{**})$ $(m \in \mathbb{N})$ to find that $v \in L^2(\mathbb{R}^2 \times (0, T ))$ for each fixed $T \in (0,\infty)$. Therefore, Theorem \ref{thm26} is proved.

  \end{proof}

%\newpage

\section{Weighted Laplace-Beltrami Operator}\label{sect6}

Let us study our weighted Laplace-Beltrami operator. Let $\kappa >0$ and $\varphi \in BC^2(\mathbb{R}^2)$. Let $L$, $A$, and $B$, be the three operators defined by \eqref{eq25}, \eqref{eq31}, and \eqref{eq46}, respectively.

\begin{proposition}\label{prop61}
Set the operator $\mathcal{L}$ on $L^2(\mathbb{R}^2)$ as follows:
\begin{equation}\label{eq61}
\begin{cases}
{\displaystyle{\mathcal{L} f = \sqrt{G} L f =- \kappa \frac{\partial}{\partial X_\alpha} \bigg( \sqrt{G} g^{\alpha \beta} \frac{\partial f }{\partial X_\beta} \bigg),}}\\
D ( \mathcal{L}) = W^{2,2} (\mathbb{R}^2)(=D(A)).
\end{cases}
\end{equation}
Assume that
\begin{equation*}
\Vert \nabla_X \varphi \Vert_{L^\infty (\mathbb{R}^2)} \leq \frac{1}{4}.
\end{equation*}
Then the four assertions hold:\\
\noindent $(\rm{i})$ The operator $-\mathcal{L}$ generates a $C_0$-semigroup on $L^2(\mathbb{R}^2)$.\\
\noindent $(\rm{ii})$ The operator $-\mathcal{L}$ generates an analytic semigroup on $L^2(\mathbb{R}^2)$.\\
\noindent $(\rm{iii})$ The operator $\mathcal{L}$ is a selfadjoint operator on $L^2(\mathbb{R}^2)$.\\
\noindent $(\rm{iv})$ $R (\mathcal{L})$ is dense in $L^2(\mathbb{R}^2)$, where $R(\mathcal{L})$ is the range of $\mathcal{L}$. 
\end{proposition}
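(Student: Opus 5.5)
We write $\mathcal{L} = A + \mathcal{B}$ with $D(\mathcal{B}) = D(A)$ and $\mathcal{B} := \mathcal{L} - A$, and run the perturbation argument of Proposition \ref{prop41} again. Expanding the divergence gives
\begin{equation*}
\mathcal{L} f = -\kappa \sqrt{G} g^{\alpha\beta}\frac{\partial^2 f}{\partial X_\alpha \partial X_\beta} - \kappa \frac{\partial (\sqrt{G} g^{\alpha\beta})}{\partial X_\alpha}\frac{\partial f}{\partial X_\beta}.
\end{equation*}
The first-order term is bounded by $C\kappa \Vert \nabla_X^2\varphi\Vert_{L^\infty}\Vert \nabla_X f\Vert_{L^2}$, using Lemma \ref{lem21} and \eqref{EQ4011}.

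The principal part is where the difference from Section \ref{sect4} lies. The coefficient matrix is $\sqrt{G}g^{\alpha\beta} = G^{-1/2}\big(\delta_{\alpha\beta}(1+\vert\nabla_X\varphi\vert^2) - \partial_{X_\alpha}\varphi\,\partial_{X_\beta}\varphi\big)$, so
\begin{equation*}
\sqrt{G}g^{\alpha\beta} - \delta_{\alpha\beta} = \big(\sqrt{G}-1\big)\delta_{\alpha\beta} - G^{-1/2}\partial_{X_\alpha}\varphi\,\partial_{X_\beta}\varphi .
\end{equation*}
We have $0 \le \sqrt{G}-1 \le \vert\nabla_X\varphi\vert^2/2$, and the rank-one part is handled exactly as in \eqref{eq48} via \eqref{eq37}. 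Together these give
\begin{equation*}
\Vert \mathcal{B} f\Vert_{L^2} \le c\,\Vert\nabla_X\varphi\Vert_{L^\infty}^2\Vert Af\Vert_{L^2} + C\Vert\nabla_X f\Vert_{L^2},
\end{equation*}
with $c$ an absolute constant ($c\le 2$ suffices). Under $\Vert\nabla_X\varphi\Vert_{L^\infty}\le 1/4$ this yields $c/16<1/2$. Interpolating $\Vert\nabla_X f\Vert_{L^2}\le \varepsilon\Vert Af\Vert_{L^2}+C_\varepsilon\Vert f\Vert_{L^2}$ then shows that $\mathcal{B}$ is $A$-bounded with relative bound $<1/2$. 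Since $-A$ generates a bounded analytic semigroup (Lemma \ref{lem31}), the analytic perturbation theorem (\cite[Chapter III]{EN00}, \cite[Chapter 3]{Paz83}) gives (ii), hence (i), with $D(\mathcal{L})=W^{2,2}(\mathbb{R}^2)$.

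For (iii), symmetry comes from \eqref{eq35}: for $f,\phi\in W^{2,2}(\mathbb{R}^2)$,
\begin{equation*}
\langle \mathcal{L}f,\phi\rangle_{L^2} = \kappa\int_{\mathbb{R}^2} g^{\alpha\beta}\frac{\partial f}{\partial X_\alpha}\frac{\partial\phi}{\partial X_\beta}\sqrt{G}\,dX = \langle f,\mathcal{L}\phi\rangle_{L^2},
\end{equation*}
since $g^{\alpha\beta}$ is symmetric. For self-adjointness we use Kato--Rellich: $A$ is self-adjoint on $W^{2,2}(\mathbb{R}^2)$, and $\mathcal{B}$ is symmetric on $D(A)$ (being a difference of symmetric operators) and $A$-bounded with relative bound $<1$. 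Hence $\mathcal{L}=A+\mathcal{B}$ is self-adjoint on $D(A)$. \eqref{eq44} also shows $\mathcal{L}\ge 0$.

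For (iv), density of the range reduces to $\mathcal{N}(\mathcal{L})=\{0\}$, because for a self-adjoint operator $\overline{R(\mathcal{L})} = \mathcal{N}(\mathcal{L}^*)^\perp = \mathcal{N}(\mathcal{L})^\perp$. Suppose $f\in W^{2,2}(\mathbb{R}^2)$ satisfies $\mathcal{L}f=0$. Pairing with $f$ and using \eqref{eq35} and \eqref{eq44} gives $0=\langle\mathcal{L}f,f\rangle\ge \frac{\kappa}{\sqrt3}\Vert\nabla_X f\Vert_{L^2}^2$. So $\nabla_X f=0$, and $f$ is constant; since $f\in L^2(\mathbb{R}^2)$, $f=0$.

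The main obstacle is the relative-bound estimate in the first step. Unlike $L$, the weight $\sqrt{G}$ makes the leading coefficient differ from $\delta_{\alpha\beta}$ by an isotropic term $(\sqrt{G}-1)\delta_{\alpha\beta}$ as well as the rank-one term, so the constant must be tracked carefully to stay below $1/2$ under the $1/4$ smallness assumption. Everything else is a direct application of \eqref{eq35}, Lemma \ref{lem42}, and standard perturbation theorems.
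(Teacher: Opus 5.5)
Your proof is correct. For (i), (ii) and (iv) it follows the paper's line; the real difference is in (iii).

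For (i)--(ii), the paper also perturbs off $A$, but it writes $\mathcal{L}-A=(\sqrt{G}-1)L+B$ and reuses \eqref{EQ4015} from Lemma \ref{lem43}, which gives relative bound $21/64$. You instead expand $\partial_{X_\alpha}(\sqrt{G}g^{\alpha\beta}\partial_{X_\beta}\,\cdot)$ directly and split the leading coefficients into the isotropic part $(\sqrt{G}-1)\delta_{\alpha\beta}$ and the rank-one part. That is self-contained and gives a smaller bound. For (iv), you use the same energy argument via \eqref{eq35} and \eqref{eq44}, and you make explicit the step $\nabla_X f=0\Rightarrow f=0$ in $L^2$.

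For (iii), the paper argues by hand. Integration by parts gives $\mathcal{L}\subset\mathcal{L}^*$. A real $\lambda_0\in\rho(-\mathcal{L})$, supplied by (ii), makes $\mathcal{L}^*+\lambda_0$ injective. Solving $(\mathcal{L}+\lambda_0)\phi_2=h_0$ then forces $D(\mathcal{L}^*)\subset W^{2,2}(\mathbb{R}^2)$. So the paper's (iii) rests on the generation result. You use Kato--Rellich instead, which needs only three things: $A$ is self-adjoint, $\mathcal{B}$ is symmetric on $D(A)$, and the relative bound is $<1$, which you already have. This makes (iii) independent of semigroup theory.

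Your route gives more than brevity. Once $\mathcal{L}$ is self-adjoint and $\mathcal{L}\ge 0$ by \eqref{eq44}, the spectral theorem shows directly that $-\mathcal{L}$ generates a contraction analytic semigroup of angle $\pi/2$. So you could obtain (i)--(ii) from (iii) and skip the analytic perturbation theorem entirely, including the question of whether its implicit threshold exceeds your relative bound. The paper's route is more elementary; yours is shorter and gives the sharper conclusion.
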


\begin{proof}[Proof of Proposition \ref{prop61}]
Assume that $\Vert \nabla_X \varphi \Vert_{L^\infty (\mathbb{R}^2)} \leq 1/4$. We first show $(\rm{i})$ and $(\rm{ii})$. Using $\mathcal{L} = \sqrt{G}L$, $L = A + B$, and \eqref{EQ4015}, we check that for all $f \in D (A)$
\begin{align*}
\Vert \mathcal{L} f - A f \Vert_{L^2(\mathbb{R}^2)} & =\Vert \mathcal{L} f - L f + Lf - A f \Vert_{L^2(\mathbb{R}^2)}\\
& \leq \Vert (\sqrt{G} - 1) L f \Vert_{L^2(\mathbb{R}^2)} + \Vert B f \Vert_{L^2(\mathbb{R}^2)}\\
& \leq \Vert \nabla_X \varphi \Vert_{L^\infty(\mathbb{R}^2)}^2 \Vert (A+B) f \Vert_{L^2(\mathbb{R}^2)} + \Vert B f \Vert_{L^2(\mathbb{R}^2)}\\
& \leq \frac{21}{64}\Vert A f \Vert_{L^2(\mathbb{R}^2)} + C(\kappa , \Vert \nabla_X^2 \varphi \Vert_{L^\infty (\mathbb{R}^2)})  \Vert f \Vert_{L^2(\mathbb{R}^2)}.
\end{align*}
Since $- A$ generates an analytic contraction semigroup on $L^2 (\mathbb{R}^2)$, it follows from the perturbation theory on the semigroup theory(\cite[Chapter 3]{Paz83}, \cite[Chapter III]{EN00}) to see that $- \mathcal{L}$ generates an analytic $C_0$-semigroup on $L^2(\mathbb{R}^2)$. Thus, we see $(\rm{i})$ and $(\rm{ii})$.

Next, we prove $(\rm{iii})$. Since $D (\mathcal{L})$ is dense in $L^2(\mathbb{R}^2)$, we can define the adjoint operator of $\mathcal{L}$. Let $\mathcal{L}^*$ be the adjoint operator of $\mathcal{L}$. Define the operator $\mathcal{L}'$ on $L^2(\mathbb{R}^2)$ as follows: 
\begin{equation*}
\begin{cases}
{\displaystyle{\mathcal{L}' f = - \kappa \frac{\partial}{\partial X_\alpha} \bigg( \sqrt{G} g^{\alpha \beta} \frac{\partial f }{\partial X_\beta} \bigg),}}\\
D ( \mathcal{L}') = W^{2,2} (\mathbb{R}^2).
\end{cases}
\end{equation*}
By integration by parts, we check that for all $f, \phi \in D (\mathcal{L})( = W^{2,2} (\mathbb{R}^2))$
\begin{equation*}
\left< \mathcal{L} f ,\phi \right>_{L^2(\mathbb{R}^2)} =\left< f, \mathcal{L}' \phi \right>_{L^2(\mathbb{R}^2)} =\left< f, \mathcal{L}^* \phi \right>_{L^2(\mathbb{R}^2)}.
\end{equation*}
This shows that $\mathcal{L}^* = \mathcal{L}'$ on $W^{2,2} (\mathbb{R}^2)$. Since $- \mathcal{L}' = - \mathcal{L}$, we see that $- \mathcal{L}'$ generates an analytic semigroup on $L^2(\mathbb{R}^2)$. Therefore, there is $\lambda_0 >0$ such that $\lambda_0 \in \rho (- \mathcal{L}')$, where $\rho (-\mathcal{L}')$ is the resolvent set of $- \mathcal{L}'$.

Now we show that $D(\mathcal{L}^*) = D (\mathcal{L}')$ and $\mathcal{L}^* = \mathcal{L}'$. To this end, we show that the operator $(\mathcal{L}^* + \lambda_0) $ is injective. Let $\phi_0 \in D (\mathcal{L}^*)$ such that $(\mathcal{L}^* + \lambda_0 ) \phi_0 =0$. By the definition of the adjoint operator, we check that for all $f \in D(\mathcal{L})$
\begin{equation*}
0= \left< f , (\mathcal{L}^* + \lambda_0 ) \phi_0 \right>_{L^2(\mathbb{R}^2)} =\left< (\mathcal{L} + \lambda_0 ) f, \phi_0 \right>_{L^2(\mathbb{R}^2)} .
\end{equation*}
Since $\lambda_0 \in \rho (- \mathcal{L}') = \rho (- \mathcal{L})$, we find that $\phi_0 = 0$. This shows that $(\mathcal{L}^* + \lambda_0 )$ is injective. Since we have already known that $\mathcal{L}' \subset \mathcal{L}^*$, we now prove that $\mathcal{L}^* \subset \mathcal{L}'$. Fix $\phi_1 \in D(\mathcal{L}^*)$. By the definition of the domain $D(\mathcal{L}^*)$, there is $h_0 \in L^2(\mathbb{R}^2)$ such that
\begin{equation*}
(\mathcal{L}^* + \lambda_0) \phi_1 = h_0.
\end{equation*}
Since $h_0 \in L^2(\mathbb{R}^2)$ and $\lambda_0 \in \rho (- \mathcal{L}')$, there is $\phi_2 \in D(\mathcal{L}')(=W^{2,2} (\mathbb{R}^2))$ such that
\begin{equation*}
(\mathcal{L}' + \lambda_0 ) \phi_2 = h_0. 
\end{equation*}
From $\mathcal{L}' = \mathcal{L}^*$ on $W^{2,2} (\mathbb{R}^2)$, we find that
\begin{equation*}
0 = ( \mathcal{L}^* + \lambda_0) \phi_2 - (\mathcal{L}' + \lambda_0) \phi_1 = (\mathcal{L}^* + \lambda_0) (\phi_2 - \phi_1 ).
\end{equation*}
Since $(\mathcal{L}^* + \lambda_0)$ is injective, we see that $\phi_2 = \phi_1$ and $\phi_2 \in D(\mathcal{L}') = W^{2,2} (\mathbb{R}^2)$. Therefore, we conclude that $D (\mathcal{L}^*)\subset D (\mathcal{L}') = W^{2,2} (\mathbb{R}^2)$. Since $\mathcal{L}^* = \mathcal{L}' = \mathcal{L}$ and $D (\mathcal{L}^*) = D(\mathcal{L}') = D(\mathcal{L}) = W^{2,2} (\mathbb{R}^2)$, we see $(\rm{iii})$.

Finally, we prove $(\rm{iv})$. Since $\mathcal{L}$ is a closed operator, we show that $\mathcal{N} (\mathcal{L}^*) = \{ 0 \}$, where $\mathcal{N}(\mathcal{L}^*)$ is the null set of $\mathcal{L}^*$. Let $\phi_* \in D (\mathcal{L})$ such that
\begin{equation*}
0 = \left< \phi_* , \mathcal{L}^* \phi_* \right>_{L^2(\mathbb{R}^2)}.
\end{equation*}
By \eqref{eq35} and \eqref{eq44}, we check that
\begin{align*}
0 = \left< \phi_* , \mathcal{L}^* \phi_* \right>_{L^2(\mathbb{R}^2)} & = \kappa \int_{\mathbb{R}^2} g^{\alpha \beta} \frac{\partial \phi_*}{\partial X_\alpha} \frac{\partial \phi_*}{\partial X_\beta} \sqrt{G} { \ }dX\\
&\geq \frac{\kappa}{\sqrt{3}} \Vert \nabla_X \phi_* \Vert_{L^2(\mathbb{R}^2)}^2 \geq 0.
\end{align*}
This shows that $\phi_* =0$, that is, $\mathcal{N} (\mathcal{L}^*) = \{ 0 \}$. Since $\mathcal{L}$ is a closed operator and $D (\mathcal{L})$ is dense in $L^2(\mathbb{R}^2)$, we see that $\overline{\mathcal{R}(\mathcal{L})} = \mathcal{N}(\mathcal{L}^*)^\bot = L^2(\mathbb{R}^2)$. Thus, we see $(\rm{iv})$. Therefore, Proposition \ref{prop61} is proved.

  \end{proof}

%\newpage

\section{Stability}\label{sect7}

We derive asymptotic $L^2$-stability of solutions to our system by applying both maximal $L^2$-regularity of the Laplace-Beltrami operator $L$ and nice properties of the weighted Laplace-Beltrami operator $\mathcal{L}$. Let $\kappa >0$, $\varphi \in BC^2(\mathbb{R}^2)$, and let $C_\star = C_\star (\kappa ) >0$ be the constant appearing in \eqref{eq32}. Let $L$ and $\mathcal{L}$ be the two operators defined by \eqref{eq25} and \eqref{eq61}, respectively.

\begin{proof}[Proof of Theorem \ref{thm27}]
Assume that
\begin{equation*}
\Vert \nabla_X \varphi \Vert_{L^\infty (\mathbb{R}^2)} \leq \min \left\{ \frac{1}{4} , \frac{1} {4 \sqrt{C_\star}} \right\}.
\end{equation*}
Fix $w_0 \in L^2 (\mathbb{R}^2)$. Set $w (t) = {\rm{e}}^{- t L} w_0 { \ }(t>0)$. Since $- L$ generates an analytic semigroup on $L^2(\mathbb{R}^2)$, we see that $w$ is a unique strong solution to system
\begin{equation*}
\begin{cases}
dw/{dt} + L w = 0 \text{ on }(0,\infty),\\
w\vert_{t =0} = w_0.
\end{cases}
\end{equation*}

Now we prove that
\begin{equation}\label{eq71}
\lim_{t \to \infty} \Vert {\rm{e}}^{ - t L } w_0 \Vert_{L^2( \mathbb{R}^2)} = 0.
\end{equation}
Let $\varepsilon >0$. Since $\sqrt{G} w_0 \in L^2(\mathbb{R}^2)$ and $R (\mathcal{L})$ is dense in $L^2(\mathbb{R}^2)$ from Proposition \ref{prop61}, we take $W_0 \in W^{2,2} (\mathbb{R}^2)$ such that
\begin{equation}\label{eq72}
\Vert \sqrt{G} w_0 - \mathcal{L} W_0 \Vert_{L^2 (\mathbb{R}^2)} < \frac{\varepsilon}{6}.
\end{equation}
Since $\mathcal{L} = \sqrt{G} L$ on $W^{2,2}(\mathbb{R}^2)$, we use \eqref{eq72} and \eqref{eq43} to see that
\begin{align}
\frac{\varepsilon}{6} &> \Vert \sqrt{G} w_0 - \sqrt{G} L W_0 \Vert_{L^2 (\mathbb{R}^2)}\notag\\
& \geq \Vert w_0 - L W_0 \Vert_{L^2 (\mathbb{R}^2)} .\label{eq73}
\end{align}
By \eqref{eq41} and \eqref{eq73}, we check that for $t >0$
\begin{align}
\Vert {\rm{e}}^{ - t L} w_0 \Vert_{L^2(\mathbb{R}^2)} & \leq \Vert {\rm{e}}^{ - t L} (w_0 - L W_0 ) \Vert_{L^2(\mathbb{R}^2)} + \Vert {\rm{e}}^{ - t L} L W_0 \Vert_{L^2(\mathbb{R}^2)}\notag\\
& \leq 3^{1/4} \Vert w_0 - L W_0 \Vert_{L^2(\mathbb{R}^2)} + \Vert {\rm{e}}^{ - t L} L W_0 \Vert_{L^2(\mathbb{R}^2)}\notag\\
& \leq  \varepsilon/2 + \Vert {\rm{e}}^{ - t L} L W_0 \Vert_{L^2(\mathbb{R}^2)}.\label{eq74}
\end{align}
From \eqref{eq41}, we also see that for all $0 \leq \tau <t$,
\begin{equation*}
\Vert {\rm{e}}^{ - t L} L W_0 \Vert_{L^2(\mathbb{R}^2)} \leq 3^{1/4}  \Vert {\rm{e}}^{ - \tau L} L W_0 \Vert_{L^2(\mathbb{R}^2)}.
\end{equation*}
Integrating with respect to $\tau$, and then using the Cauchy-Schwarz inequality, we observe that
\begin{align}
\Vert {\rm{e}}^{ - t L} L W_0 \Vert_{L^2(\mathbb{R}^2)} & \leq \frac{3^{1/4}}{t} \int_0^t \Vert {\rm{e}}^{ - \tau L} L W_0 \Vert_{L^2(\mathbb{R}^2)} { \ }d \tau\notag\\
& \leq \frac{3^{1/4}}{t^{1/2}} \bigg( \int_0^t \Vert {\rm{e}}^{ - \tau L} L W_0 \Vert_{L^2(\mathbb{R}^2)}^2 { \ }d \tau \bigg)^{1/2}\notag\\
& = \frac{3^{1/4}}{t^{1/2}} \bigg( \int_0^t \Vert L {\rm{e}}^{ - \tau L} W_0 \Vert_{L^2(\mathbb{R}^2)}^2 { \ }d \tau \bigg)^{1/2}.\label{eq75}
\end{align}
Now we set $W(t) = {\rm{e}}^{- t L} W_0$. It is clear that $W$ satisfies
\begin{equation*}
\begin{cases}
dW/{dt}  + L W= 0 \text{ on }(0,\infty),\\
W\vert_{t=0} = W_0.
\end{cases}
\end{equation*}
From \eqref{EQ5014} and Lemma \ref{lem24}, we have
\begin{multline}\label{eq76}
\Vert dW/{dt} \Vert_{L^2(0,\infty; L^2(\mathbb{R}^2))}+ \Vert L W \Vert_{L^2(0,\infty; L^2(\mathbb{R}^2))}\\
 \leq C(\kappa, \Vert \nabla_X^2 \varphi \Vert_{L^\infty (\mathbb{R}^2)}) \Vert W_0 \Vert_{W^{1,2}(\mathbb{R}^2)} .
\end{multline}
Since
\begin{equation*}
\Vert L W \Vert_{L^2(0,\infty; L^2(\mathbb{R}^2))} = \bigg( \int_0^\infty \Vert L {\rm{e}}^{- \tau L} W_0 \Vert_{L^2(\mathbb{R}^2)}^2 { \ }d \tau \bigg)^{1/2},
\end{equation*}
we use \eqref{eq73}-\eqref{eq76} to check that
\begin{align*}
\Vert {\rm{e}}^{ - t L} w_0 \Vert_{L^2(\mathbb{R}^2)} & \leq \Vert {\rm{e}}^{ - t L} (w_0 - L W_0 ) \Vert_{L^2(\mathbb{R}^2)} + \Vert {\rm{e}}^{ - t L} L W_0 \Vert_{L^2(\mathbb{R}^2)}\\
& \leq \frac{1}{2} \varepsilon + \frac{C(\kappa , \Vert \nabla_X^2 \varphi \Vert_{L^\infty (\mathbb{R}^2)})}{t^{1/2}}\Vert W_0 \Vert_{W^{1,2}(\mathbb{R}^2)} \\
& < \varepsilon { \ }(t>>1).
\end{align*}
Since $\varepsilon$ is arbitrary, we see \eqref{eq71}. From \eqref{eq43}, we find that
\begin{equation*}
\lim_{t \to \infty} \Vert \sqrt{G} {\rm{e}}^{ - t L } w_0 \Vert_{L^2( \mathbb{R}^2)} = 0.
\end{equation*}
Therefore, Theorem \ref{thm27} is proved.
  \end{proof}

%\newpage

\section{Appendix: Function Spaces on Surfaces and H\"{o}lder Continuity}\label{sect8}

In Appendix, we first characterize the function spaces $L^2(\Gamma \times (0,T) )$ and $L^2(\Gamma \times (0,\infty))$, and then we prepare one tool to derive the H\"{o}lder continuity of the solutions to our system.

\subsection{Function Spaces on Surfaces}\label{subsec81}
Let $\varphi \in BC^2(\mathbb{R}^2)$, and $\mathcal{T} \in (0,\infty]$. Set
\begin{equation*}
[0, \mathcal{T}] = \begin{cases}
[0,\mathcal{T}] \text{ if }\mathcal{T} < \infty,\\
[0,\infty ) \text{ if } \mathcal{T}=\infty.
\end{cases} 
\end{equation*}
Define
\begin{align*}
L^2(\Gamma \times (0,\mathcal{T})) & = \overline{C_0( \Gamma \times [0,\mathcal{T}])}^{\Vert \cdot \Vert_{L^2(\Gamma \times (0,\mathcal{T}))} },\\
C_0 (\Gamma \times [0, \mathcal{T}] ) & = \{\mathcal{F} :\Gamma \times [0,\mathcal{T}] \to \mathbb{R};  \widehat{\mathcal{F}} \in C_0(\mathbb{R}^2 \times [0, \mathcal{T}] ) \}. 
\end{align*}
Here $\widehat{\mathcal{F}}= \widehat{\mathcal{F}}(X,t) := \mathcal{F}(X_1,X_2,\varphi (X_1,X_2),t)$ for almost all $(X ,t) \in \mathbb{R}^2 \times [0, \mathcal{T}]$ and
\begin{equation*}
\Vert \mathcal{F} \Vert_{L^2(\Gamma \times (0,\mathcal{T}))} := \Vert G^{1/4} \widehat{\mathcal{F}} \Vert_{L^2(\mathbb{R}^2 \times (0, \mathcal{T}))}.
\end{equation*}

\begin{lemma}\label{lem81}
Let $f \in L^2(\mathbb{R}^2 \times (0,\mathcal{T}))$. For almost all $(x,t) \in \Gamma \times (0,\mathcal{T})$, we set
\begin{equation*}
\tilde{f} = \tilde{f}(x,t) = f(x_1,x_2,t).
\end{equation*}
Then $\tilde{f} \in L^2(\Gamma \times (0,\mathcal{T}))$.
\end{lemma}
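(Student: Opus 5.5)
The plan is to unwind the definition: $L^2(\Gamma\times(0,\mathcal{T}))$ is the closure of $C_0(\Gamma\times[0,\mathcal{T}])$ in the norm $\Vert \mathcal{F}\Vert_{L^2(\Gamma\times(0,\mathcal{T}))}=\Vert G^{1/4}\widehat{\mathcal{F}}\Vert_{L^2(\mathbb{R}^2\times(0,\mathcal{T}))}$. So I must produce functions $\mathcal{F}_m\in C_0(\Gamma\times[0,\mathcal{T}])$ that are Cauchy in this norm and converge to $\tilde f$. The first observation is that the hat of $\tilde f$ is $f$ itself: $\widehat{\tilde f}(X,t)=\tilde f(X_1,X_2,\varphi(X),t)=f(X_1,X_2,t)$. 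Hence $\Vert \tilde f\Vert_{L^2(\Gamma\times(0,\mathcal{T}))}=\Vert G^{1/4}f\Vert_{L^2(\mathbb{R}^2\times(0,\mathcal{T}))}$. This is finite, and comparable to $\Vert f\Vert_{L^2}$, because $1\le G\le 1+2\Vert\nabla_X\varphi\Vert_{L^\infty}^2$ by Lemma \ref{lem21}, and this upper bound is finite since $\varphi\in BC^2(\mathbb{R}^2)$.

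Next I use density. Since $C_0(\mathbb{R}^2\times[0,\mathcal{T}])$ (and even $C_0^\infty(\mathbb{R}^2\times(0,\mathcal{T}))$) is dense in $L^2(\mathbb{R}^2\times(0,\mathcal{T}))$, I choose $f_m$ in it with $f_m\to f$ in $L^2$; for $\mathcal{T}=\infty$ I use compactly supported functions on $\mathbb{R}^2\times[0,\infty)$. I then define $\mathcal{F}_m(x,t)=f_m(x_1,x_2,t)$ on $\Gamma\times[0,\mathcal{T}]$. Then $\widehat{\mathcal{F}_m}=f_m\in C_0$, so $\mathcal{F}_m\in C_0(\Gamma\times[0,\mathcal{T}])$. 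Moreover
\begin{equation*}
\Vert \mathcal{F}_m-\tilde f\Vert_{L^2(\Gamma\times(0,\mathcal{T}))}=\Vert G^{1/4}(f_m-f)\Vert_{L^2(\mathbb{R}^2\times(0,\mathcal{T}))}\le (1+2\Vert\nabla_X\varphi\Vert_{L^\infty}^2)^{1/4}\Vert f_m-f\Vert_{L^2}\to0 .
\end{equation*}
So $\tilde f$ lies in the closure, which proves the claim.

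The only delicate point is measure-theoretic bookkeeping, and it is the step I expect to be the main obstacle. I need $\tilde f$ to be well defined almost everywhere on $\Gamma\times(0,\mathcal{T})$, meaning with respect to $\mathcal{H}^2\otimes dt$. I also need the closure to be read as a completion whose elements are identified with their hat functions. Both follow because the map $X\mapsto\widehat{x}(X)$ is a bi-Lipschitz homeomorphism onto $\Gamma$. Its Jacobian factor is $\sqrt{G}\in[1,C]$, by the area formula of Lemma \ref{lem22}, so null sets correspond under it. This justifies treating $\tilde f$ and $f$ as the same a.e.\ object.
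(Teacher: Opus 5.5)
Your proposal is correct and follows essentially the same route as the paper: approximate $f$ in $L^2(\mathbb{R}^2\times(0,\mathcal{T}))$ by continuous compactly supported $f_m$, lift them to $\Gamma$ via $\tilde f_m(x,t)=f_m(x_1,x_2,t)$, and use $G^{1/4}\le C(\Vert\nabla_X\varphi\Vert_{L^\infty(\mathbb{R}^2)})$ to get $\Vert \tilde f-\tilde f_m\Vert_{L^2(\Gamma\times(0,\mathcal{T}))}\to 0$. Your extra remarks on the a.e.\ identification via the bi-Lipschitz parametrization are a harmless addition; the paper only notes this in a one-line remark after the statement.
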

\noindent Remark that for almost all $(x,t) \in \Gamma \times (0,\mathcal{T})$ means that for almost all $(x_1,x_2,t) \in \mathbb{R}^2 \times (0,\mathcal{T})$ since $x_3= \varphi(x_1,x_2)$.  
\begin{proof}[Proof of Lemma \ref{lem81}]
Fix $f \in L^2(\mathbb{R}^2 \times (0,\mathcal{T}))$. Since $C_0(\mathbb{R}^2 \times [0,\mathcal{T}])$ is dense in $L^2(\mathbb{R}^2 \times (0,\mathcal{T}))$, there is $\{ f_m \} \subset L^2(\mathbb{R}^2 \times (0,\mathcal{T}))$ such that
\begin{equation}\label{eq81}
\lim_{m \to \infty} \Vert f - f_m \Vert_{L^2(\mathbb{R}^2 \times (0,\mathcal{T}))} =0.
\end{equation}
For every ${ }^t(x_1,x_2,x_3)\in \Gamma$ and $t \in [0,\mathcal{T}]$, we set
\begin{equation*}
\tilde{f}_m = \tilde{f}_m(x_1,x_2,x_3,t) = f_m  (x_1,x_2,t).
\end{equation*}
By the definition of $\Vert \cdot \Vert_{L^2(\Gamma \times (0, \mathcal{T}))}$ and \eqref{eq81}, we check that
\begin{align*}
\Vert \tilde{f} - \tilde{f}_m \Vert_{L^2(\Gamma \times (0,\mathcal{T}))} & = \Vert G^{1/4}(f - f_m) \Vert_{L^2(\mathbb{R}^2 \times (0,\mathcal{T}))}\\
& \leq C (\Vert \nabla_X \varphi \Vert_{L^\infty (\mathbb{R}^2)}) \Vert f - f_m \Vert_{L^2(\mathbb{R}^2 \times (0,\mathcal{T}))} \to 0 { \ }(m \to \infty)
\end{align*}
Therefore, we see that $\tilde{f} \in L^2(\Gamma \times (0,\mathcal{T}))$.
  \end{proof}

\subsection{H\"{o}lder Continuity}\label{subsec82}

\begin{lemma}\label{lem82}
Let $X$ be a Banach space, and $\Vert \cdot \Vert_X$ be its norm. Let $\mathcal{A}$ be a linear operator densely defined in $X$. Let $V_0 \in X$ and $F \in L^p(0,\mathcal{T};X) \cap C_{loc}^\eta ((0,\mathcal{T});X)$ for some $1 \leq p < \infty$, $0< \eta \leq 1$, and $\mathcal{T} \in (0,\infty]$. Assume that $- \mathcal{A}$ generates an analytic semigroup on $X$. Then system
\begin{equation*}
\begin{cases}
dV/{dt} + \mathcal{A} V = F { \ } \text{ on }(0,\mathcal{T}),\\
V\vert_{t = 0} = V_0,
\end{cases}
\end{equation*}
admits a unique strong solution $V$ in
\begin{equation*}
C([0,\mathcal{T}) ; X ) \cap C((0,\mathcal{T}) ; D (\mathcal{A})) \cap C^1((0,\mathcal{T}) ; X).
\end{equation*}
Moreover, $V$ satisfies that $V$ is represented by
\begin{equation*}
V (t) = {\rm{e}}^{-t \mathcal{A}} V_0 + \int_0^t {\rm{e}}^{ - (t- \tau) \mathcal{A} } F (\tau) { \ }d \tau { \ }(0<t<\mathcal{T}),
\end{equation*} 
and that
\begin{equation}\label{eq82}
V , \mathcal{A} V , dV/{dt} \in C_{loc}^{ \eta } ((0,\mathcal{T}); X).
\end{equation}
\end{lemma}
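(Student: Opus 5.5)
The plan is to reduce Lemma \ref{lem82} to the classical theory of inhomogeneous Cauchy problems for analytic semigroups, as in \cite[Chapter 4]{Paz83}, and to localize away from $t=0$. Since $-\mathcal{A}$ generates an analytic semigroup, there are $M,\omega \geq 0$ with $\Vert {\rm{e}}^{-t\mathcal{A}}\Vert \leq M{\rm{e}}^{\omega t}$ and $\Vert \mathcal{A}{\rm{e}}^{-t\mathcal{A}}\Vert \leq M{\rm{e}}^{\omega t}/t$ for $t>0$. All estimates below are on bounded subintervals, so these bounds are uniform there.

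\emph{Uniqueness.} If $V_1,V_2$ are two strong solutions in the stated class, their difference $W$ solves $dW/dt+\mathcal{A}W=0$ with $W(0)=0$. For fixed $t$, the map $s\mapsto {\rm{e}}^{-(t-s)\mathcal{A}}W(s)$ has zero derivative on $(0,t)$ and is continuous on $[0,t]$, so $W\equiv 0$.

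\emph{Existence and the mild formula.} Write $V(t)=V_h(t)+V_F(t)$ with $V_h(t)={\rm{e}}^{-t\mathcal{A}}V_0$.
\begin{itemize}
\item For $V_h$: by analyticity, $V_h\in C([0,\mathcal{T});X)\cap C^\infty((0,\mathcal{T});D(\mathcal{A}))$, and all time derivatives of $V_h$ and $\mathcal{A}V_h$ are bounded on $[\delta,T]$ for every $0<\delta<T<\mathcal{T}$. Hence $V_h$, $\mathcal{A}V_h$ and $dV_h/dt$ are Lipschitz, in particular $C^\eta$, on $[\delta,T]$.
\item For $V_F(t)=\int_0^t{\rm{e}}^{-(t-\tau)\mathcal{A}}F(\tau)\,d\tau$: fix $0<\delta<T<\mathcal{T}$ and split at $\delta/2$,
\[
V_F(t)={\rm{e}}^{-(t-\delta/2)\mathcal{A}}V_F(\delta/2)+\int_{\delta/2}^t{\rm{e}}^{-(t-\tau)\mathcal{A}}F(\tau)\,d\tau ,\qquad t\geq \delta/2 .
\]
The first term is a homogeneous solution started at $t=\delta/2$, so it is smooth on $[\delta,T]$ as above. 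Here $V_F(\delta/2)\in X$ because $F\in L^p\subset L^1$ on bounded intervals. The second term has data $F\in C^\eta([\delta/2,T];X)$, a genuinely H\"older function on a compact interval.
\end{itemize}

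\emph{Regularity of the second term.} I would apply Pazy's Theorem 4.3.2 and Theorem 4.3.5: if $f\in C^\eta([a,T];X)$ and $u(t)=\int_a^t{\rm{e}}^{-(t-\tau)\mathcal{A}}f(\tau)\,d\tau$, then $u(t)\in D(\mathcal{A})$ for $t\in(a,T]$ and $\mathcal{A}u\in C^\eta([a+\epsilon,T];X)$ for every $\epsilon>0$. If Pazy's statement gives a slightly weaker exponent, I would instead reprove it directly from the decomposition
\[
\mathcal{A}u(t)=\int_a^t\mathcal{A}{\rm{e}}^{-(t-\tau)\mathcal{A}}\bigl(f(\tau)-f(t)\bigr)\,d\tau+\bigl(I-{\rm{e}}^{-(t-a)\mathcal{A}}\bigr)f(t).
\]
The first integral converges because $\Vert \mathcal{A}{\rm{e}}^{-s\mathcal{A}}\Vert\, s^\eta$ is integrable near $0$. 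The H\"older estimates for both pieces follow from the standard bounds $\Vert \mathcal{A}^2{\rm{e}}^{-s\mathcal{A}}\Vert\leq C/s^2$ and $\Vert ({\rm{e}}^{-h\mathcal{A}}-I)\mathcal{A}^{-\theta}\Vert\leq Ch^\theta$. Then $du/dt=f-\mathcal{A}u$ is also $C^\eta$, and $u$ itself is $C^1$, hence $C^\eta$.

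Combining the pieces gives $V\in C((0,\mathcal{T});D(\mathcal{A}))\cap C^1((0,\mathcal{T});X)$, the equation holds, and \eqref{eq82} holds on each $[\delta,T]$. Continuity at $t=0$ follows from $\Vert V_F(t)\Vert\leq M{\rm{e}}^{\omega t}\int_0^t\Vert F\Vert\,d\tau\to 0$ and the strong continuity of the semigroup.

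\emph{Main obstacle.} The key difficulty is the H\"older estimate for $\mathcal{A}u$ with exactly the exponent $\eta$. This is where the singular kernel $\mathcal{A}{\rm{e}}^{-s\mathcal{A}}$ must be controlled against $|f(\tau)-f(t)|\leq C|t-\tau|^\eta$. The case $\eta=1$ needs care, since the integral $\int s^{-1}\,ds$ may produce a logarithm. In that case I would either appeal to Pazy's result at that endpoint or accept any exponent below $1$, since only $0<\eta<1$ is used elsewhere in the paper.
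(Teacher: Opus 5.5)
Your argument is correct for $0<\eta<1$, but it is organized differently from the paper's. The paper does not localize. It replaces $\mathcal{A}$ by $\tilde{\mathcal{A}}=\mathcal{A}+\lambda_0$, so that fractional powers exist even if ${\rm{e}}^{-t\mathcal{A}}$ grows or $\mathcal{A}$ is not invertible. It then splits $\tilde{\mathcal{A}}V(t)-\tilde{\mathcal{A}}V(s)$ into seven terms $H_1,\dots,H_7$, keeping all Duhamel integrals over $(0,s)$. You instead cut the Duhamel integral at $\delta/2$. The contribution of $F$ on $(0,\delta/2)$ then becomes the free orbit ${\rm{e}}^{-(t-\delta/2)\mathcal{A}}V_F(\delta/2)$, which is smooth for $t\geq\delta$. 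The rest has genuinely H\"older data on a compact interval, where the textbook two-term decomposition applies. This buys something real. In the paper's bound for $H_3$, the estimate $\Vert F(\tau)-F(s)\Vert_X\leq C(s-\tau)^\eta$ is used for every $\tau\in(0,s)$, although $F$ is only locally H\"older on $(0,\mathcal{T})$ and merely $L^p$ near $0$. Your localization needs nothing from $F$ near $t=0$ beyond integrability. One point to tidy: the bound $\Vert({\rm{e}}^{-h\mathcal{A}}-I)\mathcal{A}^{-\theta}\Vert\leq Ch^\theta$ presupposes that $\mathcal{A}$ is invertible with a bounded semigroup. Either shift to $\mathcal{A}+\lambda_0$ as the paper does, or use $\Vert({\rm{e}}^{-h\mathcal{A}}-I){\rm{e}}^{-r\mathcal{A}}\Vert\leq Ch/r$, which suffices on $[a+\epsilon,T]$ and needs no fractional powers.

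On $\eta=1$ your caution is justified, and it applies to the paper as well. Its $H_3$ bound reduces to $\int_0^s (t-s)(t-\tau)^{-1}(s-\tau)^{\eta-1}\,d\tau$, which is $O((t-s)^\eta)$ only for $\eta<1$; at $\eta=1$ it equals $(t-s)\log\frac{t}{t-s}$. The endpoint is in fact false, so appealing to Pazy (or any other reference) cannot rescue it. Take $X=\ell^2$, $\mathcal{A}={\rm{diag}}(2^k)$, $V_0=0$, and $F(t)=\int_0^t g(\tau)\,d\tau$, where the $k$-th component of $g$ $(k\geq 1)$ is the indicator of $[1-2^{-k},1-2^{-k-1})$. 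These supports are disjoint, so $F$ is Lipschitz. Yet the $k$-th component of $\frac{d}{dt}\mathcal{A}V(1+h)$ is $({\rm{e}}^{-1/2}-{\rm{e}}^{-1}){\rm{e}}^{-2^k h}$, whose $\ell^2$-norm grows like $\sqrt{\log(1/h)}$ as $h\to 0^+$. So $\mathcal{A}V$ is not Lipschitz near $t=1$. Restricting to $0<\eta<1$, which is all the paper uses, is the right fix.
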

Using the same arguments as in \cite[Section 4.3]{Paz83} and \cite[Section 4.3]{Lun95}, we can obtain Lemma \ref{lem82}. For the readers, we give a sketch of the proof.
\begin{proof}[Proof of Lemma \ref{lem82}]
We only prove \eqref{eq82}. Since $- \mathcal{A}$ generates an analytic semigroup on $X$, we see that there is $\lambda_0 \in \rho (- \mathcal{A})$ such that $\lambda_0 >0$, where $\rho (- \mathcal{A})$ is the resolvent set of $-\mathcal{A}$.
  We also see that there is $C = C(\lambda_0) >0$ such that for all $f \in D (\mathcal{A})$, $f_* \in X$, and $t >0$
\begin{align}
\Vert f \Vert_{X} + \Vert \mathcal{A} f \Vert_X & \leq C \Vert (\mathcal{A} + \lambda_0 ) f \Vert_X ,\label{eq83}\\
\Vert {\rm{e}}^{- t \mathcal{A}} f_* \Vert_{X} & \leq C {\rm{e}}^{t \lambda_0} \Vert f_* \Vert_X .\label{eq84}
\end{align}
Therefore, we can define the fractional powers of $(\mathcal{A} + \lambda_0)$.

Since $- ( \mathcal{A} + \lambda_0)$ generates a bounded analytic semigroup on $X$, it follows from \cite[Theorem 6.13 in Chapter II]{Paz83} to find that for each $0<q \leq 1$ and $\mathcal{T}_*>0$ there is $C (\mathcal{T}_*) = C (q,\mathcal{T}_*, \lambda_0) >0$ such that for all $f_1 \in X$, $f_2 \in D ((\mathcal{A} + \lambda_0)^q)$, and $0 < t \leq \mathcal{T}_*$,
\begin{align}
\Vert (\mathcal{A} + \lambda_0 )^q {\rm{e}}^{- t (\mathcal{A}+ \lambda_0) } f_1 \Vert_{X} & \leq C(\mathcal{T}_*) t^{-q} \Vert f_1 \Vert_{X},\label{eq85}\\
\Vert ({\rm{e}}^{- t (\mathcal{A} + \lambda_0)} -  1) f_2 \Vert_{X} & \leq C(\mathcal{T}_*)  t^q \Vert (\mathcal{A} + \lambda_0)^q f_2 \Vert_{X}.\label{eq86}
\end{align}
Let $\mathcal{T}_*>0$ and $0 < q \leq 1$. By \eqref{eq84} and \eqref{eq85}, we check that for each $f_1 \in X$ and $0<t \leq \mathcal{T}_*$
\begin{align*}
\Vert (\mathcal{A} + \lambda_0 )^q {\rm{e}}^{- t \mathcal{A} } f_1 \Vert_{X} & = \Vert (\mathcal{A} + \lambda_0 )^q {\rm{e}}^{t\lambda_0}{\rm{e}}^{- t (\mathcal{A} + \lambda_0) } f_1 \Vert_{X}\\
 & \leq C( q, \mathcal{T}_*, \lambda_0 ) t^{-q} \Vert f_1 \Vert_{X} .
\end{align*}
From \eqref{eq83}, we also find that
\begin{align*}
\Vert \mathcal{A} {\rm{e}}^{- t \mathcal{A} } f_1 \Vert_{X} & = C \Vert (\mathcal{A} + \lambda_0) {\rm{e}}^{- t \mathcal{A} } f_1 \Vert_{X}\\
 & \leq C( q, \mathcal{T}_*, \lambda_0 ) t^{-1} \Vert f_1 \Vert_{X}.
\end{align*}
Using \eqref{eq84}, \eqref{eq86}, and the Taylor expansion of ${\rm{e}}^{- \lambda_0 t}$, we check that for each $f_2 \in D ( ( \mathcal{A} + \lambda_0)^q )$ and $0<t \leq \mathcal{T}_*$
\begin{align*}
\Vert ({\rm{e}}^{- t \mathcal{A}} -  1) f_2 \Vert_{X} & \leq \Vert ({\rm{e}}^{- t \mathcal{A}} -  {\rm{e}}^{- t( \mathcal{A} + \lambda_0)} ) f_2 \Vert_{X} + \Vert ({\rm{e}}^{- t (\mathcal{A} + \lambda_0) } -  1) f_2 \Vert_{X}\\
 & \leq C (\mathcal{T}_*) \Vert (1 -  {\rm{e}}^{- t \lambda_0} ) f_2 \Vert_{X} + C(\mathcal{T}_*)  t^q \Vert (\mathcal{A} + \lambda_0)^q f_2 \Vert_{X}\\
& \leq C(q, \mathcal{T}_*, \lambda_0) t^q \Vert (\mathcal{A} + \lambda_0)^q f_2 \Vert_{X}.
\end{align*}
As a result, we see that for each $\mathcal{T}_* >0$ and $0 < q \leq 1$ there is $ C(\mathcal{T}_*)  = C (q,\mathcal{T}_*, \lambda_0) >0$ such that for all $f_1 \in X$, $f_2 \in D ( (\mathcal{A} + \lambda_0)^q)$, and $0 < t \leq \mathcal{T}_*$ 
\begin{align}
\Vert (\mathcal{A} + \lambda_0 )^q {\rm{e}}^{- t \mathcal{A}} f_1 \Vert_{X} & \leq C(\mathcal{T}_*)  t^{-q} \Vert f_1 \Vert_{X},\label{eq87}\\
\Vert \mathcal{A} {\rm{e}}^{- t \mathcal{A}} f_1 \Vert_{X} & \leq C (\mathcal{T}_*) t^{-1} \Vert f_1 \Vert_{X},  \label{eq88}\\
\Vert ({\rm{e}}^{- t \mathcal{A}} -  1) f_2 \Vert_{X} & \leq  C(\mathcal{T}_*) t^q \Vert (\mathcal{A} + \lambda_0)^q f_2 \Vert_{X}.\label{eq89}
\end{align}

Now we derive \eqref{eq82}. Fix $V_0 \in X$ and $F \in L^p(0,\mathcal{T};X) \cap C^{\eta}_{loc}((0,\mathcal{T}); X)$ for some $1 \leq p <\infty$, $0< \eta \leq 1$, and $\mathcal{T} \in (0,\infty]$. Fix $\varepsilon, \mathcal{T}_* >0$ such that $\varepsilon <\mathcal{T}_* < \mathcal{T}$. By \eqref{eq84} and the H\"{o}lder inequality, we see that for $t \leq \mathcal{T}_*$
\begin{align*}
\Vert V (t) \Vert_X & \leq \Vert {\rm{e}}^{- t \mathcal{A}} V_0 \Vert_X + \int_0^t \Vert {\rm{e}}^{- (t-\tau) \mathcal{A}} F (\tau )\Vert_X { \ }d\tau\\
& \leq C(\lambda_0, \mathcal{T}_*) \Vert V_0 \Vert_X + C (\lambda_0, \mathcal{T}_*,p) \Vert F \Vert_{L^p(0,\mathcal{T};X)}< + \infty.
\end{align*}
Let $s,t >0$ such that $\varepsilon \leq s \leq t \leq \mathcal{T}_*$. Since
\begin{align*}
& V (t) = {\rm{e}}^{- t \mathcal{A}} V_0 + \int_0^{t} {\rm{e}}^{- ( t - \tau ) \mathcal{A} } F (\tau ) { \ }d \tau,\\
& V (s) = {\rm{e}}^{- s \mathcal{A}} V_0 + \int_0^{s} {\rm{e}}^{- ( s - \tau ) \mathcal{A} } F (\tau ) { \ }d \tau,
\end{align*}
we have
\begin{multline*}
V (t) - V (s) = \{ {\rm{e}}^{- ( t - s ) \mathcal{A}} - 1 \} {\rm{e}}^{- s \mathcal{A}}V_0\\
+ \int_{s}^{t} {\rm{e}}^{- ( t - \tau ) \mathcal{A}} F ( \tau ) { \ }d \tau + \int_0^{s} \{ {\rm{e}}^{- ( t - s ) \mathcal{A}} - 1 \}{\rm{e}}^{-( s - \tau ) \mathcal{A}} F (\tau ) { \ } d \tau.
\end{multline*}
Write $\tilde{\mathcal{A}} = \mathcal{A} + \lambda_0$. It is easy to check that
\begin{equation*}
\tilde{\mathcal{A}} V (t) - \tilde{\mathcal{A}} V (s) = \sum_{m=1}^7 H_m (t,s).
\end{equation*}
Here
\begin{align*}
H_1 = H_1 (s,t) &:= ( {\rm{e}}^{- (t -s ) \mathcal{A} } - 1 ) \tilde{\mathcal{A}} {\rm{e}}^{- s \mathcal{A}} V_0,\\
H_2 = H_2 (s , t ) &:= \tilde{\mathcal{A}} \int_{s}^{t} {\rm{e}}^{- (t - \tau ) \mathcal{A}}\{ F (\tau ) - F (t) \} { \ }d \tau,\\
H_3 = H_3 (s , t ) &:= \tilde{\mathcal{A}} \int_0^{s} \{ {\rm{e}}^{- ( t - s ) \mathcal{A}} - 1 \} {\rm{e}}^{- ( s - \tau ) \mathcal{A}} \{ F (\tau ) - F (s) \} { \ } d \tau ,\\
H_4 = H_4 (s , t) &:= \mathcal{A} \int_{s}^{t} {\rm{e}}^{- (t - \tau ) \mathcal{A}} F (t) { \ }d \tau,\\
H_5 = H_5 (s , t) &:= \mathcal{A} \int_0^{s} {\rm{e}}^{- (s - \tau ) \mathcal{A}} \{ {\rm{e}}^{- (t - s ) \mathcal{A}} - 1  \} F (s) { \ }d \tau,\\
H_6 = H_6 (s , t) &:= \lambda_0 \int_{s}^{t} {\rm{e}}^{- (t - \tau ) \mathcal{A}} F (t) { \ }d \tau,\\
H_7 = H_7 (s , t) &:= \lambda_0 \int_0^{s} \{ {\rm{e}}^{- (t - s ) \mathcal{A}} - 1  \} {\rm{e}}^{- (s - \tau ) \mathcal{A}} F (s) { \ }d \tau.
\end{align*}
We first study $\Vert H_1 (s,t) \Vert_X$ and $\Vert H_2(s,t) \Vert_X$. By \eqref{eq89}, we see that
\begin{align*}
\Vert H_1 \Vert_{X} & \leq C(\mathcal{T}_*) (t - s )^\eta \Vert \tilde{\mathcal{A}}^{1 + \eta} {\rm{e}}^{- s \mathcal{A}} V_0 \Vert_{X}\\
& \leq \frac{  C(\mathcal{T}_*)(t - s)^{\eta}}{\varepsilon^{1+\eta}} \Vert V_0 \Vert_{X}.
\end{align*}
Since $F \in C^{\eta}_{loc}((0,\mathcal{T}); X)$, we apply \eqref{eq87} to check that
\begin{equation*}
\Vert H_2 \Vert_{X} \leq \int_{s}^{t}  C(\mathcal{T}_*) \frac{(t - \tau )^\eta}{(t - \tau )} { \ }d \tau \leq C(\mathcal{T}_*) (t - s)^{\eta}.
\end{equation*}
Next, we consider $\Vert H_3(s,t) \Vert_X$. Since
\begin{multline*}
\tilde{\mathcal{A}} {\rm{e}}^{- (t-\tau ) \mathcal{A}} - \tilde{\mathcal{A}} {\rm{e}}^{ -(s -\tau ) \mathcal{A}}\\
 = \mathcal{A} {\rm{e}}^{- (t-\tau ) \mathcal{A}} - \mathcal{A} {\rm{e}}^{ -(s -\tau ) \mathcal{A}} + \lambda_0 {\rm{e}}^{- (t-\tau ) \mathcal{A}} - \lambda_0 {\rm{e}}^{ -(s -\tau ) \mathcal{A}}\\
  = \int_s^t \frac{d}{d \ell} \mathcal{A} {\rm{e}}^{- (\ell - \tau) \mathcal{A}} { \ }d \ell + \lambda_0 ( {\rm{e}}^{- ( t - s)\mathcal{A}}  - 1 ) {\rm{e}}^{- (s - \tau ) \mathcal{A}}\\
 = \int_s^t \mathcal{A}^2 {\rm{e}}^{- (\ell - \tau) \mathcal{A}} { \ }d \ell + \lambda_0 ( {\rm{e}}^{- ( t - s)\mathcal{A}}  - 1 ) {\rm{e}}^{- (s - \tau ) \mathcal{A}},
\end{multline*}
we use \eqref{eq88} and \eqref{eq89} to find that for all $f_* \in X$
\begin{multline}\label{eq8010}
\Vert \tilde{\mathcal{A}} {\rm{e}}^{- (t-\tau ) \mathcal{A}} f_* - \tilde{\mathcal{A}} {\rm{e}}^{ -(s -\tau ) \mathcal{A}} f_* \Vert_X\\
 \leq \int_s^t \Vert \mathcal{A}^2 {\rm{e}}^{- (\ell - \tau) \mathcal{A}} f_* \Vert_X{ \ }d \ell  + \lambda_0 \Vert ( {\rm{e}}^{- ( t - s)\mathcal{A}}  - 1 ) {\rm{e}}^{- (s - \tau ) \mathcal{A}} f_* \Vert_X\\
 \leq \int_s^t \frac{C(\mathcal{T}_*)}{(\ell - \tau)^2} { \ }d \ell \Vert f_* \Vert_X +  C (\mathcal{T}_*) (t-s) \Vert \tilde{A} {\rm{e}}^{ - (s- \tau ) \mathcal{A}} f_*  \Vert_X\\
 \leq \frac{ C(\mathcal{T}_*)(t-s) }{( t - \tau ) (s- \tau)} \Vert f_* \Vert_X + \frac{C(\mathcal{T}_*) (t-s) }{(s - \tau )} \Vert f_* \Vert_X.
\end{multline}
Applying \eqref{eq8010}, we see that
\begin{multline*}
\Vert H_3(s,t) \Vert_X \leq \int_0^s \Vert ( \tilde{\mathcal{A}} {\rm{e}}^{- (t-\tau ) \mathcal{A}} - \tilde{\mathcal{A}} {\rm{e}}^{ -(s -\tau ) \mathcal{A}}) \{ F (\tau) - F (s) \} \Vert_X { \ }d\tau\\
\leq \int_0^s \frac{ C(\mathcal{T}_*)(t-s) }{( t - \tau ) (s- \tau)} ( s - \tau)^\eta { \ }d \tau + \int_0^s \frac{C(\mathcal{T}_*) (t-s) }{(s - \tau )} ( s - \tau)^\eta { \ }d\tau\\
\leq  C (\mathcal{T}_*) (t - \tau )^\eta  + C(\mathcal{T}_*) (t- s)^\eta \leq C(\mathcal{T}_*) (t-s)^\eta.
\end{multline*}
Thirdly, we calculate $\Vert H_4(s,t) + H_5(s,t) \Vert_X$. From
\begin{equation*}
\frac{d}{d \tau } {\rm{e}}^{- \tau \mathcal{A}} \phi = - \mathcal{A} {\rm{e}}^{- \tau \mathcal{A}} \phi { \ \ \ }( \phi \in X),
\end{equation*}
we find that
\begin{equation*}
H_4 + H_5 = F (t) - F (s) - {\rm{e}}^{- (t - s) \mathcal{A}} \{ F(t) - F (s) \} - ({\rm{e}}^{- (t - s) \mathcal{A}} - 1){\rm{e}}^{- s \mathcal{A}} F (s).
\end{equation*}
By \eqref{eq85}, \eqref{eq87}, and \eqref{eq89}, we see that
\begin{align*}
\Vert H_4 + H_5 \Vert_{X} & \leq  C(\mathcal{T}_*) (t - s)^{\eta} +  C(\mathcal{T}_*) (t - s)^{\eta} \Vert \tilde{\mathcal{A}}^{\eta} {\rm{e}}^{- s \mathcal{A}} F (s) \Vert_{X}\\
& \leq   C(\mathcal{T}_*) (t - s)^{\eta} + \frac{ C(\mathcal{T}_*) (t - s)^{\eta}}{ \varepsilon^\eta} \sup_{\varepsilon \leq s \leq \mathcal{T}_*} \Vert F ( s ) \Vert_{X}.   
\end{align*}
Finally, we consider $\Vert H_6(s,t) \Vert_X$ and $\Vert H_7(s,t) \Vert_X$. From
From \eqref{eq84}, we check that
\begin{equation*}
\Vert H_6 (s,t) \Vert_X \leq C(\mathcal{T}_*) \sup_{\varepsilon \leq t \leq \mathcal{T}_*} \{ \Vert F (t) \Vert_X \} (t-s). 
\end{equation*}
Using \eqref{eq89} and \eqref{eq87}, we observe that
\begin{align*}
\Vert H_7 (s,t) \Vert_X & \leq  C(\mathcal{T}_*) \int_0^s (t-s)^{\eta} \Vert \tilde{\mathcal{A}}^{\eta} {\rm{e}}^{-(s- \tau)\mathcal{A}} F (s) \Vert_X { \ }d\tau \\
& \leq  C(\mathcal{T}_*) (t-s)^{\eta}  \int_0^s \frac{\Vert F (s) \Vert_X }{ (s - \tau )^{\eta} } d \tau\\
& \leq  C(\mathcal{T}_*) \mathcal{T}_*^{1-\eta} \sup_{\varepsilon \leq s \leq \mathcal{T}_* }\{ \Vert F (s) \Vert_X \} (t-s)^{\eta}.
\end{align*}
As a result, we have
\begin{equation*}
\Vert \tilde{\mathcal{A}} V (t) - \tilde{\mathcal{A}} V (s) \Vert_{X} \leq C(\lambda_0, \eta, \varepsilon , \mathcal{T}_*,\sup_{\varepsilon \leq \tau \leq \mathcal{T}_* }\{ \Vert F (\tau) \Vert_X \} ) (t - s )^{ \eta }. 
\end{equation*}
Therefore, we see that for each fixed $\varepsilon , \mathcal{T}_* >0$ such that $0< \varepsilon < \mathcal{T}_* <\mathcal{T}$
\begin{equation*}
\tilde{\mathcal{A}} V \in C^{ \eta }([\varepsilon , \mathcal{T}_*]; X).
\end{equation*}
This shows that
\begin{equation*}
\tilde{\mathcal{A}} V \in C_{loc}^{ \eta }( (0, \mathcal{T}); X).
\end{equation*}
From \eqref{eq83}, we find that
\begin{equation*}
V, \mathcal{A} V \in C_{loc}^{\eta }( (0, \mathcal{T}); X).
\end{equation*}
Since $F \in C_{loc}^{\eta} ((0,\mathcal{T});X)$ and $dV/{dt} = F - \mathcal{A} V$, we see that
\begin{equation*}
dV/{dt} \in C_{loc}^{ \eta }( (0, \mathcal{T}); X).
\end{equation*}
Therefore, we see \eqref{eq82}.
  \end{proof}

\end{document}